\newcommand{\bq}{\begin{equation}}
\newcommand{\eq}{\end{equation}}
\newcommand{\bqa}{\begin{eqnarray*}}
\newcommand{\eqa}{\end{eqnarray*}}
\theoremstyle{plain}
\newtheorem{theorem}{Theorem}[section]
\newtheorem{Ntheorem}{Theorem}
\newtheorem{proposition}[theorem]{Proposition}
\newtheorem{lemma}[theorem]{Lemma}
\newtheorem{corollary}[theorem]{Corollary}
\theoremstyle{definition}
\crefname{lemma}{Lemma}{Lemmas}
\crefname{proposition}{Proposition}{Propositions}
\theoremstyle{remark}
\newtheorem{remark}[theorem]{Remark} 
\DeclareSymbolFont{pletters}{OT1}{cmr}{m}{sl}
\DeclareMathSymbol{s}{\mathalpha}{pletters}{`s}
\def\L1{\mathcal{L}^{(1)}}
\def\L2{\mathcal{L}^{(2)}}
\def\L3{\mathcal{L}^{(3)}}
\numberwithin{equation}{section}
\renewcommand{\geq}{\geqslant}
\renewcommand{\leq}{\leqslant} 
\renewcommand{\@@and}{\&}
\title[Interpolation on M{\"u}ntz Spaces]{Marcinkiewicz Type Theorems for Interpolation of Operators Acting on M{\"u}ntz Spaces}
\date{}
\author{Micka\"el Latocca}
\address{
Laboratoire de Mathématiques et de Modélisation d'\'Evry (LaMME)\\
Université d'\'Evry\\
23 Bd François Mitterrand, 91000 Évry-Courcouronnes, France
}
\email[M. Latocca]{mickael.latocca@univ-evry.fr}
\author{Vincent Munnier}
\address{Lycée Jacques Prévert 163 rue de Billancourt – 92100 Boulogne-Billancourt, France
}
\email[V. Munnier]{munniervincent@hotmail.fr}
\begin{document}

\begin{abstract} We prove interpolation results in the spirit of the Marcinkiewicz theorem. The operators considered in this article are defined on M{\"u}ntz spaces, which are not dense subspaces of $L^p$, and for which the classical interpolation theory cannot be applied directly. Our proofs crucially rely on strong decoupling of $L^p$ norms, a that was first observed by Gurariy--Macaev and later generalized.
\end{abstract}

\maketitle

\section{Introduction}

In this article we fix an increasing sequence of positive real numbers $\Lambda = \{\lambda_{k}\}_{k\geq 0}$ such that $\sum_{k\geq 0} \frac{1}{\lambda_k} < \infty$. In view of the Müntz--Sasz theorem \cite[p. 172]{BE}, the space $M_{\Lambda}$ defined by 
\[
    M_{\Lambda} = \left\{ f : [0,1] \to \mathbb{R}: f(t)=\sum_{k=0}^Ka_kt^{\lambda_k}, a_k \in \mathbb{C}, K\geqslant 0\right\},
\]
is \textit{not} dense in $C^0([0,1])$. We refer to \cite{GL} for a precise account of important properties of these spaces. We impose further that the sequence $\Lambda$ satisfies the following condition which we refer to as $\Lambda$ being \textit{quasi-lacunary}: there exists $N\geq 1$ and $q>1$ such that $\Lambda$ is the disjoint union of the $E_k=\{\lambda_{n_k+1}, \dots, \lambda_{n_{k+1}}\}$, that $\# E_k \leq N$ and $\frac{\lambda_{n_{k+1}}}{\lambda_{n_k}} > q$ for $k$ large enough. We recover the case of a lacunary sequence when $N=1$. Morover we say that $\Lambda$ is \textit{quasi-lacunary and subgeometric} if in addition for some $q'>1$ there holds $\frac{\lambda_{n_{k+1}}}{\lambda_{n_k}} \leq q'$ for all $k$ large enough. 
In the following, we will use the notation $F_{k}=\operatorname{Span}\{t^{\lambda}, \lambda \in E_k \}$. 

If $\mu$ is a positive Borel measure, we write $L_{\Lambda}^r(\mathrm{d}\mu)$ for the closure of $M_{\Lambda}$ under the usual $L^r(\mathrm{d}\mu)$ norm. When $\mu$ is the Lebesgue measure we simply write $L_{\Lambda}^r$. We insist on the fact that $L^r_{\Lambda}(\mathrm{d}\mu) \varsubsetneq L^r(\mathrm{d}\mu)$. We also denote by $L^{p}_{\Lambda,\alpha}$ the closure of $M_{\Lambda}$ under the $L^p(\mathrm{d}\nu_{\alpha - 1})$ norm, where 
$\mathrm{d}\nu_{\alpha - 1} := (1-x)^{\alpha - 1}\mathrm{d}x$ on $[0,1]$. In particular $M_{\Lambda,1}^p=M_{\Lambda}^p$.

The main focus of this article is to investigate how one can adapt real-interpolation methods in the context of Müntz spaces for which the usual interpolation theory does not apply as we are working on non dense subspaces. Our work is in line with those studying the action of operators in M{\"u}ntz spaces \cite{AGHL,AL,BW09,L18,A09}. We will illustrate how these interpolation results can be used to derive generalizations of embedding results previously obtained in \cite{LM24,GaLe,NT,CFT}. These are embeddings results of the form $L_{\Lambda,\alpha}^{p/\beta} \hookrightarrow L_{\Lambda}^p(\mathrm{d}\mu)$ where $\mu$ satisfies some geometric conditions. 

\subsection{Main results: interpolation of Müntz spaces}

Let $T$ be a sublinear operator acting on $M_{\Lambda}$, that is satisfying $T(cf)=|c||Tf|$ for $c\in\mathbb{C}$, $|T(f+g)| \leq |Tf| + |Tg|$ and $|Tf - Tg| \leq |T(f-g)|$ for all $f, g \in M_{\Lambda}$.
In the case $T=\operatorname{id}$ a condition for the boundedness of $\operatorname{id} : M_{\Lambda}^{\frac{r}{\beta}} \longrightarrow L^r(\mathrm{d}\mu)$ has been obtained in \cite{CFT,NT,GaLe,LM24}; we now want to investigate how such continuity of more general operators $T$ can be derived from weak continuity of the operator $T : M_{\Lambda}^{\frac{p}{\beta}} \longrightarrow L^p(\mathrm{d}\mu)$ and $T : M_{\Lambda}^{\frac{q}{\beta}} \longrightarrow L^q(\mathrm{d}\mu)$ for some $p<r<q$, which is the classical problem of interpolation. 

Given $\alpha, \beta >0$ and a positive Borel measure $\mu$ on $[0,1]$, we say that an operator $T : M_{\Lambda} \to$ is: 
\begin{itemize}
    \item of strong type $r$ if there is a constant $C_r>0$ such that for all $f\in M_{\Lambda}$ it holds 
    \begin{equation}
        \label{eq.strong-cont}
        \|T(f)\|_{L^r(\mathrm{d}\mu)} \leq C_r \|f\|_{L^{\frac{r}{\beta}}(\mathrm{d}\nu_{\alpha - 1})}
    \end{equation}
    i.e. $T : M_{\Lambda}^{\frac{r}{\beta}}(\mathrm{d}\nu_{\alpha-1}) \longrightarrow L^r(\mathrm{d\mu})$ is continuous;
    \item of weak type $r$ if there is a constant $C_r>0$ such that for all $f\in M_{\Lambda}$ it holds 
    \begin{equation}
        \label{eq.weak-cont}
        \mu\left(|Tf|>\lambda\right) \leq C_r^r \lambda^{-r}\|f\|^r_{L^{\frac{r}{\beta}}(\mathrm{d}\nu_{\alpha - 1})};
    \end{equation}
    \item of restricted strong type $r$ if for all $k$, there is a constant $C_r(k)>0$ such that for all $f_k\in F_k$ the estimate \eqref{eq.strong-cont} holds;
    \item of restricted weak type $r$ if for all $k$, there is a constant $C_r(k)>0$ such that for all $f_k\in F_k$ the estimate \eqref{eq.weak-cont} holds.
\end{itemize} 
We recall that strong type $r$ bounds imply weak type $r$ bounds thanks to Markov's inequality.

When $\beta \geqslant 1$, we prove the following interpolation theorem, which statement is akin to that of Marcinkiewicz. 

\begin{Ntheorem}\label{thm.main-marcinkiewicz}
Let $\alpha>0$, $\beta\geq 1$, $\mu$ be a positive Borel measure on $[0,1]$ and $\Lambda$ be a quasi-lacunary sequence. 
Let $(p,q)$ such that $\beta\leq p < q <+\infty$, $r \in (p,q)$ and $\theta\in (0,1)$ such that $\frac{1}{r} = \frac{1-\theta}{p} + \frac{\theta}{q}$. 
Let $T$ be a sublinear operator acting on $M_{\Lambda}$. Assume that $T$ is of restricted weak type $p$ and $q$ with $C_p(k)\leqslant C_p< \infty$ and $C_q(k)\leqslant C_q <\infty$. Then: 
\begin{enumerate}[(i)]
    \item $T$ is of strong type $r$, and $C_r \leqslant C_p^{1-\theta}C_q^{\theta}$.
    \item If moreover $C_p(k) \to 0$ or $C_q(k)\to 0$ as $k\to \infty$ then $T$ is a compact operator.
\end{enumerate}
\end{Ntheorem}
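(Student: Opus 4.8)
The plan is to use the Gurariy--Macaev type decoupling to transfer the problem to the finite-dimensional blocks $F_k$, to run a Marcinkiewicz-type argument on each block with constants \emph{uniform in $k$}, and then to reassemble. Recall (or establish from the preliminary section) the \emph{strong decoupling} for quasi-lacunary $\Lambda$: for every relevant exponent $s$ (in particular $s\in\{p/\beta,q/\beta,r/\beta\}$) and every $g=\sum_k g_k\in M_\Lambda$ with $g_k\in F_k$,
\[
\|g\|_{L^s(\mathrm{d}\nu_{\alpha-1})}^s \;\asymp\; \sum_k \|g_k\|_{L^s(\mathrm{d}\nu_{\alpha-1})}^s ,
\]
with constants independent of $g$. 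A companion consequence of the block structure is that on each $F_k$ the $L^s(\mathrm{d}\nu_{\alpha-1})$-norms all scale the same way in $\lambda_{n_k}$, so that the Lyapunov inequality is reversible up to a constant uniform in $k$: for all $k$ and all $h\in F_k$,
\[
\|h\|_{L^{p/\beta}(\mathrm{d}\nu_{\alpha-1})}^{1-\theta}\,\|h\|_{L^{q/\beta}(\mathrm{d}\nu_{\alpha-1})}^{\theta}\;\leq\; C\,\|h\|_{L^{r/\beta}(\mathrm{d}\nu_{\alpha-1})}.
\]
For a single monomial this is the asymptotic $\|t^{\lambda}\|_{L^{s}(\mathrm{d}\nu_{\alpha-1})}^{s}=B(\lambda s+1,\alpha)\sim\Gamma(\alpha)(\lambda s)^{-\alpha}$, and the general case follows from the uniform equivalence of norms on the $F_k$.

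\textbf{Step 1: interpolation on a single block.} Fix $k$ and $h\in F_k$. The two restricted weak type hypotheses give, for all $\lambda>0$, $\mu(|Th|>\lambda)\leq\min\{C_p^p\lambda^{-p}\|h\|_{L^{p/\beta}(\mathrm{d}\nu_{\alpha-1})}^p,\ C_q^q\lambda^{-q}\|h\|_{L^{q/\beta}(\mathrm{d}\nu_{\alpha-1})}^q\}$. Writing $\|Th\|_{L^r(\mathrm{d}\mu)}^r=r\int_0^\infty\lambda^{r-1}\mu(|Th|>\lambda)\,\mathrm{d}\lambda$, splitting the integral at the crossover of the two bounds and integrating each piece (here $p<r<q$ is used for convergence) yields
\[
\|Th\|_{L^r(\mathrm{d}\mu)}\;\leq\;C(p,q,r)\,C_p^{1-\theta}C_q^{\theta}\,\|h\|_{L^{p/\beta}(\mathrm{d}\nu_{\alpha-1})}^{1-\theta}\|h\|_{L^{q/\beta}(\mathrm{d}\nu_{\alpha-1})}^{\theta}.
\]
Combined with the reversed Lyapunov inequality above, this shows $T$ is of restricted strong type $r$ with a constant $C_r(k)\leq C\,C_p^{1-\theta}C_q^{\theta}$ uniform in $k$. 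Reaching the clean constant $C_p^{1-\theta}C_q^{\theta}$ of the statement requires an optimal choice of splitting point together with the observation that only finitely many blocks escape the sharp asymptotics; I will carry this bookkeeping out, but it is routine.

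\textbf{Step 2: reassembly (the main obstacle).} Let $f=\sum_k f_k\in M_\Lambda$. Sublinearity gives $|Tf|\leq\sum_k|Tf_k|$, but Minkowski's inequality alone would only produce the $\ell^1$-sum $\sum_k\|Tf_k\|_{L^r(\mathrm{d}\mu)}$, which is too large. The heart of the matter is to show that the $L^r(\mathrm{d}\mu)$-norm of $\sum_k|Tf_k|$ still decouples, i.e. $\|Tf\|_{L^r(\mathrm{d}\mu)}^r\lesssim\sum_k\|Tf_k\|_{L^r(\mathrm{d}\mu)}^r$: this is where the uniform weak type bounds at \emph{two} exponents and the geometric separation of the scales $\lambda_{n_k}$ genuinely enter, forcing the functions $Tf_k$ to live at well-separated levels and hence to add almost orthogonally in $L^r$. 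Granting this, Step 1 gives
\[
\|Tf\|_{L^r(\mathrm{d}\mu)}^r\lesssim (C_p^{1-\theta}C_q^{\theta})^r\sum_k\|f_k\|_{L^{r/\beta}(\mathrm{d}\nu_{\alpha-1})}^r=(C_p^{1-\theta}C_q^{\theta})^r\sum_k\big(\|f_k\|_{L^{r/\beta}(\mathrm{d}\nu_{\alpha-1})}^{r/\beta}\big)^{\beta}\leq(C_p^{1-\theta}C_q^{\theta})^r\Big(\sum_k\|f_k\|_{L^{r/\beta}(\mathrm{d}\nu_{\alpha-1})}^{r/\beta}\Big)^{\beta},
\]
the last inequality being $\sum_k x_k^\beta\leq(\sum_k x_k)^\beta$, valid precisely because $\beta\geq 1$ (this is exactly where the hypothesis $\beta\geq 1$ is used). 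By the decoupling, the right-hand side is $\asymp (C_p^{1-\theta}C_q^{\theta})^r\|f\|_{L^{r/\beta}(\mathrm{d}\nu_{\alpha-1})}^r$, which proves (i). I expect the decoupling of $\|\sum_k|Tf_k|\|_{L^r(\mathrm{d}\mu)}$ to be the delicate point, since it must use the full strength of the two-exponent hypothesis rather than any soft norm inequality.

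\textbf{Step 3: compactness.} Assume $C_p(k)\to 0$; by Step 1, $C_r(k)\to 0$ as well. Let $P_m\colon M_\Lambda\to\bigoplus_{k\leq m}F_k$ be the canonical block projection, bounded on $L^{r/\beta}(\mathrm{d}\nu_{\alpha-1})$ by the decoupling. For $f\in M_\Lambda$, sublinearity ($|Tf-T(P_mf)|\leq|T(f-P_mf)|$) together with Step 2 applied to the tail $\sum_{k>m}f_k$ gives
\[
\|Tf-T(P_mf)\|_{L^r(\mathrm{d}\mu)}\lesssim\Big(\sup_{k>m}C_r(k)\Big)\|f\|_{L^{r/\beta}(\mathrm{d}\nu_{\alpha-1})},
\]
so $T\circ P_m\to T$ in operator norm. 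Each $T\circ P_m$ sends bounded sets into $T$ of a bounded subset of the finite-dimensional space $\bigoplus_{k\leq m}F_k$, hence into a relatively compact subset of $L^r(\mathrm{d}\mu)$ by continuity of $T$ on finite dimensions (which follows from (i)); thus $T\circ P_m$ is compact, and $T$, a uniform limit of compact operators, is compact. The case $C_q(k)\to 0$ is identical.
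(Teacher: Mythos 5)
Your Step 1 is correct and matches the paper's \cref{interpolbis} plus \cref{prop:atom}. Step 3 is also essentially the paper's compactness argument. The problem is Step 2, which you yourself single out as ``the heart of the matter'' and then do not prove. This is not a technicality that can be waved through: it is precisely where all the work of the theorem lives, and your proposed intermediate inequality is in fact the wrong one.

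Concretely, you propose to reduce to the claim
\[
\Big\|\sum_k |Tf_k|\Big\|_{L^r(\mathrm{d}\mu)}^r \;\lesssim\; \sum_k \|Tf_k\|_{L^r(\mathrm{d}\mu)}^r.
\]
This is an ``upper Gurariy--Macaev'' statement in $L^r(\mathrm{d}\mu)$ for the \emph{images} $Tf_k$, but there is no structural reason the images should decouple in $L^r(\mathrm{d}\mu)$: the decoupling theorem of the paper holds for the $f_k$ in the reference spaces $L^s(\mathrm{d}\nu_{\alpha-1})$, not for arbitrary functions $Tf_k$ in an arbitrary target $L^r(\mathrm{d}\mu)$. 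Worse, the right-hand side $\sum_k\|Tf_k\|_{L^r}^r$ can collapse: there is nothing in the hypotheses forcing every $Tf_k$ to be nonzero or large, so the inequality is strictly stronger than the theorem's conclusion and, as far as I can tell, is simply false (you can already feel this from the counterexample in \cref{sec.C-example-subcritical}: an operator of restricted strong type $r$ whose block images $T(t^{\lambda_k})$ overlap across blocks, destroying exactly this kind of almost-orthogonality). The informal picture of ``$Tf_k$ living at well-separated levels'' does not follow from weak type bounds at two exponents; those bounds control sizes, not supports or oscillation.

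What the paper actually does is quite different and avoids any decoupling of the $Tf_k$ themselves. Writing $g_k=|Tf_k|/\|f_k\|_{L^{r/\beta}(\mathrm{d}\nu_{\alpha-1})}$, it inserts weights $g_k^{\gamma}\lambda_k^{-\theta}\cdot g_k^{1-\gamma}\lambda_k^{\theta}$ inside the sum, applies H\"older in $k$ with exponents $\beta$ and $\frac{\beta}{\beta-1}$ (here $\beta\geq 1$ enters, but not through $\sum x_k^{\beta}\leq(\sum x_k)^{\beta}$ as in your write-up), then Minkowski (twice, needing $r\geq 1+\beta$, and a separate argument for $\beta<r<1+\beta$), reducing matters to bounding the diagonal-type quantities $A_k$ involving mixed integrals $\int g_k^{r\gamma}g_j^{r(1-\gamma)}\,\mathrm{d}\mu$. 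The near-diagonal $j\leq k$ terms are controlled by \cref{interpolbis} and the lacunary summation \eqref{eq.lacunary-est}. The $j>k$ terms are where the strict inequality $p<r<q$ is essential: one applies H\"older in $t$ at the slightly perturbed exponents $\tfrac{rs_\varepsilon}{2}<r<\tfrac{rs'_\varepsilon}{2}$, still inside $(p,q)$, so that \cref{interpolbis} plus \cref{prop:atom} trade a small loss in the exponent for a geometric factor $\lambda_k^{\pm\frac{\alpha\beta\varepsilon}{r(1-\varepsilon)}}$, which is exactly what makes the tail $\sum_{j>k}$ summable. You correctly anticipate that the two-exponent hypothesis must enter in an essential way, but the mechanism is this exponent-perturbation gain in powers of $\lambda_k$, not almost-orthogonality of the $Tf_k$. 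As written, your Step 2 contains no argument and the claim you want to grant is not the one that is true; this is a genuine gap in the proof.
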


\begin{remark} It is important to remark that such an interpolation result does not easily follow from the classical Marcinkiewicz interpolation theorem, as the space $M_{\Lambda}^{\frac{p}{\beta}}$ is \text{not} dense in $L^{\frac{p}{\beta}}_{\Lambda}$ and that interpolation of subspaces is in general a difficult matter.  
\end{remark}

\begin{remark} In \cref{sec.C-example-subcritical} we construct an operator $T$ that is of restricted strong type $r>1$ (therefore of restricted weak type $r$) with constants $C_r(k) \leq C_r$, but not of strong type $r$, meaning that one cannot take $p=q=r>1$ in the above statement. The theorem is therefore optimal in general. We also provide an example of application of the above theorem to illustrate why it is non-trivial, see \cref{sec.example-subcritical}.
\end{remark}

\begin{remark}[Limiting case]\label{rem:strong}
If $T$ is of restricted strong type $\beta$ then it is easily seen that $T$ is then of strong type $\beta$. Observe indeed that for all $f = \sum_k f_k \in M_{\Lambda}$ and because $\beta \geq 1$, the triangle inequality and the Minkowski inequality imply
\[
    \|Tf\|_{L^{\beta}(\mathrm{d}\mu)} \lesssim \|\{Tf_k\}_{k\geq 0}\|_{L^{\beta}(\mathrm{d}\mu)\ell^1_k} \lesssim \|\{Tf_k\}_{k\geq 0}\|_{\ell^1_kL^{\beta}(\mathrm{d}\mu)}.
\]
Then, because of the restricted strong-type continuity assumption on $T$, it follows that 
\[
    \|Tf\|_{L^{\beta}(\mathrm{d}\mu)} \lesssim \sum_{k\geq 0}\|f_{k}\|_{L^{1}(\mathrm{d}\nu_{\alpha-1})}
        \lesssim \|f\|_{L^{1}(\mathrm{d}\nu_{\alpha -1})}
\]
where the last estimate stems for an application of \cref{thm:Lp-frames}.
\end{remark}

In the case $\beta \in (0,1)$ we prove the following interpolation result.

\begin{Ntheorem}\label{thm.interpol-supercritical}
Let $\alpha >0$, $\beta \in (0,1)$, $\mu$ be a positive Borel measure on $[0,1]$ and $\Lambda$ be a quasi-lacunary sequence. Let $(p,q)$ be such that $1<p<q<\infty$, $r\in(p,q)$ and $\theta \in (0,1)$ such that $\frac{1}{r}=\frac{1-\theta}{p} + \frac{\theta}{q}$. Let $T$ be a sublinear operator acting on $M_{\Lambda}$. Assume that $T$ is of restricted weak type $p$ and $q$, with bounds
\[
    C_p(k) \leq C_p\varepsilon_k^{\frac{1-\beta}{p}} \text{ and } C_q(k)\leqslant C_q\varepsilon_k^{\frac{1-\beta}{q}} \text{ for all } k\geq 0.
\]
Then $T$ is of strong type $r$, with $C_r \leqslant C_{\varepsilon}^{\frac{1-\beta}{r}}C_p^{1-\theta}C_q^{\theta}$, where $C_{\varepsilon}:=\displaystyle\sum_{k\geq 0} \varepsilon_k < \infty$. Moreover, $T$ is a compact operator. 
\end{Ntheorem}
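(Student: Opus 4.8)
The plan is to follow the same strategy as for Theorem A but to keep careful track of the weights $\varepsilon_k$, which now play the role of a summable gain that compensates for the fact that, when $\beta<1$, the $\ell^1$-decoupling used in Remark \ref{rem:strong} must be replaced by an $\ell^{1/\beta}$-type decoupling provided by the strong decoupling of $L^p$-norms on quasi-lacunary Müntz spaces (Theorem \ref{thm:Lp-frames}). First I would decompose an arbitrary $f = \sum_k f_k \in M_\Lambda$ into its blocks $f_k \in F_k$ and apply, blockwise, the real-interpolation argument underlying Theorem A: from the restricted weak type $p$ and $q$ bounds on $F_k$ with constants $C_p(k)$ and $C_q(k)$, the classical Marcinkiewicz argument on the single finite-dimensional space $F_k$ yields a restricted strong type $r$ bound on $F_k$ with constant $C_r(k) \lesssim C_p(k)^{1-\theta}C_q(k)^{\theta}$. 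Plugging in the hypotheses $C_p(k)\leq C_p\varepsilon_k^{(1-\beta)/p}$ and $C_q(k)\leq C_q\varepsilon_k^{(1-\beta)/q}$ and using $\frac{1-\theta}{p}+\frac{\theta}{q}=\frac1r$ gives
\[
    C_r(k) \lesssim C_p^{1-\theta}C_q^{\theta}\,\varepsilon_k^{(1-\beta)(\frac{1-\theta}{p}+\frac{\theta}{q})} = C_p^{1-\theta}C_q^{\theta}\,\varepsilon_k^{\frac{1-\beta}{r}}.
\]
So $\|Tf_k\|_{L^r(\mathrm{d}\mu)} \lesssim C_p^{1-\theta}C_q^{\theta}\,\varepsilon_k^{\frac{1-\beta}{r}}\,\|f_k\|_{L^{r/\beta}(\mathrm{d}\nu_{\alpha-1})}$.

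Next I would reassemble the blocks. By sublinearity and the triangle/Minkowski inequalities in $L^r(\mathrm{d}\mu)$ (here $r>1$ so these are available), $\|Tf\|_{L^r(\mathrm{d}\mu)} \leq \sum_k \|Tf_k\|_{L^r(\mathrm{d}\mu)}$, and then a Hölder splitting of the summand as $\varepsilon_k^{(1-\beta)/r}\|f_k\|_{L^{r/\beta}} = \varepsilon_k^{(1-\beta)/r}\cdot\|f_k\|_{L^{r/\beta}}$ — distributing $\varepsilon_k = \varepsilon_k^{1-1/\beta'}\cdot\varepsilon_k^{\,?}$ appropriately — lets me apply Hölder's inequality in $k$ with exponents chosen so that one factor produces $\big(\sum_k \varepsilon_k\big)^{\text{(something)}} = C_\varepsilon^{\text{(something)}}$ and the other factor produces $\big(\sum_k \|f_k\|_{L^{r/\beta}(\mathrm{d}\nu_{\alpha-1})}^{r/\beta}\big)^{\beta/r}$. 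The exponent bookkeeping must be arranged so that the total power of $C_\varepsilon$ is exactly $\frac{1-\beta}{r}$; concretely one writes $\varepsilon_k^{(1-\beta)/r} = \varepsilon_k^{(1-\beta)/r}$ and applies Hölder with exponents $\frac{1}{1-\beta}$ and $\frac{1}{\beta}$ on the $k$-sum, matching $\frac{1}{r}\cdot\frac{1}{1-\beta}\cdot(1-\beta) \cdot$ etc. — the point is that $(1-\beta)+\beta=1$ makes it close. The second factor $\big(\sum_k \|f_k\|_{L^{r/\beta}(\mathrm{d}\nu_{\alpha-1})}^{r/\beta}\big)^{\beta/r} = \|\{f_k\}\|_{\ell^{r/\beta}L^{r/\beta}(\mathrm{d}\nu_{\alpha-1})}$ is then controlled by $\|f\|_{L^{r/\beta}(\mathrm{d}\nu_{\alpha-1})}$ precisely by the strong decoupling theorem (Theorem \ref{thm:Lp-frames}), since $r/\beta>1$ and $\Lambda$ is quasi-lacunary. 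This chain yields $C_r \lesssim C_\varepsilon^{(1-\beta)/r}C_p^{1-\theta}C_q^{\theta}$, which is the claimed bound.

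For the compactness statement, I would argue that the operator $T$ is the norm-limit of the finite-rank (hence compact) truncations $T_M f := \sum_{k\leq M} T f_k$: indeed, repeating the estimate above for the tail $\sum_{k>M}$ gives $\|Tf - T_M f\|_{L^r(\mathrm{d}\mu)} \lesssim \big(\sum_{k>M}\varepsilon_k\big)^{(1-\beta)/r}C_p^{1-\theta}C_q^{\theta}\|f\|_{L^{r/\beta}(\mathrm{d}\nu_{\alpha-1})}$, and the tail $\sum_{k>M}\varepsilon_k \to 0$ since $C_\varepsilon = \sum_k \varepsilon_k<\infty$. Hence $\|T - T_M\|_{M_\Lambda^{r/\beta}(\mathrm{d}\nu_{\alpha-1})\to L^r(\mathrm{d}\mu)} \to 0$; a uniform limit of compact operators is compact, so $T$ is compact (after noting that each $T_M$, being defined on a finite direct sum of finite-dimensional spaces, extends to a bounded finite-rank operator on the closure $L^{r/\beta}_{\Lambda,\alpha}$).

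The main obstacle I anticipate is the Hölder bookkeeping in the reassembly step: one has to split the weight $\varepsilon_k$ and choose the conjugate exponents so that (a) the $\|f_k\|$-factor appears with exactly the power $r/\beta$ needed to invoke the decoupling theorem, and (b) the leftover power of $C_\varepsilon$ comes out to be exactly $(1-\beta)/r$ and nothing spurious is left over. A secondary subtlety is making the per-block Marcinkiewicz argument genuinely uniform — one must check that the implied constant in $C_r(k) \lesssim C_p(k)^{1-\theta}C_q(k)^{\theta}$ depends only on $p,q,r,\theta$ and not on $k$ (which is fine, since the classical Marcinkiewicz constant depends only on these exponents), and that it is legitimate to run the classical theorem on each $F_k$ even though $F_k$ is not dense in $L^{r/\beta}$ — but this is precisely where finite-dimensionality of $F_k$ saves us, exactly as in Theorem A.
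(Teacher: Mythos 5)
Your Step 1 (the per-block Marcinkiewicz bound $C_r(k)\lesssim C_p^{1-\theta}C_q^\theta\,\varepsilon_k^{(1-\beta)/r}$, via \cref{interpolbis} and the exponent identity $\frac{1-\theta}{p}+\frac{\theta}{q}=\frac1r$) and your compactness argument are both correct and match the paper. The gap is in the reassembly step, and it is not a bookkeeping issue that can be fixed by choosing the Hölder exponents more cleverly: the inequality you want is \emph{false} for $r>1$.

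Concretely, after the triangle inequality you need
\[
\sum_{k\geq 0}\varepsilon_k^{\frac{1-\beta}{r}}\,\|f_k\|_{L^{r/\beta}(\mathrm{d}\nu_{\alpha-1})}\ \lesssim\ C_\varepsilon^{\frac{1-\beta}{r}}\left(\sum_{k\geq 0}\|f_k\|_{L^{r/\beta}(\mathrm{d}\nu_{\alpha-1})}^{r/\beta}\right)^{\beta/r},
\]
with $C_\varepsilon=\sum_k\varepsilon_k$. Take $\varepsilon_k=N^{-1}\mathbf 1_{1\leq k\leq N}$ (so $C_\varepsilon=1$) and $\|f_k\|_{L^{r/\beta}}=\mathbf 1_{1\leq k\leq N}$. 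The left-hand side is $N^{1-\frac{1-\beta}{r}}$ and the right-hand side is $N^{\beta/r}$; the inequality requires $1-\frac{1-\beta}{r}\leq\frac{\beta}{r}$, i.e.\ $r\leq 1$. Since here $r>p>1$, no uniform constant exists. Said differently, the two exponents you would need to match are not Hölder conjugate: forcing the $f_k$-factor to be $\ell^{r/\beta}$ (so that \cref{thm:Lp-frames} applies) forces the $\varepsilon_k$-exponent to be $\frac{1-\beta}{r-\beta}<1$, which requires a summability assumption strictly stronger than $\sum_k\varepsilon_k<\infty$; forcing the $\varepsilon_k$-factor to produce $C_\varepsilon^{(1-\beta)/r}$ forces the $f_k$-factor to be $\ell^{r/(r-1+\beta)}$, a strictly stronger norm than $\ell^{r/\beta}$ and hence not controlled by $\|f\|_{L^{r/\beta}(\mathrm{d}\nu_{\alpha-1})}$. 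The exponents $\frac{1}{1-\beta}$, $\frac{1}{\beta}$ you single out are even further off: they give $\big(\sum_k\varepsilon_k^{1/r}\big)^{1-\beta}\big(\sum_k\|f_k\|^{1/\beta}\big)^{\beta}$, and $\sum_k\varepsilon_k^{1/r}$ can diverge under $\sum_k\varepsilon_k<\infty$.

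This is exactly why the paper's proof is much longer. The triangle-then-Hölder route is used only in the borderline case $r=1$ (where $(1-\beta)+\beta=1$ makes the exponents match). For $r>1$ the paper never estimates $\|Tf_k\|_{L^r(\mathrm{d}\mu)}$ and sums; it expands $\big(\sum_k|Tf_k|\big)^n$ (first for integer $n$, then interpolates to non-integer $r$), introduces the normalized blocks $A_k=|Tf_k|/\|f_k\|$ and auxiliary parameters $\gamma,\theta$ to write $|Tf_k|=\|f_k\|A_k^\gamma\lambda_k^{-\theta}\cdot A_k^{1-\gamma}\lambda_k^{\theta}$, then applies Hölder in $k$, Minkowski, and a further Hölder in $t$ to the resulting off-diagonal cross-terms $\int A_i^{\gamma n}A_j^{(1-\gamma)n}\,\mathrm d\mu$. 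The crucial extra ingredient is the lacunary decay $\lambda_i/\lambda_{i+kA}\lesssim q^{-kA}$, exploited via \cref{lem.sequence} after splitting into $j\leq i$ and $j>i$, which is what allows the $\sum_k\varepsilon_k$ summability to barely suffice. In short, you need to interpolate at the level of the expanded $n$-fold product and exploit cancellation between blocks, not block by block and then sum.
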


\begin{remark} In \cref{sec.examples-supercritical} we provide a non-trivial example of application of our theorem, whereas in \cref{sec.counter-examples-supercritical} we construct an operator $T$ that of restricted strong type $r$ with $C_r(k) \leq C_r\varepsilon_k^{\frac{1-\beta}{p}}$ and where $\sum_{k\geq 0} \varepsilon_k<0$, but that is not of strong type $r$, therefore proving that one cannot take $p=q$ in the above. In \cref{sec.necessity} we prove that in the case of positive $T$, the condition $\sum_{k\geq 0} \varepsilon_k < \infty$ is necessary. 
\end{remark}

\subsection{Consequences of the interpolation theorems}

As the assumptions of \cref{thm.main-marcinkiewicz} and \cref{thm.interpol-supercritical} may be difficult to check in practice, we present some corollaries that generalize previous results \cite{LM24}.

\subsubsection{The subcritical case $\beta \geq 1$}\label{sec.consequence-subcritical}

The following condition will appear to be useful: let $\lambda_{n_k}$ be the lacunary sequence defined in the definition of $\Lambda$. We say that $\mu$ satisfies the $B_{p,\Lambda}(\alpha, \beta)$ condition if there holds  
\[
    \sup_{k\geq 0} \lambda_{n_k}^{\alpha\beta}\int_{[0,1]} t^{p\lambda_{n_k}}\mathrm{d}\mu(t) < \infty, 
\]
This generalizes the condition $B_{p}$ introduced in \cite{GaLe}, which corresponds to our condition $B_{p,\Lambda}(1,1)$. Remark that in the lacunary case, $B_{p,\Lambda}(\alpha,\beta)$ is nothing but the assumption of \cref{thm.main-marcinkiewicz}, and therefore a weaker assumption in the quasi-lacunary case.

In the case $T=\imath_{\mu}$, we observe that a one-sided boundedness condition is enough in order to apply \cref{thm.main-marcinkiewicz}. More precisely, we have the following:  

\begin{corollary}\label{coro:boundedness}
Let $\alpha>0$, $\beta\geq 1$ and  $p\geq \beta$. Let $\Lambda$ be a quasi-lacunary sequence and $\mu$ be a positive Borel measure on $[0,1]$. Assume that the inclusion $\imath_{\mu}$ is of restricted weak type $p$. Then, for any $r>p$, $\imath_{\mu}$ is of strong type $r$.
\end{corollary}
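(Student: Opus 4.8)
The plan is to reduce the statement to Theorem \ref{thm.main-marcinkiewicz} by producing, out of the single restricted-weak-type-$p$ hypothesis, a second endpoint at some exponent $q>r$. The key observation is that for the inclusion operator $\imath_\mu$ one has the monotonicity $\|f\|_{L^s(\mathrm{d}\mu)}\leq \mu([0,1])^{\frac1s-\frac1q}\|f\|_{L^q(\mathrm{d}\mu)}$ on $[0,1]$ when $s\le q$; but since we do not a priori know $\mu$ is finite we should rather work on the Müntz side. Concretely, I would pick any $q$ with $r<q<\infty$ and show that restricted weak type $p$ for $\imath_\mu$ automatically upgrades to restricted \emph{strong} (hence weak) type $q$ on each $F_k$, with a uniform constant. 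This is where the structure of $F_k$ enters: $F_k$ is finite-dimensional (dimension $\le N$), so all $L^s(\mathrm{d}\mu)$-norms on $F_k$ are equivalent, and likewise all $L^s(\mathrm{d}\nu_{\alpha-1})$-norms on $F_k$ are equivalent; the only point requiring care is that the comparison constants be bounded uniformly in $k$.

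The mechanism for uniformity is the strong decoupling of $L^p$ norms on Müntz spaces (the Gurariy--Macaev phenomenon and its generalizations, i.e. \cref{thm:Lp-frames} as quoted in \cref{rem:strong}): the monomials $t^{\lambda}$, $\lambda\in\Lambda$, form an unconditional basic sequence in every $L^s_\Lambda$, and more precisely there are constants depending only on $\Lambda$ and $s$ (not on $k$) comparing $\|f_k\|_{L^s(\mathrm{d}\nu_{\alpha-1})}$ with, say, the $L^{s'}$ norm of $f_k$ after the change of variables $t\mapsto t^{1/\lambda_{n_k}}$ which normalizes the block $E_k$ to a fixed compact range of exponents. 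Under this rescaling each $F_k$ becomes (uniformly in $k$) a fixed $N$-dimensional space of the monomials $\{u^{\lambda/\lambda_{n_k}}\}$ with exponents in a bounded set bounded away from $0$; on such a family the ratios of any two $L^s$-norms are bounded above and below independently of $k$. Pulling this back, I get: for $f_k\in F_k$,
\[
    \|f_k\|_{L^{q/\beta}(\mathrm{d}\nu_{\alpha-1})} \lesssim \lambda_{n_k}^{\alpha\beta(\frac{1}{p}-\frac1q)\cdot(\text{something})}\,\|f_k\|_{L^{p/\beta}(\mathrm{d}\nu_{\alpha-1})},
\]
and a matching reverse bound on the $\mu$-side, with all implicit constants uniform in $k$; here one has to track the powers of $\lambda_{n_k}$ carefully, and this bookkeeping — making sure the $\lambda_{n_k}$-powers cancel so that $C_q(k)\le C_q<\infty$ — is the main technical obstacle. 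It is exactly the kind of computation that underlies the $B_{p,\Lambda}(\alpha,\beta)$ condition, so I expect the hypothesis ``$\imath_\mu$ of restricted weak type $p$'' to be equivalent to a $B_{p,\Lambda}$-type bound, and the finite-dimensional norm comparisons then give the $B_{q,\Lambda}$-type bound for free whenever $q>p$.

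Once restricted weak type $q$ with uniform constant $C_q<\infty$ is in hand, for any chosen $r\in(p,q)$ we apply Theorem \ref{thm.main-marcinkiewicz}(i) with the obvious $\theta\in(0,1)$ determined by $\frac1r=\frac{1-\theta}{p}+\frac{\theta}{q}$, and conclude that $\imath_\mu$ is of strong type $r$ with $C_r\le C_p^{1-\theta}C_q^\theta$. Since $q>r$ was arbitrary subject to $q<\infty$, this covers every $r>p$, which is the assertion. I would also remark that one cannot hope to push $r$ down to $p$ itself, consistently with the counterexample announced in the second remark after Theorem \ref{thm.main-marcinkiewicz}. The only subtlety to flag in the write-up is that the finite-dimensionality of the $F_k$ is not enough by itself — uniformity in $k$ genuinely requires the quasi-lacunary structure and the associated decoupling estimates.
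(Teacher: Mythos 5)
Your high-level strategy is the same as the paper's: manufacture a second endpoint and invoke \cref{thm.main-marcinkiewicz}. But the crucial step — why restricted weak type $p$ upgrades to restricted strong (or weak) type at some exponent $>p$ with a $k$-uniform constant — is exactly what you leave as ``the main technical obstacle,'' and your proposed mechanism (finite-dimensional norm equivalence on each $F_k$, controlled via rescaling/decoupling) does not by itself close it. Finite-dimensionality of $F_k$ tells you that on a \emph{fixed} $F_k$ the weak-type-$p$ quasinorm and the strong-type-$q$ norm are comparable, but gives no handle on how the comparison constant grows with $k$; and the decoupling theorem \cref{thm:Lp-frames} (what you call the Gurariy--Macaev phenomenon) is not the right tool here — it compares $\|\sum f_k\|$ with $(\sum\|f_k\|^p)^{1/p}$ and says nothing about comparing two $L^s$-norms of a single $f_k$.

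The missing ingredient is \cref{prop:atom} (the generalized Bernstein estimate), used in two ways. First, with one exponent equal to $\infty$, it gives the pointwise bound $\|f_k\|_\infty\lesssim\lambda_{n_k}^{\alpha\beta/r}\|f_k\|_{L^{r/\beta}(\mathrm{d}\nu_{\alpha-1})}=:\Lambda_k$, which \emph{truncates} the distribution function: $\mu(|f_k|>\lambda)=0$ for $\lambda>\Lambda_k$. One then writes the Cavalieri formula $\|f_k\|_{L^r(\mathrm{d}\mu)}^r=r\int_0^{\Lambda_k}\lambda^{r-1}\mu(|f_k|>\lambda)\,\mathrm{d}\lambda$, inserts the weak-type-$p$ bound, and integrates (this is where $r>p$ is used so that $r-p-1>-1$); the result is $\lesssim\|f_k\|_{L^{p/\beta}}^p\Lambda_k^{r-p}$. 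Second, \cref{prop:atom} again converts $\|f_k\|_{L^{p/\beta}(\mathrm{d}\nu_{\alpha-1})}$ into $\lambda_{n_k}^{\alpha\beta(1/r-1/p)}\|f_k\|_{L^{r/\beta}(\mathrm{d}\nu_{\alpha-1})}$, and the two powers of $\lambda_{n_k}$ cancel exactly, giving restricted strong type $r$ with uniform constant, for every $r>p$. From there \cref{thm.main-marcinkiewicz} applies with two exponents $p<r_1<r<r_2$. Without the Cavalieri/truncation step there is no way to pass from a weak-type hypothesis at one exponent to a strong-type conclusion at a larger exponent — that implication is false for general sublinear operators, and it is precisely the boundedness of $|f_k|=|\imath_\mu f_k|$, quantified by \cref{prop:atom}, that makes it true for the inclusion. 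So your plan is salvageable and essentially the paper's, but the proof as written has a genuine gap: you assert the upgrade without producing the Cavalieri computation that delivers it, and you point at the wrong lemma for uniformity.
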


\begin{remark}
In particular, if $\imath_{\mu}$ is of strong-type $p\geq \beta$, it follows that $\imath_{\mu}$ if of strong type $r$ for any $r\geq p$.
\end{remark}

\begin{corollary}\label{coro.embeddings-subcritical}
Let $\Lambda$ be a quasi-lacunary sequence, with blocks of length bounded by $N\geq 1$. Let $\mu$ be a positive Borel regular measure on $[0,1]$ satisfying the condition $B_{p,\Lambda}(\alpha,\beta)$ for some $p >0$, $\alpha > 0$ and $\beta\geq 1$. Then, 
\begin{enumerate}[(i)]
    \item If $pN\leq \beta$ then $M_{\Lambda,\alpha}^{\frac{r}{\beta}} \hookrightarrow L^r(\mathrm{d}\mu)$ holds for all $r\geq \beta$; 
    \item If $pN >\beta$ then $M_{\Lambda,\alpha}^{\frac{r}{\beta}} \hookrightarrow L^r(\mathrm{d}\mu)$ holds for any $r>pN$.
\end{enumerate}
\end{corollary}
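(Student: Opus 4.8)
The plan is to deduce the corollary from \cref{thm.main-marcinkiewicz}, through its consequence \cref{coro:boundedness} and the limiting case \cref{rem:strong}, applied to the inclusion operator $T=\imath_{\mu}$ with the given $\alpha,\beta$. The real work is to verify that $\imath_{\mu}$ satisfies a suitable \emph{restricted} type bound on each block $F_{k}$, with constants uniform in $k$. As a preliminary reduction: since $M_{\Lambda}$ is dense in $M_{\Lambda,\alpha}^{r/\beta}$ by definition, the embedding $M_{\Lambda,\alpha}^{r/\beta}\hookrightarrow L^{r}(\mathrm{d}\mu)$ is precisely the assertion that $\imath_{\mu}$ is of strong type $r$; so it suffices to establish this for the stated range of $r$.

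The heart of the matter is the following uniform block estimate. Put $p_{0}:=\beta$ in case (i) and $p_{0}:=pN$ in case (ii); in both cases $p_{0}\geq\beta\geq 1$ and $p_{0}\geq p$. I would show that $\imath_{\mu}$ is of restricted strong type $p_{0}$ \emph{with constants independent of $k$}, i.e. $\|g\|_{L^{p_{0}}(\mathrm{d}\mu)}\lesssim\|g\|_{L^{p_{0}/\beta}(\mathrm{d}\nu_{\alpha-1})}$ for all $k$ and all $g\in F_{k}$. After replacing the given quasi-lacunary decomposition by the finest one (split each block at every consecutive ratio exceeding $q$), all exponents of $E_{k}$ become comparable --- up to a factor depending only on $N$ and $q$ --- to the common block scale, which one may take to be $\lambda_{n_{k+1}}=\max E_{k}$, a point of the lacunary subsequence. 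Three ingredients then enter: (a) converting the integral condition $B_{p,\Lambda}(\alpha,\beta)$ into the pointwise mass bound $\mu(\{\,1-t\leq c/\lambda\,\})\lesssim\lambda^{-\alpha\beta}$, valid for every fixed $c>0$ and every $\lambda\in E_{k}$ (on that set $t^{p\lambda_{n_{k+1}}}\gtrsim 1$, and $1/\lambda\asymp 1/\lambda_{n_{k+1}}$, with implied constants depending on $c,N,q,p$); (b) the elementary asymptotics $\int_{0}^{1}t^{s\lambda}\big(\log(1/t)\big)^{m}(1-t)^{\alpha-1}\,\mathrm{d}t\asymp\lambda^{-m-\alpha}$, which bound the $L^{p_{0}/\beta}(\mathrm{d}\nu_{\alpha-1})$-norm of any $g\in F_{k}$ from below by the size of $g$ near $t=1$ at scale $1/\lambda_{n_{k+1}}$; and (c) the triangle inequality, splitting $g$ into its at most $N$ monomials, which is exactly the loss responsible for the threshold $pN$ in case (ii). Combining (a)--(c) gives, uniformly in $k$, the domination of the $L^{p_{0}}(\mathrm{d}\mu)$-seminorm on $F_{k}$ by the $L^{p_{0}/\beta}(\mathrm{d}\nu_{\alpha-1})$-norm, which is the claim.

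Granting the block estimate, I would conclude as follows. In case (i) one has $p_{0}=\beta$, so $\imath_{\mu}$ is of restricted strong type $\beta$; by \cref{rem:strong} it is then of strong type $\beta$, and \cref{coro:boundedness} applied at the exponent $\beta$ promotes this to strong type $r$ for every $r>\beta$; together these give the embedding for all $r\geq\beta$. In case (ii) one has $p_{0}=pN>\beta$, so $\imath_{\mu}$ is of restricted strong --- hence of restricted weak --- type $pN$; since $pN\geq\beta$, \cref{coro:boundedness} applied at the exponent $pN$ gives strong type $r$ for every $r>pN$, i.e. the embedding for $r>pN$.

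The step I expect to be the main obstacle is the uniform block estimate, and specifically bridging the gap between a hypothesis ($B_{p,\Lambda}(\alpha,\beta)$) that only controls $\int t^{p\lambda}\,\mathrm{d}\mu$ at the lacunary scales $\lambda=\lambda_{n_{k}}$ and a conclusion that must hold for every element of the $\leq N$-dimensional spaces $F_{k}$. This requires using quasi-lacunarity to pin all exponents of a block to a single scale, converting an integral bound into pointwise estimates for the mass of $\mu$ near $1$, and --- most delicately --- keeping precise track of how all constants depend on $N$ and $q$. It is exactly this dependence (the price of a block carrying several nearly coalescing exponents) that, as the examples following \cref{thm.main-marcinkiewicz} show, prevents taking $p=q=r$ here, and hence explains why the supercritical regime $pN>\beta$ yields only $r>pN$, as opposed to all $r\geq\beta$ in the subcritical regime.
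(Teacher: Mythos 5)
Your outer scheme — establish a restricted type bound for $\imath_{\mu}$ at the single exponent $p_0 := \max\{\beta, pN\}$, then promote to strong type $r$ for $r>p_0$ via \cref{coro:boundedness} and handle $r=\beta$ via \cref{rem:strong} — is sound and matches the paper's logic, which applies \cref{thm.main-marcinkiewicz} after showing restricted strong type $\ell$ for all $\ell\geq\max\{\beta,pN\}$. The genuine gap is in your block estimate, and your sketch for it both misidentifies the mechanism and contains steps that do not go through.

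The paper obtains the uniform restricted bound in one clean line: \cref{thm:muntz-bound} gives the pointwise estimate $|f_k(x)|\lesssim x^{(\lambda_{n_k}+1)/N}\|f_k\|_{L^\infty}$ for every $f_k\in F_k$, so
\[
  \int |f_k|^{\ell}\,\mathrm{d}\mu \;\lesssim\; \|f_k\|_{L^\infty}^{\ell}\int t^{\ell\lambda_{n_k}/N}\,\mathrm{d}\mu
  \;\leq\; \|f_k\|_{L^\infty}^{\ell}\int t^{p\lambda_{n_k}}\,\mathrm{d}\mu,
\]
the second step requiring precisely $\ell/N\geq p$; then \cref{prop:atom} converts $\|f_k\|_{L^\infty}$ into $\lambda_{n_k}^{\alpha\beta/\ell}\|f_k\|_{L^{\ell/\beta}(\mathrm{d}\nu_{\alpha-1})}$, and $B_{p,\Lambda}(\alpha,\beta)$ supplies $\int t^{p\lambda_{n_k}}\,\mathrm{d}\mu\lesssim\lambda_{n_k}^{-\alpha\beta}$. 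Thus the threshold $pN$ comes from the $1/N$ in the exponent of the \emph{uniform pointwise block bound} of \cref{thm:muntz-bound}, not from the triangle inequality over the $\leq N$ monomials as you claim. Your proposed route has two concrete problems beyond this misattribution. First, ``refining to the finest decomposition'' does not automatically transport the $B_{p,\Lambda}$ hypothesis: that condition is stated only at the given lacunary scales $\lambda_{n_k}$, and the new block endpoints $\lambda_m$ produced by refinement have $\lambda_m^{\alpha\beta}\int t^{p\lambda_m}\,\mathrm{d}\mu$ a priori uncontrolled (the map $\lambda\mapsto\lambda^{\alpha\beta}\int t^{p\lambda}\,\mathrm{d}\mu$ is not monotone, and within a block the quasi-lacunary definition imposes no ratio constraints). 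Second, estimating $\|g\|_{L^{p_0}(\mathrm{d}\mu)}$ by splitting $g=\sum c_j t^{\lambda^{(j)}}$ into monomials and applying the triangle inequality requires you to control the $|c_j|$ by $\|g\|_{L^{p_0/\beta}(\mathrm{d}\nu_{\alpha-1})}$; with nearly coalescing exponents this conditioning constant degenerates, and controlling it uniformly is exactly the content of \cref{thm:muntz-bound} (or the Gurariy--Macaev frame inequalities), which you would be re-deriving rather than invoking. Replacing your steps (a)--(c) with the direct appeal to \cref{thm:muntz-bound} and \cref{prop:atom} repairs the proof and is the approach the paper takes.
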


\begin{remark} In case $\lambda_{n_k}^{\alpha\beta}\int_{[0,1]} t^{p\lambda_{n_k}}\mathrm{d}\mu(t) \to 0$ as $k\to \infty$, the embeddings are compact, see \cref{rem.compact-sub}.
\end{remark}

\begin{remark}
In particular, if $\Lambda$ is a lacunary sequence, i.e. $N=1$, \cref{coro.embeddings-subcritical} extends the results of \cite{GaLe,LM24}, and when $N>1$ the above is a natural generalization.
\end{remark}

We introduce the following notation: $\mathcal{M}_{x^{\gamma}}$ stands for the set of positive Borel measures $\mu$ supported on $[0,1]$ which satisfy 
\[
	\mu([1-\varepsilon, 1])\lesssim\varepsilon^{\gamma},	
\]
where the implicit constant depends only on $\mu$. For instance, $\mathrm{d}\nu_{\gamma - 1} := (1-x)^{\gamma - 1}\mathrm{d}x \in \mathcal{M}_{x^{\gamma}}$.

In \cite[Theorem A]{LM24} it is proven that $\mu\in\mathcal{M}_{x^{\beta}}$ if and only if the following embedding $M_{\Lambda}^{\frac{p}{\beta}} \hookrightarrow L^p(\mathrm{d}\mu)$ holds for all $p\geq \beta$. The following is a generalization in the case $\alpha \neq 1$.

\begin{corollary}\label{coro.subcritical1}
    Let $\Lambda$ be a quasi-lacunary and sub-geometric sequence. Let $\beta\geq 1$ and $\alpha>0$. Then, the following are equivalent:
    \begin{enumerate}[(i)]
    \item The measure $\mu$ belongs to $\mathcal{M}_{x^{\alpha\beta}}$.
    \item For any $p\geq \beta$, there holds $M_{\Lambda,\alpha}^{\frac{p}{\beta}} \hookrightarrow L^p(\mathrm{d}\mu)$.
    \end{enumerate}
\end{corollary}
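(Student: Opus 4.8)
The plan is to treat the two implications separately: $(i)\Rightarrow(ii)$ will be reduced to \cref{coro.embeddings-subcritical} after checking a $B_{p,\Lambda}(\alpha,\beta)$ condition, and $(ii)\Rightarrow(i)$ will be obtained by testing the embedding against the monomials $t^{\lambda_{n_k}}$ and then transferring the resulting estimates from the discrete scales $1/\lambda_{n_k}$ to all scales, which is the only place the sub-geometric hypothesis enters.

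For $(i)\Rightarrow(ii)$, assume $\mu([1-\varepsilon,1])\lesssim\varepsilon^{\alpha\beta}$. I would first show that $\mu$ satisfies $B_{p,\Lambda}(\alpha,\beta)$ for \emph{every} $p>0$: by the layer-cake formula and the substitution $\lambda=u^{s}$,
\[
    \int_{[0,1]}t^{s}\,\mathrm{d}\mu(t)=\int_{0}^{1}\mu\big((\lambda^{1/s},1]\big)\,\mathrm{d}\lambda\lesssim\int_{0}^{1}\big(1-\lambda^{1/s}\big)^{\alpha\beta}\,\mathrm{d}\lambda=s\,B(s,\alpha\beta+1)\lesssim s^{-\alpha\beta}
\]
for $s$ large, since $sB(s,\alpha\beta+1)=\Gamma(\alpha\beta+1)\Gamma(s+1)/\Gamma(s+\alpha\beta+1)\sim\Gamma(\alpha\beta+1)s^{-\alpha\beta}$; the finitely many remaining small values $s=p\lambda_{n_k}$ are harmless because $\mu$ is finite. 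Taking $s=p\lambda_{n_k}$ gives $\lambda_{n_k}^{\alpha\beta}\int t^{p\lambda_{n_k}}\,\mathrm{d}\mu\lesssim p^{-\alpha\beta}$ uniformly in $k$, i.e.\ $B_{p,\Lambda}(\alpha,\beta)$ holds. Choosing $p=\beta/N>0$, so that $pN=\beta$, \cref{coro.embeddings-subcritical}(i) yields $M_{\Lambda,\alpha}^{r/\beta}\hookrightarrow L^{r}(\mathrm{d}\mu)$ for all $r\geq\beta$, which is $(ii)$. Note this direction uses only that $\Lambda$ is quasi-lacunary.

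For $(ii)\Rightarrow(i)$ it is enough to use the hypothesis at the single exponent $p=\beta$. Testing the embedding $\|f\|_{L^{\beta}(\mathrm{d}\mu)}\lesssim\|f\|_{L^{1}(\mathrm{d}\nu_{\alpha-1})}$ on $f=t^{\lambda_{n_k}}\in M_{\Lambda}$, I would bound the left-hand side from below by restricting the integral to $[1-\tfrac{1}{\lambda_{n_k}},1]$: for $k$ large,
\[
    \|t^{\lambda_{n_k}}\|_{L^{\beta}(\mathrm{d}\mu)}\geq\big(1-\tfrac{1}{\lambda_{n_k}}\big)^{\lambda_{n_k}}\,\mu\big([1-\tfrac{1}{\lambda_{n_k}},1]\big)^{1/\beta}\gtrsim\mu\big([1-\tfrac{1}{\lambda_{n_k}},1]\big)^{1/\beta},
\]
using $(1-1/\lambda_{n_k})^{\lambda_{n_k}}\to e^{-1}$, while the right-hand side equals $\int_{0}^{1}t^{\lambda_{n_k}}(1-t)^{\alpha-1}\,\mathrm{d}t=B(\lambda_{n_k}+1,\alpha)\lesssim\lambda_{n_k}^{-\alpha}$. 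Hence $\mu([1-\tfrac{1}{\lambda_{n_k}},1])\lesssim\lambda_{n_k}^{-\alpha\beta}$ for all $k$. To pass to all $\varepsilon\in(0,1)$ I would invoke sub-geometricity: for $k$ large $\lambda_{n_{k+1}}\leq q'\lambda_{n_k}$, so any small $\varepsilon$ can be caught between consecutive scales, $1/\lambda_{n_{k+1}}\leq\varepsilon<1/\lambda_{n_k}$, and then, by monotonicity of $\delta\mapsto\mu([1-\delta,1])$ together with $\lambda_{n_k}\geq\lambda_{n_{k+1}}/q'>1/(q'\varepsilon)$,
\[
    \mu\big([1-\varepsilon,1]\big)\leq\mu\big([1-\tfrac{1}{\lambda_{n_k}},1]\big)\lesssim\lambda_{n_k}^{-\alpha\beta}\leq(q')^{\alpha\beta}\varepsilon^{\alpha\beta}.
\]
For $\varepsilon$ bounded away from $0$ the bound $\mu([1-\varepsilon,1])\lesssim\varepsilon^{\alpha\beta}$ is immediate since $\mu$ is finite. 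Therefore $\mu\in\mathcal{M}_{x^{\alpha\beta}}$.

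The genuinely delicate point is this final scale-transfer in $(ii)\Rightarrow(i)$: without the sub-geometric hypothesis the lacunary markers $\lambda_{n_k}$ may be spread so widely that the estimates obtained at the scales $1/\lambda_{n_k}$ give no control at intermediate scales, and one expects the equivalence to fail in that generality. The remaining ingredients — the reduction to \cref{coro.embeddings-subcritical}, the elementary Beta-integral asymptotics, and the monomial test functions — are routine.
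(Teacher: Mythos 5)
Your proposal is correct and follows essentially the same route as the paper: reduce $(i)\Rightarrow(ii)$ to \cref{coro.embeddings-subcritical} by verifying a $B_{p,\Lambda}(\alpha,\beta)$ condition for small enough $p$, and deduce $(ii)\Rightarrow(i)$ by testing the embedding on the monomials $t^{\lambda_{n_k}}$ and then using sub-geometricity to transfer the resulting bound from the scales $1/\lambda_{n_k}$ to all $\varepsilon$. The only differences are cosmetic: you derive the moment bound directly via the layer-cake formula and Beta-function asymptotics where the paper cites a lemma from \cite{CFT}, and you lower-bound $\|t^{\lambda_{n_k}}\|_{L^\beta(\mathrm{d}\mu)}$ by restricting to $[1-1/\lambda_{n_k},1]$ where the paper uses the equivalent $(1-\varepsilon)^{p/\varepsilon}\to e^{-p}$ trick.
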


\subsubsection{The supercritical case $0<\beta <1$}

In the case $\beta > 1$ we state similar corollaries. The appropriate condition will be the following one: we say that $\mu$ satisfies the $A_{p,\Lambda}(\alpha,\beta)$ condition if 
\begin{equation*}
    \sum_{k\geq 0} \lambda_{n_k}^\frac{\beta \alpha}{1-\beta}\left(\int_{[0,1]}t^{p\lambda_{n_k}}\,\mathrm{d}\mu(t) \right)^{\frac{1}{1-\beta}}<+\infty,
\end{equation*}
where $\{\lambda_{n_k}\}_{k\geq 0}$ is the lacunary sequence extracted from $\{\lambda_k\}_{k\geq 0}$. In the lacunary case this corresponds to the summability condition $\sum_{k\geq 0}\varepsilon_k < \infty$ of \cref{thm.interpol-supercritical}, and should be viewed as a weaker requirement in the quasi-lacunary case.  

\begin{corollary}\label{th5} Let $\Lambda$ be a quasi-lacunary sequence with blocks of size bounded by $N\geq 1$. Let $\mu$ be a positive Borel measure supported on $[0,1]$ satisfying $A_{p,\Lambda}(\alpha,\beta)$ for some $p>0$, $\alpha>0$ and $0<\beta<1$. Then,
\begin{enumerate}[(i)]
    \item If $pN\leq 1$ then $M_{\Lambda,\alpha}^{\frac{r}{\beta}} \hookrightarrow L^r(\mathrm{d}\mu)$ holds for any $r\geq 1$;
    \item If $pN>1$ then $M_{\Lambda,\alpha}^{\frac{r}{\beta}} \hookrightarrow L^r(\mathrm{d}\mu)$ holds for any $r>pN$.
\end{enumerate}
Moreover, the embeddings are compact.
\end{corollary}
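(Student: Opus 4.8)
The plan is to apply \cref{thm.interpol-supercritical} to the inclusion operator $T=\imath_{\mu}\colon M_{\Lambda}\to L^r(\mathrm{d}\mu)$, after recording the right restricted-type bounds on each block $F_k$. Set
\[
    \varepsilon_k := \lambda_{n_k}^{\frac{\alpha\beta}{1-\beta}}\left(\int_{[0,1]}t^{p\lambda_{n_k}}\,\mathrm{d}\mu(t)\right)^{\frac{1}{1-\beta}},
\]
so that the hypothesis $A_{p,\Lambda}(\alpha,\beta)$ says precisely that $C_{\varepsilon}=\sum_{k\geq 0}\varepsilon_k<\infty$; in particular $\varepsilon_k\to 0$. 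Fix $r$ in the claimed range. We will exhibit $1<s_0<r<s_1<\infty$ such that $\imath_{\mu}$ is of restricted strong type (hence, by Markov, of restricted weak type) $s_0$ and $s_1$ with $C_{s}(k)\leq C\varepsilon_k^{\frac{1-\beta}{s}}$ for $s\in\{s_0,s_1\}$, which is exactly what \cref{thm.interpol-supercritical} requires; it then delivers strong type $r$, i.e. $M_{\Lambda,\alpha}^{r/\beta}\hookrightarrow L^r(\mathrm{d}\mu)$, together with the compactness of this embedding.

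The core of the argument is a block estimate: for every $k$, every $s>0$ and every $f_k\in F_k$,
\[
    \|f_k\|_{L^s(\mathrm{d}\mu)}\leq C(N,s,\alpha)\,\lambda_{n_k}^{\frac{\alpha\beta}{s}}\left(\int_{[0,1]}t^{\frac{s}{N}\lambda_{n_k}}\,\mathrm{d}\mu(t)\right)^{\frac 1s}\|f_k\|_{L^{s/\beta}(\mathrm{d}\nu_{\alpha-1})}.
\]
To prove it one uses that $F_k$ has dimension $\leq N$: a Nikolskii/Remez-type inequality for Müntz polynomials bounds $\|f_k\|_{L^s(\mathrm{d}\mu)}$ in terms of $\|f_k\|_{L^{s/\beta}(\mathrm{d}\nu_{\alpha-1})}$ and the $L^s(\mathrm{d}\mu)$-size of the monomial $t^{\lambda_{n_k}/N}$, the dimensional loss $N$ reflecting how concentrated near $t=1$ a norm-one Müntz polynomial with at most $N$ terms from $E_k$ can be. One then compares with the denominator through the Beta-integral asymptotics $\int_0^1 t^{u}(1-t)^{\alpha-1}\,\mathrm{d}t\approx_\alpha u^{-\alpha}$ as $u\to\infty$, which give $\|t^{\lambda}\|_{L^{s/\beta}(\mathrm{d}\nu_{\alpha-1})}\approx_{s,\beta,\alpha}\lambda^{-\alpha\beta/s}$ for $\lambda\in E_k$, and finally the monotonicity of $u\mapsto t^{u}$ on $[0,1]$ together with quasi-lacunarity to replace every $\lambda\in E_k$ by its lacunary representative $\lambda_{n_k}\leq\lambda$.

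Granting this, whenever $s\geq pN$ we have $\frac sN\geq p$, hence $\int t^{\frac sN\lambda_{n_k}}\,\mathrm{d}\mu\leq\int t^{p\lambda_{n_k}}\,\mathrm{d}\mu$ and therefore $C_s(k)\leq C(N,s,\alpha)\,\lambda_{n_k}^{\alpha\beta/s}\big(\int t^{p\lambda_{n_k}}\mathrm{d}\mu\big)^{1/s}=C(N,s,\alpha)\,\varepsilon_k^{(1-\beta)/s}$, the desired bound. In case (ii), $pN>1$: given $r>pN$ choose $pN<s_0<r<s_1<\infty$ and apply \cref{thm.interpol-supercritical}. In case (i), $pN\leq 1$: for any $r>1$ choose $\max(1,pN)<s_0<r<s_1<\infty$ (so $s_0>1\geq pN$ and the block estimate applies) and argue as before; the endpoint $r=1$, which can occur only here, is treated by hand — the block estimate with $s=1$ gives $C_1(k)\leq C\varepsilon_k^{1-\beta}$, so for $f=\sum_k f_k\in M_{\Lambda}$,
\[
    \|f\|_{L^1(\mathrm{d}\mu)}\leq\sum_{k\geq 0}C_1(k)\|f_k\|_{L^{1/\beta}(\mathrm{d}\nu_{\alpha-1})}\leq C\left(\sum_{k\geq 0}\varepsilon_k\right)^{1-\beta}\left(\sum_{k\geq 0}\|f_k\|_{L^{1/\beta}(\mathrm{d}\nu_{\alpha-1})}^{1/\beta}\right)^{\beta}\lesssim C_{\varepsilon}^{1-\beta}\|f\|_{L^{1/\beta}(\mathrm{d}\nu_{\alpha-1})},
\]
using Hölder with exponents $\frac{1}{1-\beta},\frac1\beta$ and then \cref{thm:Lp-frames}. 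Compactness at the endpoint follows since $\imath_{\mu}=\sum_k\imath_{\mu}|_{F_k}$ is the operator-norm limit of the finite-rank maps $\sum_{k\leq K}\imath_{\mu}|_{F_k}$, the tail being controlled by $(\sum_{k>K}\varepsilon_k)^{1-\beta}\to 0$; for $r>1$ it is already part of the conclusion of \cref{thm.interpol-supercritical}. I expect the main obstacle to be the block estimate, and specifically identifying the sharp dimensional loss (the exponent $s/N$): this rests on a quantitative Müntz–Remez/Nikolskii inequality that is uniform in $k$, and it is exactly this loss that forces the threshold $pN$ — rather than $p$ — in the statement; a secondary point is keeping all constants $C(N,s,\alpha)$ independent of $k$ and legitimising the block decomposition $f=\sum_k f_k$ with uniformly bounded block projections, both of which follow from \cref{thm:Lp-frames}.
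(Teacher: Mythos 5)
Your argument is correct and follows essentially the same route as the paper: build the block bound $C_s(k)\lesssim\varepsilon_k^{(1-\beta)/s}$ for $s\geq pN$, feed it into \cref{thm.interpol-supercritical} with two exponents bracketing $r$, and treat the endpoint $r=1$ (only possible when $pN\leq1$) by hand via the triangle inequality, Hölder with exponents $\tfrac1\beta,\tfrac1{1-\beta}$, and \cref{thm:Lp-frames}. The one place you are vague is the ``Nikolskii/Remez-type inequality for Müntz polynomials'' with dimensional loss $N$; the paper does not need to reprove anything here — it simply concatenates \cref{thm:muntz-bound} (which gives $|f_k(x)|\lesssim x^{(\lambda_{n_k}+1)/N}\|f_k\|_{L^\infty}$ uniformly in $k$) with \cref{prop:atom} (which gives $\|f_k\|_{L^\infty}\lesssim\lambda_{n_k}^{\alpha\beta/s}\|f_k\|_{L^{s/\beta}(\mathrm{d}\nu_{\alpha-1})}$), and that is exactly your claimed block estimate together with the uniformity in $k$ you flag as a secondary concern. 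So the obstacle you anticipate is already resolved by lemmas recorded in the paper's preliminaries, and the rest of your argument (choice of $s_0,s_1$, the $r=1$ case, and compactness from $\sum_{k>K}\varepsilon_k\to0$) is in line with the paper's outline.
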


We recall that in \cite[Theorem B]{LM24} it is proven that the embedding $M_{\Lambda}^\frac{p}{\beta} \hookrightarrow L^p(\mathrm{d}\mu)$ holds for all $p\geq 1$ if and only if 
\[
    \displaystyle\int_{[0,1]}\Big(\int_{[0,1]}\frac{\mathrm{d}\mu(t)}{(1-\rho t)}\Big)^{\frac{1}{1-\beta}}\,\mathrm{d}\rho<+\infty,
\]
and also equivalently to 
\[
    \sum_{k\geq 0} \lambda_{n_k}^{\frac{\beta}{1-\beta}}\Big(\int_{[0,1]}t^{p\lambda_{n_k}}\,\mathrm{d}\mu(t)\Big)^{\frac{1}{1-\beta}}<+\infty \quad \text{ for all } p\geqslant 1.
\]
The following result is a generalization of \cite[Theorem B]{LM24} to the case $\alpha \neq 1$.

\begin{corollary}\label{th6}
Let $\Lambda$ be a quasi-lacunary and subgeometric sequence. Let $\alpha>0$, $0<\beta<1$ and $1 \leq p < q \leq \infty$. Then, for any $p\geq 1$ the following are equivalent:
\begin{enumerate}[(i)]
    \item It holds $M_{\Lambda,\alpha}^{\frac{p}{\beta}} \hookrightarrow L^p(\mathrm{d}\mu)$;
    \item It holds $\displaystyle \sum_{k\geq 0} \lambda_{n_k}^\frac{\beta \alpha}{1-\beta}\left(\int_{[0,1]}t^{p\lambda_{n_k}}\,\mathrm{d}\mu(t) \right)^{\frac{1}{1-\beta}}<+\infty$.
\end{enumerate}
The embeddings are in fact compact. 
\end{corollary}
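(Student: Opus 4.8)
The plan is to deduce the equivalence from \cref{thm.interpol-supercritical} together with the converse direction, exactly paralleling the structure used to prove \cref{th5} and the earlier subgeometric corollary \cref{coro.subcritical1}. The key observation is that when $\Lambda$ is quasi-lacunary \emph{and subgeometric}, the ratios $\lambda_{n_{k+1}}/\lambda_{n_k}$ are bounded both below and above by constants $q,q'>1$, so the lacunary subsequence $\{\lambda_{n_k}\}$ behaves, up to fixed multiplicative constants, like a geometric sequence; this is what lets us upgrade the one-parameter condition (ii) for a single fixed $p$ to the family of conditions $A_{p',\Lambda}(\alpha,\beta)$ needed to run the interpolation argument at all exponents.

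For the implication (ii)$\Rightarrow$(i), I would first note that the series in (ii) is, by the subgeometric assumption, comparable to the same series with $p$ replaced by any $p'$ in a fixed neighbourhood; more precisely, since each block $E_k$ has at most $N$ elements and $\lambda_{n_{k+1}} \asymp \lambda_{n_k}$, one checks that $\int t^{p\lambda_{n_k}}\,\d\mu(t)$ controls $\int t^{p'\lambda_{n_k}}\,\d\mu(t)$ up to constants depending only on $p,p',q,q'$ for $p' $ in a compact interval, and hence $A_{p,\Lambda}(\alpha,\beta)$ implies $A_{p',\Lambda}(\alpha,\beta)$ for a range of $p'$ around the given $p$ (indeed for all $p'>0$ after iterating). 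Then, given the target exponent $p$ in (i), one chooses $p_0<p<p_1$ and applies \cref{th5} (or directly \cref{thm.interpol-supercritical}) with the block constants $\varepsilon_k$ read off from the $A_{p_i,\Lambda}(\alpha,\beta)$ series: the restricted weak type estimates for $\imath_\mu$ on each $F_k$ follow from the explicit computation $\|t^{\lambda}\|_{L^{p_i}(\d\mu)}$ versus $\|t^\lambda\|_{L^{p_i/\beta}(\d\nu_{\alpha-1})}$ together with the $L^p$-frame decoupling of \cref{thm:Lp-frames}, and the hypotheses of \cref{thm.interpol-supercritical} are met with $\sum_k \varepsilon_k<\infty$. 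This yields strong type $p$, i.e. the embedding in (i), and moreover compactness.

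For the converse (i)$\Rightarrow$(ii), I would test the embedding on the single functions $f=t^{\lambda_{n_k}}\in M_{\Lambda,\alpha}$ and on suitable normalized combinations: the embedding inequality $\|t^{\lambda_{n_k}}\|_{L^p(\d\mu)} \lesssim \|t^{\lambda_{n_k}}\|_{L^{p/\beta}(\d\nu_{\alpha-1})} \asymp \lambda_{n_k}^{-\alpha\beta/p}$ (this last asymptotic being a standard beta-integral estimate) gives a pointwise bound on each term of the series, but this alone only yields boundedness of the terms, not summability. To get summability one must use that the embedding holds for \emph{all} $f\in M_{\Lambda,\alpha}$ simultaneously: take $f_k$ supported in $F_k$ with suitably chosen coefficients $c_k$, invoke the decoupling of \cref{thm:Lp-frames} to pass from $\|\sum_k c_k f_k\|^p$ to $\sum_k |c_k|^p\|f_k\|^p$ on both sides, and then optimize the choice of $c_k$ (a duality/Hölder argument with exponent conjugate to $\tfrac{1}{1-\beta}$) to force the $A_{p,\Lambda}(\alpha,\beta)$ series to converge — this is the mechanism already used to prove the necessity direction referenced in \cref{sec.necessity}, and I would cite that argument.

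The main obstacle I anticipate is the (ii)$\Rightarrow$(i) step of transferring the condition across exponents: one must verify carefully that the subgeometric hypothesis genuinely allows $A_{p,\Lambda}(\alpha,\beta)$ at one exponent to imply it at the exponents $p_0<p<p_1$ required by the interpolation scheme, keeping track of how the constants degrade with the block size $N$ and the ratio bounds $q,q'$. Once that comparison is in hand, both directions reduce to the already-established machinery — \cref{thm.interpol-supercritical}, \cref{thm:Lp-frames}, and the necessity argument of \cref{sec.necessity} — and the compactness assertion comes for free from the compactness clause of \cref{thm.interpol-supercritical}.
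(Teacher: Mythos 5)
Your high-level plan matches the paper's: reduce (ii)$\Rightarrow$(i) to \cref{th5} after showing that the subgeometric hypothesis lets one transfer the condition $A_{p,\Lambda}(\alpha,\beta)$ across exponents, and get (i)$\Rightarrow$(ii) by a duality argument in the spirit of \cref{sec.necessity}. However, there are two points where your description, as written, would not go through.

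First, in (i)$\Rightarrow$(ii) you propose to ``pass from $\|\sum_k c_k f_k\|^p$ to $\sum_k |c_k|^p\|f_k\|^p$ on both sides'' by invoking \cref{thm:Lp-frames}. That decoupling is a theorem about $L^{p/\beta}(\mathrm{d}\nu_{\alpha-1})$ only; for a general positive Borel measure $\mu$ one does \emph{not} have $\bigl\|\sum_k c_k f_k\bigr\|_{L^p(\mathrm{d}\mu)}^p \simeq \sum_k |c_k|^p\|f_k\|_{L^p(\mathrm{d}\mu)}^p$, and in fact such an equivalence would essentially encode the conclusion you are trying to prove. The paper (and the argument in \cref{sec.necessity} that you cite) uses instead the one-sided pointwise inequality $\sum_k x_k^p \leq \bigl(\sum_k x_k\bigr)^p$ for nonnegative $x_k$ and $p\geq 1$, applied to $x_k = a_k\lambda_{n_k}^{\alpha\beta/p}t^{\lambda_{n_k}}$. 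Integrating this against $\mathrm{d}\mu$ gives $\sum_k a_k^p\lambda_{n_k}^{\alpha\beta}\int t^{p\lambda_{n_k}}\,\mathrm{d}\mu \leq \|f\|_{L^p(\mathrm{d}\mu)}^p$, after which (i) and \cref{thm:Lp-frames} (used \emph{only} on the $\nu_{\alpha-1}$ side) bound this by $\|\{a_k^p\}\|_{\ell^{1/\beta}}$, and the duality $\ell^{1/\beta}\times\ell^{1/(1-\beta)}$ finishes. If you replace your decoupling step with this one-sided inequality, the direction is fine.

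Second, in (ii)$\Rightarrow$(i) your stated comparison ``$\int t^{p\lambda_{n_k}}\,\mathrm{d}\mu$ controls $\int t^{p'\lambda_{n_k}}\,\mathrm{d}\mu$ for $p'$ in a compact interval'' is only automatic for $p'\geq p$; for $p'<p$ (the case you actually need, since you must reach $A_{\varepsilon,\Lambda}$ with $\varepsilon N<p$ before applying \cref{th5}) the inequality goes the wrong way term-by-term. What does work is an \emph{index shift}: since $q\leq\lambda_{n_{k+1}}/\lambda_{n_k}\leq q'$, choosing $p'\geq p/q$ gives $p'\lambda_{n_{k+1}}\geq p\lambda_{n_k}$ and hence $\int t^{p'\lambda_{n_{k+1}}}\,\mathrm{d}\mu\leq\int t^{p\lambda_{n_k}}\,\mathrm{d}\mu$, while $\lambda_{n_{k+1}}^{\alpha\beta/(1-\beta)}\lesssim\lambda_{n_k}^{\alpha\beta/(1-\beta)}$; this bounds the $(k+1)$-st term of the $A_{p',\Lambda}$ series by the $k$-th term of the $A_{p,\Lambda}$ series, and iterating reaches all $p'>0$. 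The paper achieves the same transfer by a different route: using the subgeometric hypothesis to invoke \cite[Lemma 2.10]{GaLe}, comparing $\int(1-\rho t)^{-\eta}\,\mathrm{d}\mu$ with $\int(1-(\rho t)^{\varepsilon})^{-\eta}\,\mathrm{d}\mu$, and reading off that all the $A_{p',\Lambda}(\alpha,\beta)$ conditions are equivalent once $\eta=\alpha\beta+\kappa(1-\beta)$ is chosen and the $L_\rho^{1/(1-\beta)}(\mathrm{d}\nu_{\kappa-1})$ norm of both sides is computed. Your shift-and-compare route is more elementary, but you need to spell out the index shift, since as written the claim is false.
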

 
\section{Preliminaries to the proof}

In this section we recall some important results that we should use throughout the proofs.  

\subsection{Results on M{\"u}ntz spaces}

One of the most useful tool at our disposition is the following, which is a generalization of a theorem of Gurariy--Macaev \cite{GM}. We recall that $F_{k}=\operatorname{Span}\{t^{\lambda}, \lambda \in E_k \}$.

\begin{theorem}[\cite{LM24}, Theorem C]\label{thm:Lp-frames} Let $p\in [1,\infty)$, $\alpha > 0$ and $\mathrm{d}\nu_{\alpha-1} = (1-x)^{\alpha-1}\,\mathrm{d}x$. Then there exists two constants $C_1,C_2 >0$ such that for all $f_k \in F_k$ there holds 
    \[
        C_1\left(\sum_{k\geqslant 0}\|f_k\|_{L^p(\mathrm{d}\nu_{\alpha-1})}^p\right)^{\frac{1}{p}}\leq \left\|\sum_{k\geqslant 0}f_k\right\|_{L^p(\mathrm{d}\nu_{\alpha-1})}\leq C_2\left(\sum_{k\geqslant 0}\|f_k\|_{L^p(\mathrm{d}\nu_{\alpha-1})}^p\right)^{\frac{1}{p}}. 
    \]
\end{theorem}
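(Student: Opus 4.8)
The plan is to reduce the two-sided estimate to a scale-by-scale localisation argument for exponential sums with geometrically separated frequencies. First I would substitute $s=-\log x\in[0,\infty)$, under which $t^{\lambda}$ becomes $e^{-\lambda s}$ and $\|f\|_{L^p(\d\nu_{\alpha-1})}^p=\int_0^\infty|f(e^{-s})|^p w_\alpha(s)\,\d s$ with $w_\alpha(s):=(1-e^{-s})^{\alpha-1}e^{-s}$, which is comparable to $s^{\alpha-1}$ as $s\to0^+$ and decays exponentially as $s\to\infty$; a Laplace/Beta-function asymptotic then gives the normalisation $\|t^\lambda\|_{L^p(\d\nu_{\alpha-1})}^p=B(p\lambda+1,\alpha)\asymp\Gamma(\alpha)(p\lambda)^{-\alpha}$. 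Writing $\mu_k:=\max E_k$, quasi-lacunarity makes the blocks $F_k$ occupy pairwise disjoint, geometrically separated ranges of scales, while $\#E_k\le N$ keeps each block low-dimensional; using these, together with a $k$-uniform finite-dimensional norm comparison on $F_k$ (obtained via the rescaling $t^\lambda\mapsto t^{\lambda/\mu_k}$, a compactness argument, and the subgeometric assumption that keeps the rescaled exponents in a fixed compact set), I would record that every $f_k\in F_k$ carries a definite fraction of its $L^p(\d\nu_{\alpha-1})$-mass on the annulus $I_k:=\{x:1-1/\mu_k\le x\le 1-1/\mu_{k+1}\}$, uniformly in $k$ and $f_k$. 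Since $\{I_k\}_{k\ge0}$ partitions $[0,1)$, it then suffices to control $\sum_j\int_{I_j}\big|\sum_k f_k\big|^p\,\d\nu_{\alpha-1}$.

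For the upper bound, on a fixed annulus $I_j$ one has $x\asymp1-1/\mu_j$, so a lower-frequency block ($k<j$) has $x^{\mu_k}\asymp1$ and $|f_k|$ comparable to its peak value (of order $\mu_k^{\alpha/p}\|f_k\|_{L^p(\d\nu_{\alpha-1})}$), whereas a higher-frequency block ($k>j$) contributes a super-exponentially small factor since $x^{\mu_k}\le e^{-\mu_k/\mu_j}$ with $\mu_k/\mu_j\ge q^{k-j}$. Feeding these bounds into $\int_{I_j}\big|\sum_k f_k\big|^p\,\d\nu_{\alpha-1}$, and using that $(1-x)^{\alpha-1}$ integrated over $I_j$ contributes $\asymp\mu_j^{-\alpha}$, I obtain $\int_{I_j}\big|\sum_k f_k\big|^p\,\d\nu_{\alpha-1}\lesssim\big(\sum_k c_{jk}\|f_k\|_{L^p(\d\nu_{\alpha-1})}\big)^p$, where the nonnegative matrix $(c_{jk})$ has bounded row and column sums and decays geometrically below the diagonal (like $q^{-\alpha(j-k)/p}$) and super-exponentially above it. Summing over $j$ and applying Young's inequality for convolution (or a Schur test) on $\ell^p$ yields $\sum_j\int_{I_j}\big|\sum_k f_k\big|^p\,\d\nu_{\alpha-1}\lesssim\sum_k\|f_k\|_{L^p(\d\nu_{\alpha-1})}^p$, i.e. the constant $C_2$.

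The lower bound is the step I expect to be the main obstacle. On $I_j$ the higher-frequency blocks are again super-exponentially small, but the lower-frequency blocks are \emph{not} negligible — each is comparable to its own peak, which is merely geometrically (not super-exponentially) smaller than that of $f_j$. The remedy is an oscillation estimate: on $I_j$ the aggregate $\sum_{k<j}f_k(e^{-s})$ is slowly varying in $s$ (its $s$-derivative carries an extra factor $\mu_k/\mu_j$), whereas $f_j(e^{-s})$ changes by a definite multiplicative amount across $I_j$ and, being a nonzero exponential sum with large exponents, cannot be close to a constant there; so comparing $\sum_k f_k$ with its $I_j$-average and using a Poincar\'e-type inequality recovers $\int_{I_j}\big|\sum_k f_k\big|^p\,\d\nu_{\alpha-1}\gtrsim\|f_j\|_{L^p(\d\nu_{\alpha-1})}^p$ up to an error that is a \emph{geometrically} decaying average of the $\|f_k\|_{L^p(\d\nu_{\alpha-1})}$, after which summation in $j$ and Young's inequality close the estimate. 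The genuinely delicate point, where I expect most of the work to go, is keeping every constant dependent only on $q,q',\alpha,p,N$ when the separation $q$ is close to $1$: the oscillation of $f_j$ across $I_j$ and the geometric decay of the error matrix both degrade, and to see that the lower bound nonetheless survives one needs an Ingham-type inequality for the $\log q$-separated frequencies $\{\log\lambda_{n_k}\}_k$ — equivalently, the normalised system $(t^{\lambda}/\|t^{\lambda}\|)$ is a Riesz system because it arises by sampling a positive-definite kernel on a separated set. (Alternatively one can bypass this via duality: the two inequalities say exactly that $(f_k)_k\mapsto\sum_k f_k$ is an isomorphism of $\big(\bigoplus_k F_k\big)_{\ell^p}$ onto its range in $L^p(\d\nu_{\alpha-1})$, so the lower bound for exponent $p$ follows from the already-established upper bound for the conjugate exponent $p'$ applied to the biorthogonal coefficient functionals.) The remaining ingredients — the Beta asymptotics, the bookkeeping in the two matrix estimates, and the uniform finite-dimensional comparison on the blocks — are routine.
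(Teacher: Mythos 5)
The paper does not prove this statement; it is imported verbatim as Theorem~C of \cite{LM24}, so there is no internal proof to compare against. Judged on its own terms, your sketch has the right global shape for the upper bound (annulus localisation plus a Schur test on a matrix that decays geometrically below the diagonal and super-exponentially above it), but it contains two genuine gaps that I do not think are merely bookkeeping.

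The central gap is the ``$k$-uniform finite-dimensional norm comparison on $F_k$'' obtained by rescaling $t^\lambda\mapsto t^{\lambda/\mu_k}$ and invoking compactness together with the subgeometric hypothesis. The theorem, as stated and as used throughout the paper (for instance in \cref{rem:strong}, in the proof of \cref{thm.main-marcinkiewicz}, and in \cref{coro.embeddings-subcritical}), is only supposed to require quasi-lacunarity; subgeometricity is an \emph{extra} hypothesis imposed separately in \cref{coro.subcritical1} and \cref{th6}. Under quasi-lacunarity alone there is no upper bound on $\lambda_{n_{k+1}}/\lambda_{n_k}$, so a single block $E_k$ can straddle an arbitrarily long range of scales; after rescaling by $\mu_k=\max E_k$ the smallest exponent $\lambda_{n_k+1}/\mu_k$ can be arbitrarily close to $0$, the rescaled exponents do \emph{not} stay in a fixed compact subset of $(0,1]$, and the intended consequence — that every $f_k\in F_k$ carries a definite fraction of its $L^p(\d\nu_{\alpha-1})$-mass on $I_k=[1-1/\mu_k,1-1/\mu_{k+1}]$ — is simply false: the low-exponent monomials in $E_k$ concentrate near $x=1-1/\lambda_{n_k+1}$, which may lie far to the left of $I_k$. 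The mechanism that actually makes the block-wise localisation uniform is not compactness but the Müntz-polynomial pointwise bound recorded as \cref{thm:muntz-bound} (Corollary~8.1.2 of \cite{GL}), whose constant depends on $q$ and $N$ only and is insensitive to the internal geometry of $E_k$. Any correct proof of \cref{thm:Lp-frames} at this level of generality must route through estimates of that type (or an equivalent) rather than a rescaling/compactness argument.

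The proposed duality shortcut for the lower bound is also not valid as written. The lower bound for exponent $p$ says the synthesis map $S_p:\bigl(\bigoplus_k F_k\bigr)_{\ell^p}\to L^p(\d\nu_{\alpha-1})$ is bounded below, equivalently that its adjoint $S_p^*$ is onto; but $S_p^*$ is the \emph{restriction} map $g\mapsto (g|_{F_k})_k$ from $(L^p)^*$ into $\bigl(\bigoplus_k F_k^*\bigr)_{\ell^{p'}}$, not the synthesis map $S_{p'}$ for the conjugate exponent. Boundedness of $S_{p'}$ (which is what the upper bound for $p'$ gives) is a different statement and does not imply surjectivity of $S_p^*$; in particular $L^{p'}_\Lambda$ is not the dual of $L^p_\Lambda$ — the dual is the quotient of $L^{p'}$ by the annihilator, and the whole point of the Gurariy--Macaev phenomenon is that this quotient can be described via the blocks, which is what one is trying to prove. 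The oscillation/Poincaré argument in your main route is more promising, but as stated it again implicitly assumes that each $f_j$ oscillates by a definite amount across $I_j$, which fails for the same reason as above when $E_j$ spans many scales. You would need to (i) redefine the annuli so that $I_j$ covers the full scale range of $E_j$ (e.g. $[1-1/\lambda_{n_j},1-1/\lambda_{n_{j+1}}]$) and (ii) replace the compactness input by the Müntz-specific bounds, after which the Schur-test bookkeeping can plausibly be closed.
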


The above result should be understood as a complete decoupling of $L^p(\mathrm{d\nu_{\alpha - 1}})$ norms of the atoms $f_k$ of a function $f = \sum_{k\geq 0}f_k \in M_{\Lambda}$.

Given a function $f_k\in F_k$, one can roughly estimate $f_k$ by a monomial $x^{\frac{\lambda_k+1}{N}}$:

\begin{theorem}[\cite{GL}, Corollary 8.1.2 ]\label{thm:muntz-bound} Let $\Lambda$ be a quasi-lacunary sequence. 
    Let $k \geqslant 0$. Then for any $f_k \in F_{k}$ and for all $x \in [0,1]$, there holds 
    \[
        \vert f_k(x) \vert \lesssim x^{\frac{\lambda_{n_{k}}+1}{N}}\|f_k\|_{L^{\infty}},
    \]
    where the implicit constant depends only on $q, N$, but not on $k$, nor $f_k$. 
\end{theorem}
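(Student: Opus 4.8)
The plan is to establish the estimate
\[
    |f_k(x)|\lesssim x^{\lambda_{n_k+1}/N}\,\|f_k\|_{L^\infty[0,1]},\qquad x\in[0,1],
\]
with implicit constant depending only on $N$; here $\lambda_{n_k+1}=:\mu_1$ is the smallest exponent occurring in $F_k$ (recall $E_k=\{\lambda_{n_k+1},\dots,\lambda_{n_{k+1}}\}$), and this is the asserted bound. First I would note that for each fixed $k$ the inequality holds with \emph{some} finite constant, because $f\mapsto\|x^{-\mu_1/N}f\|_{L^\infty[0,1]}$ and $f\mapsto\|f\|_{L^\infty[0,1]}$ are two norms on the finite-dimensional space $F_k$; so the whole point is to produce a constant that does not depend on $k$. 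Fix $k$, normalise $\|f_k\|_{L^\infty[0,1]}=1$, and write $f_k(x)=\sum_{j=1}^m a_j x^{\mu_j}$ with $\mu_1<\cdots<\mu_m$ and $m=\#E_k\le N$. The endpoints $x\in\{0,1\}$ are trivial, so fix $x\in(0,1)$.

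Set $w:=x^{1/m}\in(0,1)$ and interpolate $f_k$ at the $m$ distinct geometric nodes $t_a:=w^{a-1}$, $a=1,\dots,m$, which satisfy $x<t_m<\cdots<t_1=1$, so that $x$ lies strictly below every node. Since $\{x^{\mu_1},\dots,x^{\mu_m}\}$ is a Chebyshev system on $(0,\infty)$ — the kernel $(s,\mu)\mapsto s^{\mu}$ being totally positive — there are unique Lagrange functions $\ell_a\in F_k$ with $\ell_a(t_b)=\delta_{ab}$, and $f_k=\sum_{a=1}^m f_k(t_a)\ell_a$, whence $|f_k(x)|\le\sum_{a=1}^m|\ell_a(x)|$. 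By Cramér's rule $\ell_a(x)$ is the ratio of the generalized Vandermonde determinant $\det(t_b^{\mu_c})_{b,c}$ to the same determinant with the node $t_a$ replaced by $x$.

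The main step is to evaluate these ratios. Factoring $s^{\mu_1}$ out of each row turns both determinants into generalized Vandermonde determinants in the variables $r_c:=w^{\mu_c-\mu_1}\in(0,1]$ (with $r_1=1$), but now with \emph{integer} exponent patterns: $\{0,1,\dots,m-1\}$ for the denominator, and $\{0,1,\dots,m\}\setminus\{a-1\}$ for the numerator of $\ell_a$. The Jacobi bialternant identity expresses each of these as $\prod_{i<j}(r_i-r_j)$ times a Schur polynomial, and the pattern $\{0,\dots,m\}\setminus\{a-1\}$ corresponds to the partition $(1^{\,m-a+1})$, whose Schur polynomial is the elementary symmetric polynomial $e_{m-a+1}$. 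Cancelling the Vandermonde factors leaves the exact identity
\[
    |\ell_a(x)|=w^{\mu_1(m-a+1)}\,e_{m-a+1}(r_1,\dots,r_m),\qquad a=1,\dots,m.
\]
Since $0<r_c\le 1$ we get $e_{m-a+1}(r)\le\binom{m}{a-1}\le 2^N$, and since $m-a+1\ge 1$ and $w\le 1$ we get $w^{\mu_1(m-a+1)}\le w^{\mu_1}=x^{\mu_1/m}$; summing over $a$ gives $\sum_a|\ell_a(x)|\le N2^N x^{\mu_1/m}\le N2^N x^{\mu_1/N}$ (using $m\le N$ and $x\le 1$). Hence $|f_k(x)|\le N2^N x^{\mu_1/N}\|f_k\|_{L^\infty[0,1]}$ with the uniform constant $N2^N$, which together with the reduction of the first paragraph proves the claim.

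I expect the delicate part to be purely the bookkeeping with the generalized Vandermonde / bialternant formulas — in particular confirming that the exponent pattern $\{0,\dots,m\}\setminus\{a-1\}$ really yields $e_{m-a+1}$, and that the various sign ambiguities are immaterial once one passes to moduli. Conceptually, the reason one is forced to go through this explicit interpolation identity rather than a softer argument is that uniformity in $k$ genuinely fails at the level of coefficients: within one block the exponents $\mu_1<\cdots<\mu_m$ may be arbitrarily close together, so the ratio of the $\ell^1$-norm of the coefficients $(a_j)$ to $\|f_k\|_{L^\infty[0,1]}$ is unbounded as $k\to\infty$, and it is only the cancellation encoded in the Lagrange--Schur formula that recovers a bound depending on nothing beyond the number $m\le N$ of terms in the block.
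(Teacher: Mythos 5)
The paper does not give its own proof of this statement --- it is quoted from Gurariy--Lusky \cite{GL} (Corollary~8.1.2) --- so there is no internal argument to compare against; the question is simply whether your proof is sound, and it is. The Lagrange representation $f_k=\sum_a f_k(t_a)\ell_a$ at the geometric nodes $t_a=x^{(a-1)/m}$ is legitimate because $\{x^{\mu_1},\dots,x^{\mu_m}\}$ is a Descartes--Chebyshev system on $(0,\infty)$; the identity holds for all $x$, and at the nodes $t_a\in[0,1]$ one has $|f_k(t_a)|\le\|f_k\|_{L^\infty}$. Factoring $t_b^{\mu_1}$ out of each row of the numerator and denominator determinants defining $\ell_a(x)$, and invoking the bialternant formula with exponent pattern $\{0,\dots,m\}\setminus\{a-1\}$ (partition $(1^{m-a+1})$, Schur polynomial $e_{m-a+1}$), does give $|\ell_a(x)|=w^{\mu_1(m-a+1)}\,e_{m-a+1}(r_1,\dots,r_m)$ with $r_c=w^{\mu_c-\mu_1}\in(0,1]$ and $w=x^{1/m}$; the elementary bounds $e_{m-a+1}(r)\le\binom{m}{m-a+1}\le 2^N$ and $w^{\mu_1(m-a+1)}\le w^{\mu_1}=x^{\mu_1/m}\le x^{\mu_1/N}$ (using $m\le N$ and $0\le x\le 1$) then yield $\sum_a|\ell_a(x)|\le N2^N x^{\mu_1/N}$, which is the claim. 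Two small remarks. First, your constant $N2^N$ depends only on $N$ and not on the lacunarity parameter $q$, so you in fact prove a slightly stronger uniformity than the theorem asserts. Second, you read the exponent in the statement as $\lambda_{n_k+1}/N$, the smallest exponent occurring in the block $E_k$, rather than the literal typeset $(\lambda_{n_k}+1)/N$; this reading is what your construction naturally produces and is consistent with how the estimate is invoked elsewhere in the paper, so it is almost certainly the intended one.
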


When restricted to functions $f_k\in F_k$, one can exploit the relationship between different $L^p$ norms of $f_k$, using the following: 

\begin{proposition}[Generalized Berstein estimates, \cite{LM24}, Proposition 2.5]\label{prop:atom} 
    Let $\beta >0$, $\mu \in \mathcal{M}_{x^{\beta}}$ and $\alpha >-1$. There exists $k_0 > 0$ such that for all $k\geqslant k_0$, for all $f_{k} \in F_{k}$, and for any $p,q \geqslant 1$ there holds 
    \[
        \|f_{k}\|_{L^{p}(\mathrm{d}\mu)}\lesssim \lambda_{n_{k}}^{\delta}\|f_{k}\|_{L^{q}((1-x)^{\alpha}\mathrm{d}x)},	
    \] 
    with $\delta=\frac{1+\alpha}{q}-\frac{\beta}{p}$, and where the implicit constant only depends on $p$,$q$, $\alpha$, $\beta$ and also on $q, N$.
\end{proposition}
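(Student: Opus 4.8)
The plan is to factor the estimate through the sup norm, that is, to prove separately the two bounds
\[
    \|f_k\|_{L^p(\mathrm{d}\mu)} \lesssim \lambda_{n_k}^{-\beta/p}\,\|f_k\|_{L^\infty([0,1])}
    \qquad\text{and}\qquad
    \|f_k\|_{L^\infty([0,1])} \lesssim \lambda_{n_k}^{(1+\alpha)/q}\,\|f_k\|_{L^q((1-x)^\alpha\mathrm{d}x)},
\]
whose product is exactly the claimed inequality, since $\delta=\frac{1+\alpha}{q}-\frac{\beta}{p}$. Throughout I would use that, by quasi-lacunarity, every exponent $\lambda\in E_k$ is comparable to $\lambda_{n_k}$, with constants depending only on $q$ and $N$.

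For the first bound I would start from the pointwise estimate of \cref{thm:muntz-bound}, $|f_k(x)|\lesssim x^{(\lambda_{n_k}+1)/N}\|f_k\|_{L^\infty}$, which reduces matters to a single monomial moment estimate: if $\mu\in\mathcal{M}_{x^{\beta}}$ then $\int_{[0,1]}x^m\,\mathrm{d}\mu(x)\lesssim m^{-\beta}$ for $m$ large. This follows by writing $x^m=\int_0^x m t^{m-1}\,\mathrm{d}t$, applying Tonelli to get $\int_{[0,1]}x^m\,\mathrm{d}\mu=\int_0^1 m t^{m-1}\,\mu((t,1])\,\mathrm{d}t\lesssim\int_0^1 m t^{m-1}(1-t)^\beta\,\mathrm{d}t=m\,B(m,\beta+1)$, and using $m\,B(m,\beta+1)=\Gamma(\beta+1)\frac{\Gamma(m+1)}{\Gamma(m+\beta+1)}\sim\Gamma(\beta+1)\,m^{-\beta}$. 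Taking $m=p(\lambda_{n_k}+1)/N$ (which is $\gtrsim\lambda_{n_k}$ uniformly in $p\geq 1$) then yields $\|f_k\|_{L^p(\mathrm{d}\mu)}^p\lesssim\|f_k\|_{L^\infty}^p\,\lambda_{n_k}^{-\beta}$, valid for all $k\geq k_0$, where $k_0$ only accounts for the range of validity of the $\Gamma$-asymptotics and is independent of $p$ and of $f_k$.

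For the second bound --- a \emph{Nikolskii-type} reverse inequality --- I would again use \cref{thm:muntz-bound}: since $|f_k(x)|\lesssim x^{\lambda_{n_k}/N}\|f_k\|_{L^\infty}$, there is $\varepsilon_k\asymp\lambda_{n_k}^{-1}$ with $\|f_k\|_{L^\infty([0,1-\varepsilon_k])}\leq\tfrac12\|f_k\|_{L^\infty}$, so that $M:=\|f_k\|_{L^\infty}$ is attained at some $x_0\in[1-\varepsilon_k,1]\subset[\tfrac12,1]$. Next I would invoke a Bernstein-type inequality for $F_k$: by Newman's inequality for M\"untz polynomials (see, e.g., \cite{BE}) together with $\sum_{\lambda\in E_k}\lambda\lesssim N\lambda_{n_k}$, one gets $\|xf_k'(x)\|_{L^\infty([0,1])}\lesssim\lambda_{n_k}\|f_k\|_{L^\infty}$, hence $\|f_k'\|_{L^\infty([1/2,1])}\lesssim\lambda_{n_k}\|f_k\|_{L^\infty}$. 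Integrating this derivative bound from $x_0$ shows that $|f_k|\geq\tfrac12 M$ on a subinterval $I$ of $[\tfrac12,1]$ with right endpoint $x_0$ and length $\asymp\lambda_{n_k}^{-1}$. Since $x_0\geq 1-\varepsilon_k$ with $\varepsilon_k,|I|\asymp\lambda_{n_k}^{-1}$, an elementary computation of $\int_I(1-x)^\alpha\,\mathrm{d}x$ --- treating the cases $\alpha\geq 0$ and $-1<\alpha<0$ separately --- gives $\int_I(1-x)^\alpha\,\mathrm{d}x\gtrsim\lambda_{n_k}^{-(1+\alpha)}$, whence $\|f_k\|_{L^q((1-x)^\alpha\mathrm{d}x)}^q\geq(\tfrac12 M)^q\int_I(1-x)^\alpha\,\mathrm{d}x\gtrsim M^q\,\lambda_{n_k}^{-(1+\alpha)}$, as desired.

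The main obstacle is the second bound, and in particular the Bernstein--Markov inequality $\|f_k'\|_{L^\infty([1/2,1])}\lesssim\lambda_{n_k}\|f_k\|_{L^\infty}$: unlike the first bound it genuinely requires that the exponents in each block $E_k$ be comparable to $\lambda_{n_k}$ (otherwise, testing on $x^{\max E_k}$, the statement fails), and it rests on a nontrivial external input. The remaining steps --- the Tonelli/Beta-function computation, the localization of the maximum, and the elementary lower bound for $\int_I(1-x)^\alpha\,\mathrm{d}x$ in the two regimes $\alpha\geq 0$ and $\alpha<0$ --- are routine; the only genuine care needed is to verify that the threshold $k_0$ can be chosen independently of $p$, $q$, and $f_k$.
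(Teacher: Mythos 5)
The paper cites this proposition from \cite{LM24} without reproducing a proof, so there is no internal argument here to compare against; I assess the proposal on its own merits.

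The strategy of factoring through $L^\infty$ is natural, and your first half is correct: \cref{thm:muntz-bound} reduces the $L^p(\mathrm{d}\mu)$-versus-$L^\infty$ bound to a monomial moment estimate, the Tonelli/Beta-function computation giving $\int_{[0,1]} x^m\,\mathrm{d}\mu\lesssim m^{-\beta}$ for $\mu\in\mathcal{M}_{x^{\beta}}$ is sound, and taking $m=p(\lambda_{n_k}+1)/N$ gives a constant uniform in $p\geqslant 1$ and a threshold $k_0$ independent of $p$ and $f_k$, as you claim.

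The Nikolskii-type reverse inequality, however, contains a genuine gap. You assert near the beginning that ``by quasi-lacunarity, every exponent $\lambda\in E_k$ is comparable to $\lambda_{n_k}$'' and from this deduce $\sum_{\lambda\in E_k}\lambda\lesssim N\lambda_{n_k}$, hence via Newman $\|xf_k'\|_{L^\infty}\lesssim\lambda_{n_k}\|f_k\|_{L^\infty}$. But the paper's definition of quasi-lacunary only requires $\#E_k\leqslant N$ and $\lambda_{n_{k+1}}/\lambda_{n_k}>q$; it places no upper bound on the within-block ratio, so $\max E_k=\lambda_{n_{k+1}}$ can be arbitrarily much larger than $\lambda_{n_k}$ unless the \emph{subgeometric} condition is also assumed. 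In that regime Newman's inequality only gives $\|xf_k'\|_{L^\infty}\lesssim\lambda_{n_{k+1}}\|f_k\|_{L^\infty}$, the level set $I$ has length only $\simeq\lambda_{n_{k+1}}^{-1}$, and for $-1<\alpha<0$ the lower bound on $\int_I(1-x)^\alpha\,\mathrm{d}x$ degrades to $\lambda_{n_{k+1}}^{-1}\lambda_{n_k}^{-\alpha}$, which falls short of the required $\lambda_{n_k}^{-(1+\alpha)}$.

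To your credit, your closing remark already identifies within-block comparability as ``genuinely required,'' and you are right: testing on $f_k(x)=x^{\lambda_{n_{k+1}}}$ with $\delta>0$ shows that the \emph{statement itself} cannot hold without it. So your structural diagnosis is accurate, but the claim that this comparability follows from the paper's quasi-lacunarity is not; you should either invoke the subgeometric hypothesis explicitly, or verify that the block decomposition used in \cite{LM24} (and in \cite{GL}) is constructed so that within-block exponents are comparable to $\lambda_{n_k}$ with constants depending only on $q,N$ --- plausible, but it must be checked rather than asserted. Given that Bernstein input, the remaining steps --- localizing the maximum near $1$ at scale $\lambda_{n_k}^{-1}$, deducing a level set of length $\simeq\lambda_{n_k}^{-1}$, and the two-case lower bound on $\int_I(1-x)^\alpha\,\mathrm{d}x$ --- are all correct.
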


\subsection{An elementary interpolation inequality}

\begin{lemma}\label{interpolbis}
Assume that $T$ is a sublinear operator of restricted weak type $p$ and $q$ for some $1\leq p<q<\infty$, with constants $C_p(k)$ and $C_q(k)$. Let $r\in(p,q)$, and write $\frac{1}{r}=\frac{1-\theta}{p}+\frac{\theta}{q}$ for some $\theta\in(0,1)$.

Then $T$ is of restricted strong type $r$ with constant $C_r(k)\leqslant C_p^{1-\theta}(k)C_q^{\theta}(k)$. 
\end{lemma}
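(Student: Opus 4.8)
The statement is the single-block version of the Marcinkiewicz interpolation theorem: for each fixed index $k$, the subspace $F_k$ is finite-dimensional and all relevant norms are comparable, so the classical proof applies verbatim, with constants tracked to yield $C_r(k) \leqslant C_p^{1-\theta}(k)C_q^{\theta}(k)$. I would carry out the standard layer-cake argument, being careful that nothing density-related is needed since $F_k$ itself is the domain (no closure, no approximation).

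\begin{proof}
Fix $k\geqslant 0$ and $f_k\in F_k$; write $N_p := \|f_k\|_{L^{p/\beta}(\mathrm{d}\nu_{\alpha-1})}$ and $N_q := \|f_k\|_{L^{q/\beta}(\mathrm{d}\nu_{\alpha-1})}$, both finite since $f_k$ is a finite linear combination of monomials on $[0,1]$. We may assume $N_p, N_q > 0$, else $f_k = 0$ and there is nothing to prove. By the layer-cake formula,
\[
    \|Tf_k\|_{L^r(\mathrm{d}\mu)}^r = r\int_0^{\infty} \lambda^{r-1}\,\mu\big(|Tf_k| > \lambda\big)\,\mathrm{d}\lambda.
\]
For a threshold $A>0$ to be chosen, split the integral at $\lambda = A$. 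On $(0,A)$ use the restricted weak type $p$ bound $\mu(|Tf_k|>\lambda) \leqslant C_p^p(k)\lambda^{-p}N_p^p$; on $(A,\infty)$ use the restricted weak type $q$ bound $\mu(|Tf_k|>\lambda) \leqslant C_q^q(k)\lambda^{-q}N_q^q$. The first piece contributes $r\int_0^A \lambda^{r-1-p}\,\mathrm{d}\lambda \cdot C_p^p(k)N_p^p = \frac{r}{r-p}A^{r-p}C_p^p(k)N_p^p$ (note $r>p$), and the second contributes $\frac{r}{q-r}A^{r-q}C_q^q(k)N_q^q$ (note $q>r$). Optimizing in $A$, i.e. choosing $A$ so that $A^p = \big(C_q^q(k)N_q^q\big)/\big(C_p^p(k)N_p^p\big)^{q/p}$ up to an exponent bookkeeping — more precisely $A = \big(C_q(k)N_q\big)^{q/(q-p)}\big(C_p(k)N_p\big)^{-p/(q-p)}$ — makes both terms comparable, and a direct computation gives
\[
    \|Tf_k\|_{L^r(\mathrm{d}\mu)} \leqslant K(p,q,r)\,C_p^{1-\theta}(k)\,C_q^{\theta}(k)\,N_p^{1-\theta}N_q^{\theta},
\]
where $\theta$ is determined by $\frac{1}{r} = \frac{1-\theta}{p} + \frac{\theta}{q}$.

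It remains to dispose of the interpolation of the right-hand side norms, namely to bound $N_p^{1-\theta}N_q^{\theta} = \|f_k\|_{L^{p/\beta}(\mathrm{d}\nu_{\alpha-1})}^{1-\theta}\|f_k\|_{L^{q/\beta}(\mathrm{d}\nu_{\alpha-1})}^{\theta}$ by a constant times $\|f_k\|_{L^{r/\beta}(\mathrm{d}\nu_{\alpha-1})}$. Since $\frac{1}{r} = \frac{1-\theta}{p} + \frac{\theta}{q}$ gives $\frac{\beta}{r} = \frac{(1-\theta)\beta}{p} + \frac{\theta\beta}{q}$, this is exactly the classical logarithmic convexity (interpolation) inequality for $L^s$-norms on the finite measure space $([0,1], \mathrm{d}\nu_{\alpha-1})$, which holds with constant $1$ by Hölder's inequality with exponents $\frac{1}{1-\theta}$ and $\frac{1}{\theta}$ applied to $|f_k|^{(1-\theta)\beta/p}\cdot|f_k|^{\theta\beta/q}$ raised to the power $r/\beta$. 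Absorbing $K(p,q,r)$ into the implicit constant in \eqref{eq.strong-cont} (or noting that $K(p,q,r)$ depends only on $p,q,r$, not on $k$ or $f_k$), we conclude that $T$ is of restricted strong type $r$ with $C_r(k) \leqslant C_p^{1-\theta}(k)C_q^{\theta}(k)$, up to the universal factor $K(p,q,r)$.
\end{proof}

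\begin{remark}
The only subtlety compared with the textbook Marcinkiewicz theorem is that the operator is defined a priori only on $F_k$ rather than on a dense subspace of an $L^s$-space; but the argument above never uses density, only the layer-cake identity and Hölder's inequality on the fixed space $([0,1],\mathrm{d}\nu_{\alpha-1})$, so no such hypothesis is needed. The main point — and the reason this lemma is separated out — is the uniform-in-$k$ tracking of constants, which is what feeds into \cref{thm.main-marcinkiewicz} and \cref{thm.interpol-supercritical}.
\end{remark}
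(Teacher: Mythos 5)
Your layer-cake computation matches the paper's, but your final step is reversed and constitutes a genuine gap. After the optimal choice of the threshold $A$, both you and the paper arrive at
\[
\|Tf_k\|_{L^r(\mathrm{d}\mu)} \lesssim C_p^{1-\theta}(k)\,C_q^{\theta}(k)\,\|f_k\|_{L^{p/\beta}(\mathrm{d}\nu_{\alpha-1})}^{1-\theta}\,\|f_k\|_{L^{q/\beta}(\mathrm{d}\nu_{\alpha-1})}^{\theta},
\]
and what is required is the bound $N_p^{1-\theta}N_q^{\theta} \lesssim \|f_k\|_{L^{r/\beta}(\mathrm{d}\nu_{\alpha-1})}$. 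You invoke "logarithmic convexity of $L^s$ norms / H\"older," but that inequality runs the \emph{other} way: H\"older with exponents $\frac{1}{1-\theta},\frac{1}{\theta}$ gives $\|f_k\|_{L^{r/\beta}} \leqslant N_p^{1-\theta}N_q^{\theta}$, which is the wrong direction for your argument. The reverse inequality is false for arbitrary functions on a finite measure space (log-convexity can be strict with arbitrarily large gap), so H\"older alone cannot close the proof.

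The missing ingredient is exactly the point where the M\"untz structure enters: for $f_k \in F_k$ one has the generalized Bernstein estimates of \cref{prop:atom}, which give a two-sided comparison
$\|f_k\|_{L^{s}(\mathrm{d}\nu_{\alpha-1})} \lesssim \lambda_{n_k}^{\alpha\beta(\frac{1}{r}-\frac{1}{s})}\|f_k\|_{L^{r/\beta}(\mathrm{d}\nu_{\alpha-1})}$ after appropriate substitution of exponents. The paper applies this to convert both $N_p$ and $N_q$ into multiples of $\|f_k\|_{L^{r/\beta}}$ with powers of $\lambda_{n_k}$, and the exponents cancel precisely because $\frac{1}{r}=\frac{1-\theta}{p}+\frac{\theta}{q}$. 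Without \cref{prop:atom} (or some substitute giving the reverse-H\"older direction on the finite-dimensional blocks $F_k$), the lemma does not follow; your remark that "nothing density-related is needed" is correct, but the finite-dimensionality of $F_k$ is used in a stronger, quantitative form than you acknowledge.
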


\begin{proof} The proof is elementary, we provide it for the sake of completeness. Fix $k\geq 0$ and $f_k\in F_k$. We simply write $C_p$ and $C_q$ instead of $C_p(k)$ and $C_q(k)$ as $k$ is fixed. Let also $A$ be a positive real number to be chosen later. Then the Cavalieri formula writes 
\begin{multline*}
    \|Tf_{k}\|_{L^{r}(\mathrm{d}\mu)}^{r} = r\int_{0}^{+\infty} \lambda^{r-1}\mu( \vert Tf_{k} \vert >\lambda)\,\mathrm{d}\lambda\\
    = r\int_{0}^{A}\lambda^{r-1}\mu( \vert Tf_{k} \vert > \lambda)\,\mathrm{d}\lambda + r\int_{A}^{+\infty}\lambda^{r-1}\mu( \vert Tf_{k} \vert > \lambda)\,\mathrm{d}\lambda. 
\end{multline*}
Using that $T$ is of restricted weak type $p$ and $q$, we infer  
\begin{multline*}
    \|Tf_{k}\|_{L^{r}(\mathrm{d}\mu)}^{r} \leq C_p^p\|f_{k}\|_{L^{\frac{p}{\beta}}(\mathrm{d}\nu_{\alpha - 1})}^{p}\int_{0}^{A}\lambda^{r-p-1}\,\mathrm{d}\lambda + C_q^q\|f_{k}\|_{L^{\frac{q}{\beta}}(\mathrm{d}\nu_{\alpha - 1})}^{q}\int_{A}^{+\infty}\lambda^{r-q-1}\,\mathrm{d}\lambda\\
    \lesssim C_p^pA^{r-p}\|f_{k}\|_{L^{\frac{p}{\beta}}(\mathrm{d}\nu_{\alpha - 1})}^{p}+C_q^qA^{r-q}\|f_{k}\|_{L^{\frac{q}{\beta}}(\mathrm{d}\nu_{\alpha -1})}^{q}.
\end{multline*}
It remains to choose $\displaystyle A=\left(\frac{C_q^q\|f_{k}\|_{L^{\frac{q}{\beta}}(\mathrm{d}\nu_{\alpha -1})}^{q}}{C_q^q\|f_{k}\|_{L^{\frac{p}{\beta}}(\mathrm{d}\nu_{\alpha -1})}^{p}}\right)^{\frac{1}{q-p}}$ so that 
\[
    \|Tf_{k}\|_{L^{r}(\mathrm{d}\mu)}^{r} \lesssim C_p^{p\frac{q-r}{q-p}}C_q^{q\frac{r-p}{q-p}}\|f_{k}\|_{L^{\frac{p}{\beta}}(\mathrm{d}\nu_{\alpha-1})}^{p\frac{q-r}{q-p}}\|f_{k}\|_{L^{\frac{q}{\beta}}(\mathrm{d}\nu_{\alpha-1})}^{q\frac{r-p}{q-p}}.
\]
Observe that $\theta = \frac{q(r-p)}{r(q-p)}$ and $1-\theta=\frac{p(q-r)}{r(q-p)}$ so that invoking \cref{prop:atom}, we finally obtain  
\begin{align*}
    \|Tf_{k}\|_{L^{r}(\mathrm{d}\mu)}^{r} & \lesssim C_p^{r(1-\theta)}C_q^{r\theta}\lambda_{k}^{p\frac{q-r}{q-p}(\frac{\alpha\beta}{r}-\frac{\alpha\beta}{p})}\|f_{k}\|_{L^{\frac{r}{\beta}}(\mathrm{d}\nu_{\alpha-1})}^{p\frac{q-r}{q-p}}\lambda_{k}^{q\frac{r-p}{q-p}(\frac{\alpha\beta}{r}-\frac{\alpha\beta}{q})}\|f_{k}\|_{L^{\frac{r}{\beta}}(\mathrm{d}\nu_{\alpha-1})}^{q\frac{r-p}{q-p}} \\
    &\lesssim C_p^{r(1-\theta)}C_q^{r\theta}\|f_{k}\|_{L^{\frac{r}{\beta}}(\mathrm{d}\nu_{\alpha-1})}^{r}. \qedhere
\end{align*}
\end{proof}

\subsection{Elementary results on series of lacunary coefficients} 

In the following, assume that $\{\lambda_k\}$ is a lacunary sequence, that an increasing sequence of positive numbers such that $\lambda_{k+1} \geq q \lambda_k$ for some $q >1$. Our first elementary remark is that for any $\kappa >0$ there holds 
\begin{equation}
    \label{eq.lacunary-est}
    \sum_{j=0}^i \lambda_j^{\kappa} \simeq \lambda_i^{\kappa} \quad\text{ and } \sum_{j>i} \lambda_j^{-\kappa} \simeq \lambda_i^{-\kappa}, 
\end{equation}
where the implicit constants only depend on $q$ and $\kappa$. 

\begin{lemma}\label{lem.sequence}
Let $\{\lambda_k\}_{k\geq 0}$ be a lacunary sequence of positive numbers. Let also $\{\varepsilon_k\}_{k\geq 0}$ be a sequence of positive numbers. Let $n\geq 2$ be an integer, $\kappa >0$, $\beta, \tau\in(0,1)$ and some parameter $A\geq 1$. Then: 
\begin{equation}
    \label{eq.sum1}
    \Delta_i^{+}:= \left(\sum_{j>i}\lambda_j^{-\kappa}\varepsilon_j^{\frac{(1-\tau)(1-\beta)}{n-1}}\right)^{n-1} \lesssim \sum_{k_1, \dots, k_{n-1}\geq 0} \prod_{j=1}^{n-1}\delta_{k_j}(i)\lambda_{i+k_jA}^{-\kappa}
\end{equation}
with $\delta_{k}(i)=\left(\displaystyle\sum_{i+kA\leq \ell < i+(k+1)A} \varepsilon_{\ell}\right)^{\frac{(1-\beta)(1-\tau)}{(n-1)}}$. Similarly it holds  
\begin{equation}
    \label{eq.sum2}
    \Delta_i^{-}:=\left(\sum_{j=1}^i\lambda_j^{\kappa}\varepsilon_j^{\frac{(1-\tau)(1-\beta)}{n-1}}\right)^{n-1} \lesssim \sum_{k_1, \dots, k_{n-1}\leq \frac{i}{A}} \prod_{j=1}^{n-1}\delta_{k_j}(i)\lambda_{i-k_jA}^{\kappa},
\end{equation}
with $\delta_{k}(i)=\left(\displaystyle\sum_{i-(k+1)A\leq \ell< i-kA} \varepsilon_{\ell}\right)^{\frac{(1-\beta)(1-\tau)}{(n-1)}}$.
\end{lemma}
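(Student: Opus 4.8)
The statement is an elementary combinatorial rearrangement, and the plan is to prove \eqref{eq.sum1}; the proof of \eqref{eq.sum2} is entirely symmetric (replacing the "tail" sums $\sum_{j>i}$ with "head" sums $\sum_{j\leq i}$ and using the first half of \eqref{eq.lacunary-est} in place of the second). First I would partition the index set $\{j : j > i\}$ into consecutive blocks of length $A$: set $I_k = \{\ell : i + kA \leq \ell < i + (k+1)A\}$ for $k \geq 0$, so that $\{j > i\} = \bigsqcup_{k\geq 0} I_k$ (up to the harmless fact that $A$ need not be an integer, in which case one takes $I_k = \{\ell : i + \lfloor kA\rfloor \leq \ell < i + \lfloor (k+1)A\rfloor\}$ — this changes nothing below). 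The key point is that $\lambda$ being lacunary means that within one block $I_k$ all the $\lambda_\ell^{-\kappa}$ are comparable to $\lambda_{i+kA}^{-\kappa}$ up to a constant depending only on $q, \kappa, A$: indeed if $\ell \in I_k$ then $i + kA \leq \ell \leq i + (k+1)A$, hence $\lambda_\ell \geq q^{\ell - (i+kA)}\lambda_{i+kA} \geq \lambda_{i+kA}$ and $\lambda_\ell \leq q^{A}\lambda_{i+kA}$.

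With this in hand, group the sum over $j > i$ according to which block $j$ lies in and bound, using the block-comparability of $\lambda_\ell^{-\kappa}$,
\[
    \sum_{j>i}\lambda_j^{-\kappa}\varepsilon_j^{\frac{(1-\tau)(1-\beta)}{n-1}}
    = \sum_{k\geq 0}\sum_{\ell\in I_k}\lambda_\ell^{-\kappa}\varepsilon_\ell^{\frac{(1-\tau)(1-\beta)}{n-1}}
    \lesssim \sum_{k\geq 0}\lambda_{i+kA}^{-\kappa}\sum_{\ell\in I_k}\varepsilon_\ell^{\frac{(1-\tau)(1-\beta)}{n-1}}.
\]
Now since $\frac{(1-\tau)(1-\beta)}{n-1} \in (0,1)$ (as $\tau,\beta\in(0,1)$ and $n\geq 2$), the elementary subadditivity inequality $\sum_\ell a_\ell^{s} \leq \big(\sum_\ell a_\ell\big)^{s}$ for $s\in(0,1)$ and $a_\ell\geq 0$ gives $\sum_{\ell\in I_k}\varepsilon_\ell^{\frac{(1-\tau)(1-\beta)}{n-1}} \leq \big(\sum_{\ell\in I_k}\varepsilon_\ell\big)^{\frac{(1-\tau)(1-\beta)}{n-1}} = \delta_k(i)$. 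Hence
\[
    \sum_{j>i}\lambda_j^{-\kappa}\varepsilon_j^{\frac{(1-\tau)(1-\beta)}{n-1}} \lesssim \sum_{k\geq 0}\delta_k(i)\lambda_{i+kA}^{-\kappa}.
\]

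Finally, raise both sides to the power $n-1$. Since all terms are nonnegative, $\big(\sum_{k\geq 0}\delta_k(i)\lambda_{i+kA}^{-\kappa}\big)^{n-1}$ expands as the multi-sum $\sum_{k_1,\dots,k_{n-1}\geq 0}\prod_{j=1}^{n-1}\delta_{k_j}(i)\lambda_{i+k_jA}^{-\kappa}$, which is exactly the right-hand side of \eqref{eq.sum1}. This concludes \eqref{eq.sum1}, and \eqref{eq.sum2} follows by the same argument with blocks $\{\ell : i-(k+1)A \leq \ell < i-kA\}$, using $\lambda_\ell \leq \lambda_{i-kA}$ and $\lambda_\ell \geq q^{-A}\lambda_{i-kA}$ for $\ell$ in such a block.

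The only mild subtleties, none of them a real obstacle, are: bookkeeping with a non-integer $A$ (handled by floors, absorbed into constants); checking that the exponent $\frac{(1-\tau)(1-\beta)}{n-1}$ genuinely lies in $(0,1)$ so that subadditivity applies (it does, by the hypotheses $\tau,\beta\in(0,1)$, $n\geq 2$); and making sure the implicit constants depend only on $q$, $\kappa$, $A$ (and not on $i$ or on the sequence $\{\varepsilon_k\}$), which is visible from the two comparison inequalities above. There is no deep step here — the lemma is purely a repackaging of the lacunarity estimate \eqref{eq.lacunary-est} together with the concavity of $x\mapsto x^{s}$ — but it has to be set up carefully because it will be applied $n-1$ times in a product inside the later interpolation arguments.
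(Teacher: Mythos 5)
Your proof is correct and follows essentially the same block decomposition and multi-sum expansion as the paper; the only real difference is in how you bound the single-block sum $A_k(i)=\sum_{\ell\in I_k}\lambda_\ell^{-\kappa}\varepsilon_\ell^{s}$ with $s=\frac{(1-\tau)(1-\beta)}{n-1}$. The paper applies H\"older's inequality in $\ell$ with exponents $\eta'=1/s$ and its conjugate $\eta$, producing the factor $\big(\sum_{\ell\in I_k}\lambda_\ell^{-\eta\kappa}\big)^{1/\eta}$ and then invoking the geometric tail estimate \eqref{eq.lacunary-est}; you instead use the monotonicity bound $\lambda_\ell^{-\kappa}\leq\lambda_{i+kA}^{-\kappa}$ together with the subadditivity of $x\mapsto x^s$ for $s\in(0,1)$. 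Your route is slightly more elementary (it avoids a H\"older step and even gives a sharp constant $A_k(i)\leq\delta_k(i)\lambda_{i+kA}^{-\kappa}$), whereas the paper's route stays uniform with the H\"older machinery used throughout the rest of the argument.

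One small caveat: your parenthetical claim that all $\lambda_\ell^{-\kappa}$ within a block are two-sidedly comparable to $\lambda_{i+kA}^{-\kappa}$ — in particular the inequality $\lambda_\ell\leq q^{A}\lambda_{i+kA}$ — does \emph{not} follow from lacunarity (which only bounds the growth from below); it would require the subgeometric upper bound. Fortunately this direction is never used: your chain of inequalities relies only on $\lambda_\ell\geq\lambda_{i+kA}$ for $\ell\geq i+kA$, which is just monotonicity. You may want to delete the extraneous two-sided claim so as not to suggest an unused and unwarranted hypothesis.
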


\begin{proof} We only present the proof for $\Delta_i^+$ as the treatment of $\Delta_i^{-}$ is similar. We write: 
\[
    \Delta_i^{+} = \left(\sum_{k\geq 0}A_k^{+}(i)\right)^{n-1} = \sum_{k_1, \dots, k_{n-1}\geq 0} \prod_{j=1}^{n-1}A_{k_j}(i),
\]
where
\begin{align*}
    A_{k}(i) &:=\sum_{i+ kA\leq j<i+ (k+1)A}\lambda_{j}^{-\kappa}\varepsilon_{j}^{\frac{(1-\beta)(1-\tau)}{(n-1)}} \\
    &\leq \left(\sum_{i+kA\leq j< i+(k+1)A} \varepsilon_{j}\right)^{\frac{(1-\beta)(1-\tau)}{(n-1)}}\left(\sum_{i+kA\leq j< i+(k+1)A}\lambda_{j}^{-\eta\kappa}\right)^{\frac{1}{\eta}},
\end{align*}
which stems for an application of Hölder's, where $\frac{1}{\alpha} = 1 - \frac{(1-\beta)(1-\tau)}{n-1}$. The conclusion follows from \eqref{eq.lacunary-est}. 
\end{proof}

\section{Proofs for $\beta \geq 1$}

\subsection{Proof of \cref{thm.main-marcinkiewicz}}

We proceed to the proof of \cref{thm.main-marcinkiewicz}, which follows the same lines of the proof developed in \cite{GaLe}, where generalized moments conditions on the measure $\mu$ are used. The corner stone of the proof is a full characterization of the weighted Lebesgue quasi-lacunary M\"{u}ntz spaces given by \cref{thm:Lp-frames}.
We distinguish two cases depending on the value of $r$, and we write $\frac{1}{r}=\frac{1-\vartheta}{p} + \frac{\vartheta}{q}$.

\subsubsection*{Case $r \geqslant 1+\beta$} Let $0<\gamma<1$ and $\theta>0$ be some parameters that will be chosen later. Let $f=\sum\limits_{k\geq 0}f_{k} \in M_{\Lambda}$. It will be convenient to introduce $g_{k}=\displaystyle\frac{\vert Tf_{k} \vert}{\|f_{k}\|_{L^{\frac{r}{\beta}}(\mathrm{d}\nu_{\alpha-1})}}$ if $\|f_{k}\|_{L^{\frac{r}{\beta}}(\mathrm{d}\nu_{\alpha-1})} \neq 0$ and $g_k=0$ otherwise. We write $\lambda_k$ as a short-hand for $\lambda_{n_k}$.
We start by an application of the triangle inequality followed by the Hölder inequality to the series in $k$. We have: 
\begin{multline*}
    \int_{[0,1]}\vert Tf \vert ^{r} \,\mathrm{d}\mu \leq \int_{[0,1]} \left(\sum_{k\geq 0}\vert Tf_{k} \vert \right)^{r} \,\mathrm{d}\mu = \int_{[0,1]} \left(\sum_{k\geq 0}\|f_{k}\|_{L^{\frac{r}{\beta}}(\mathrm{d}\nu_{\alpha-1})} g_k^{\gamma} \lambda_{k}^{-\theta}\times g_k^{1-\gamma} \lambda_{k}^{\theta}\right)^{r}\,\mathrm{d}\mu \\ 
    \leq \int_{[0,1]} \left(\sum_{k\geq 0}\|f_{k}\|_{L^{\frac{r}{\beta}}(\mathrm{d}\nu_{\alpha-1})}^{\frac{r}{\beta}} g_k^{\frac{r\gamma}{\beta}} \lambda_{k}^{-\frac{r\theta}{\beta}}\right)^{\beta}\left(\sum_{k\geq 0} g_k^{\frac{r(1-\gamma)}{r-\beta}}\lambda_{k}^{\frac{r\theta}{r-\beta}}\right)^{r-\beta} \,\mathrm{d}\mu 
\end{multline*}
Since $\beta \geq 1$, an application of the Minkowski inequality with the measure 
\[
    \left(\sum_{k\geq 0} g_k^{\frac{r(1-\gamma)}{r-\beta}}\lambda_{k}^{\frac{r\theta}{r-\beta}}\right)^{r-\beta} \,\mathrm{d}\mu 
\]
gives 
\begin{equation}\label{eq.Minko-1}
    \int_{[0,1]}\vert Tf \vert ^{r} \,\mathrm{d}\mu \leq \left(\sum_{k\geq 0}\|f_{k}\|_{L^{\frac{r}{\beta}}(\mathrm{d}\nu_{\alpha-1})}^{\frac{r}{\beta}} \lambda_{k}^{-\frac{r\theta}{\beta}} \left(\int_{[0,1]} g_{k}^{r\gamma}\left(\sum_{j\geq 0} g_{j}^{\frac{r(1-\gamma)}{r-\beta}}\lambda_{j}^{\frac{r\theta}{r-\beta}}\right)^{r-\beta}\,\mathrm{d}\mu\right)^{\frac{1}{\beta}}\right)^{\beta}.
\end{equation}
As $r-\beta \geq 1$, we apply the Minkowski inequality again to each measure $g_k^{r\gamma}\,\mathrm{d}\mu$, in order to bound  
\begin{align*}
    \int_{[0,1]}\vert Tf \vert ^{r} \,\mathrm{d}\mu &\leq \left(\sum_{k\geq 0}\|f_{k}\|_{L^{\frac{r}{\beta}}(\mathrm{d}\nu_{\alpha-1})}^{\frac{r}{\beta}} \lambda_{k}^{-\frac{r\theta}{\beta}}\left(\sum_{j\geq 0}\lambda_{j}^{\frac{r\theta}{r-\beta}}\left(\int_{[0,1]} g_{k}^{r\gamma}g_{j}^{r(1-\gamma)}\,\mathrm{d}\mu\right)^{\frac{1}{r-\beta}}\right)^{\frac{r-\beta}{\beta}}\right)^{\beta} \\ 
    &=: \left(\sum_{k\geq 0} \|f_{k}\|_{L^{\frac{r}{\beta}}(\mathrm{d}\nu_{\alpha-1})}^{\frac{r}{\beta}} A_k\right)^{\beta},
\end{align*}
where for all $k\geq 0$ we have defined 
\[ 
    A_{k}:=\lambda_{k}^{-\frac{r\theta}{\beta}}\left(\sum_{j\geq 0}\lambda_{j}^{\frac{r\theta}{r-\beta}}\left(\int_{[0,1]} g_{k}^{r\gamma}g_{j}^{r(1-\gamma)}\,\mathrm{d}\mu\right)^{\frac{1}{r-\beta}}\right)^{\frac{r-\beta}{\beta}}.
\]
It is convenient for us to split $A_{k}$ as follows: 
\begin{align}
\label{l1} A_{k} & \lesssim \lambda_{k}^{-\frac{r\theta}{\beta}}\left(\sum_{j=0}^k\lambda_{j}^{\frac{r\theta}{r-\beta}}\left(\int_{[0,1]} g_{k}^{r\gamma}g_{j}^{r(1-\gamma)}\,\mathrm{d}\mu\right)^{\frac{1}{r-\beta}}\right)^{\frac{r-\beta}{\beta}}\\
\label{l2} & + \lambda_{k}^{-\frac{r\theta}{\beta}}\left(\sum_{j> k}\lambda_{j}^{\frac{r\theta}{r-\beta}}\left(\int_{[0,1]} g_{k}^{r\gamma}g_{j}^{r(1-\gamma)}\,\mathrm{d}\mu\right)^{\frac{1}{r-\beta}}\right)^{\frac{r-\beta}{\beta}}.
\end{align}
We start by estimating \eqref{l1}. We use the Hölder inequality as well as \cref{interpolbis} which shows that for all $i\geq 0$,
\[
    \int_{[0,1]}g_i^r\,\mathrm{d}\mu \leqslant C_p^{r(1-\vartheta)}C_q^{r\vartheta}. 
\]
Therefore, we have 
\begin{multline}\label{l7} 
    \eqref{l1} \lesssim \lambda_{k}^{-\frac{r\theta}{\beta}}\left(\sum_{j=0}^{k}\lambda_{j}^{\frac{r\theta}{r-\beta}} \left(\int_{[0,1]}g_{k}^{r}\,\mathrm{d}\mu\right)^{\frac{\gamma}{r-\beta}}\left(\int_{[0,1]}g_{j}^{r}\,\mathrm{d}\mu\right)^{\frac{(1-\gamma)}{r-\beta}}\right)^{\frac{r-\beta}{\beta}} \\
    \lesssim C_p^{\frac{r}{\beta}(1-\vartheta)}C_q^{\frac{r}{\beta}\vartheta}\lambda_{k}^{-\frac{r\theta}{\beta}}\left(\sum_{j=0}^{k}\lambda_{j}^{\frac{r\theta}{r-\beta}}\right)^{\frac{r-\beta}{\beta}} \lesssim C_p^{\frac{r}{\beta}(1-\vartheta)}C_q^{\frac{r}{\beta}\vartheta}\lambda_{k}^{-\frac{r\theta}{\beta}}\lambda_{k}^{\frac{r\theta}{\beta}} \lesssim C_p^{\frac{r}{\beta}(1-\vartheta)}C_q^{\frac{r}{\beta}\vartheta},
\end{multline}
where we have used that the sequence $\{\lambda_k\}_{k\geq 0}$ is lacunary and \eqref{eq.lacunary-est}. 

In order to handle \eqref{l2} we fix $\gamma=\frac{1}{2}$. The general idea would be to follow the previous computations using the Cauchy-Schwarz inequality in the inner integral, but this would slightly fall short of estimating \eqref{l2}. This is the reason why we apply the Hölder inequality with exponents $s_{\varepsilon}:=2(1-\varepsilon)$ and $s'_{\varepsilon} = \frac{2(1-\varepsilon)}{1-2\varepsilon}$ in the inner integral: 
\begin{multline}\label{l3} 
    \sum_{j> k}\lambda_{j}^{\frac{r\theta}{r-\beta}}\left(\int_{[0,1]} g_{k}^{r\gamma}g_{j}^{r(1-\gamma)}\,\mathrm{d}\mu\right)^{\frac{1}{r-\beta}} \\
    \leq \left(\int_{[0,1]} g_{k}^{\frac{rs'_{\varepsilon}}{2}}\,\mathrm{d}\mu\right)^{\frac{1}{s'_{\varepsilon}(r-\beta)}}\sum_{j> k}\lambda_{j}^{\frac{r\theta}{r-\beta}}\left(\int_{[0,1]}g_{j}^{\frac{rs_{\varepsilon}}{2}}\,\mathrm{d}\mu\right)^{\frac{1}{s_{\varepsilon}(r-\beta)}}.
\end{multline}
Since $r \in (p,q)$ we can choose $\varepsilon \ll 1$ such that $p<r(1-\varepsilon)=\frac{rs_{\varepsilon}}{2}<r<\frac{1-\varepsilon}{1-2\varepsilon}=\frac{rs'_{\varepsilon}}{2}<q$. 

Next, we use \cref{interpolbis} with exponent $\frac{rs'_{\varepsilon}}{2} \in (p,q)$, so that 
\begin{equation}\label{l4}
    \|g_{k}\|_{L^{\frac{rs'_{\varepsilon}}{2}}(\mathrm{d}\mu)} = \frac{\|Tf_{k}\|_{L^{\frac{rs'_{\varepsilon}}{2}}(\mathrm{d}\mu)}}{\|f_{k}\|_{L^{\frac{r}{\beta}}(\mathrm{d}\nu_{\alpha-1})}} \leq C_p^{1-\vartheta'_{\varepsilon}}C_q^{\vartheta'_{\varepsilon}}\lambda_{k}^{\alpha \beta (\frac{1}{r}-\frac{2}{rs'_{\varepsilon}})} =  C_p^{1-\vartheta'_{\varepsilon}}C_q^{\vartheta'_{\varepsilon}}\lambda_{k}^{\frac{\alpha \beta \varepsilon}{r(1-\varepsilon)}}, 
\end{equation}
where we have used \cref{prop:atom} and $\frac{2}{rs'_{\varepsilon}}=\frac{1-\vartheta_{\varepsilon}'}{p}+\frac{\vartheta_{\varepsilon}'}{q}$. 
Similarly, an application of \cref{interpolbis} and \cref{prop:atom} gives 
\begin{equation}\label{l4*}
    \|g_j\|_{L^{\frac{rs_{\varepsilon}}{2}}(\mathrm{d}\mu)}=\frac{\|Tf_j\|_{L^{\frac{rs_{\varepsilon}}{2}}(\mathrm{d}\mu)}}{\|f_j\|_{L^{\frac{r}{\beta}}(\mathrm{d}\nu_{\alpha-1}\mathrm{d}x)}} \lesssim C_p^{1-\vartheta_{\varepsilon}}C_q^{\vartheta_{\varepsilon}}\lambda_{j}^{-\frac{\alpha \beta \varepsilon}{r(1-\varepsilon)}},
\end{equation}
with $\frac{2}{rs_{\varepsilon}}=\frac{1-\vartheta_{\varepsilon}}{p}+\frac{\vartheta_{\varepsilon}}{q}$.
Observe that 
\[
    C_p^{1-\vartheta'_{\varepsilon}}C_q^{\vartheta'_{\varepsilon}}C_p^{1-\vartheta_{\varepsilon}}C_q^{\vartheta_{\varepsilon}} = C_p^{1-\vartheta}C_q^{\vartheta}
\]
because $\frac{2-\vartheta'_{\varepsilon}-\vartheta_{\varepsilon}}{p} + \frac{\vartheta_{\varepsilon}'+\vartheta_{\varepsilon}}{q} = \frac{2}{r}\left(\frac{1}{s'_{\varepsilon}}+\frac{1}{s_{\varepsilon}}\right) = \frac{2}{r} = \frac{2(1-\vartheta)}{p}+\frac{2\vartheta}{q}$.
Combining \eqref{l3}, \eqref{l4} and \eqref{l4*} yields 
\begin{equation}\label{l5}
    \eqref{l2} \lesssim C_p^{\frac{r}{\beta}(1-\vartheta)}C_q^{\frac{r}{\beta}\vartheta}\lambda_{k}^{-\frac{r\theta}{\beta}+\frac{\alpha \varepsilon}{2(1-\varepsilon)}}\left(\sum_{j> k}\lambda_{j}^{\frac{r\theta}{r-\beta}-\frac{\alpha \beta \varepsilon }{2(1-\varepsilon)(r-\beta)}}\right)^{\frac{r-\beta}{\beta}}.
\end{equation}

It remains to choose $\theta$ such that $\theta < \frac{\alpha \beta \varepsilon}{2(1-\varepsilon)r}$ in order to ensure convergence of the series. Using that $\{\lambda_k\}_{k\geq 0}$ is lacunary and \eqref{eq.lacunary-est}, we obtain 
\begin{equation}\label{l6} 
    \eqref{l2} \lesssim C_p^{\frac{r}{\beta}(1-\vartheta)}C_q^{\frac{r}{\beta}\vartheta}\lambda_{k}^{-\frac{r\theta}{\beta}+\frac{\alpha \varepsilon}{2(1-\varepsilon)}}\times \lambda_{k}^{\frac{r\theta}{\beta}-\frac{\alpha\varepsilon}{2(1-\varepsilon)}} \lesssim C_p^{\frac{r}{\beta}(1-\vartheta)}C_q^{\frac{r}{\beta}\vartheta}.
\end{equation}
Combining \eqref{l7} and \eqref{l6} we arrive at $A_k \lesssim C_p^{\frac{r}{\beta}(1-\vartheta)}C_q^{\frac{r}{\beta}\vartheta}$ uniformly in $k$, and the conclusion follows from \cref{thm:Lp-frames}:
\[
    \int_{[0,1]}\vert Tf \vert ^{r} \,\mathrm{d}\mu \lesssim C_p^{r(1-\vartheta)}C_q^{r\vartheta}\left(\sum_{k\geq 0}\|f_{k}\|_{L^{\frac{r}{\beta}}(\mathrm{d}\nu_{\alpha-1})}^{\frac{r}{\beta}}\right)^{\beta}\lesssim C_p^{r(1-\vartheta)}C_q^{r\vartheta}\|f\|_{L^{\frac{r}{\beta}}(\mathrm{d}\nu_{\alpha-1})}^{r}.
\]

\subsubsection*{Case $\beta < r < 1+\beta$} Let $0<\gamma<1$ and $\theta>0$ be some parameters that will be chosen suitably in following. As in the first case, let $f= \sum_{k\geq 0} f_k \in M_{\Lambda}$. We can start from \eqref{eq.Minko-1}. Our task is therefore to estimate the  
\[
    B_k = \lambda_{k}^{-\frac{r\theta}{\beta}}\left(\int_{[0,1]} g_{k}^{r\gamma}\left(\sum_{j\geq 0} g_{j}^{\frac{r(1-\gamma)}{r-\beta}}\lambda_{j}^{\frac{r\theta}{r-\beta}}\right)^{r-\beta}\,\mathrm{d}\mu\right)^{\frac{1}{\beta}},
\]
which we split as 
\begin{align}\label{eql1} B_k &= \lambda_{k}^{-\frac{r\theta}{\beta}}\left(\int_{[0,1]} g_{k}^{r\gamma}\left(\sum_{j=0}^{k} g_{j}^{\frac{r(1-\gamma)}{r-\beta}}\lambda_{j}^{\frac{r\theta}{r-\beta}}\right)^{r-\beta}\,\mathrm{d}\mu\right)^{\frac{1}{\beta}} \\
    &\label{eql2} + \lambda_{k}^{-\frac{r\theta}{\beta}}\left(\int_{[0,1]} g_{k}^{r\gamma}\left(\sum_{j>k} g_{j}^{\frac{r(1-\gamma)}{r-\beta}}\lambda_{j}^{\frac{r\theta}{r-\beta}}\right)^{r-\beta}\,\mathrm{d}\mu\right)^{\frac{1}{\beta}}.
\end{align}

Using the same argument as in the previous case, the proof boils down to proving $B_k \lesssim C_p^{r(1-\vartheta)}C_q^{r\vartheta}$, which is done by estimating separately \eqref{eql1} and \eqref{eql2}. 

We start estimating \eqref{eql2} with an application of the H\"{o}lder inequality in the inner integral, with exponents $\frac{1}{r-\beta}$ and $\frac{1}{1+\beta-r}$ (since $r-\beta >0$), which gives 
\[
    \eqref{eql2} \lesssim \lambda_k^{-\frac{r\theta}{\beta}}\left(\int_{[0,1]} g_{k}^{\frac{r\gamma}{1+\beta-r}}\,\mathrm{d}\mu\right)^{\frac{1+\beta-r}{\beta}}\left(\sum_{j> k}\lambda_{j}^{\frac{r\theta}{r-\beta}}\int_{[0,1]} g_{j}^{\frac{r(1-\gamma)}{r-\beta}}\,\mathrm{d}\mu \right)^{\frac{r-\beta}{\beta}}.
\]
The next step is an application of \cref{interpolbis} and \cref{prop:atom}. In order to do so, let us make clear that we can choose $\gamma$ so that 
\begin{equation}\label{eq.cond1}
    p<\frac{r(1-\gamma)}{r-\beta}<r<\frac{r\gamma}{1+\beta-r}<q.
\end{equation}
This is indeed possible because $p<r$ and $r<q$, which implies $(1+\beta-r)<1-\frac{p}{r}(r-\beta)$ and $(1+\beta-r)<\frac{q}{r}(1+\beta-r)$. It then suffices to choose 
\[
    (1+\beta-r)<\gamma<\min\left\{1-\frac{p}{r}(r-\beta),\frac{q}{r}(1+\beta-r)\right\}
\]
in order to ensure \eqref{eq.cond1}. Then, \cref{interpolbis} followed by \cref{prop:atom} imply 
\begin{align}
    \left(\int_{[0,1]} g_{k}^{\frac{r\gamma}{1+\beta-r}}\,\mathrm{d}\mu\right)^{\frac{1+\beta-r}{\beta}} &\lesssim C_p^{\frac{r\gamma}{\beta}(1-\vartheta_1)}C_q^{\frac{r\gamma}{\beta}\vartheta_1}\lambda_{k}^{\frac{r\gamma}{1+\beta-r} \frac{1+\beta-r}{\beta}  \alpha \beta (\frac{1}{r}-\frac{1+\beta-r}{r\gamma})}\notag \\
    &\lesssim C_p^{\frac{r\gamma}{\beta}(1-\vartheta_1)}C_q^{\frac{r\gamma}{\beta}\vartheta_1}\lambda_{k}^{\alpha(r+\gamma-\beta-1)},\label{eql3}  
\end{align}
where $\frac{1+\beta-r}{r\gamma}=\frac{1-\vartheta_1}{p}+\frac{\vartheta_1}{q}$. Also, 
\begin{align}
    \int_{[0,1]} g_{j}^{\frac{r(1-\gamma)}{r-\beta}}\,\mathrm{d}\mu &\lesssim C_p^{\frac{r(1-\gamma)}{r-\beta}(1-\vartheta_2)}C_q^{\frac{r(1-\gamma)}{r-\beta}\vartheta_2}\lambda_{j}^{\frac{r(1-\gamma)}{r-\beta} \alpha \beta (\frac{1}{r}-\frac{r-\beta}{r(1-\gamma)})} \notag\\ 
    &\lesssim C_p^{\frac{r(1-\gamma)}{r-\beta}(1-\vartheta_2)}C_q^{\frac{r(1-\gamma)}{r-\beta}\vartheta_2}\lambda_{j}^{-\frac{\alpha \beta (r+\gamma-\beta-1)}{r-\beta}}. \label{eqm1} 
\end{align}
where $\frac{r-\beta}{r(1-\gamma)}=\frac{1-\vartheta_2}{p}+\frac{\vartheta_2}{q}$.  

Therefore, choosing $\theta$ such that $0<\theta <\frac{\alpha \beta (r+\gamma-\beta-1)}{r}$, which is possible since $r+\gamma - \beta >0$, it follows that  
\begin{align}
    \left(\sum_{j>k}\lambda_{j}^{\frac{r\theta}{r-\beta}}\int_{[0,1]}g_{j}^{\frac{r(1-\gamma)}{r-\beta}}\,\mathrm{d}\mu \right)^{\frac{r-\beta}{\beta}} &\lesssim C_p^{\frac{r(1-\gamma)}{\beta}(1-\vartheta_2)}C_q^{\frac{r(1-\gamma)}{\beta}\vartheta_2}\left(\sum_{j\geq k} \lambda_{j}^{\frac{r\theta}{r-\beta}}\lambda_{j}^{\frac{-\alpha \beta (r+\gamma-\beta-1)}{r-\beta}}\right)^{\frac{r-\beta}{\beta}}\notag \\
    &\lesssim C_p^{\frac{r(1-\gamma)}{\beta}(1-\vartheta_2)}C_q^{\frac{r(1-\gamma)}{\beta}\vartheta_2}\lambda_{k}^{-\alpha(r+\gamma-\beta-1)+\frac{r\theta}{\beta}},\label{eql4}
\end{align}
thanks to the choice of $\theta$ and because $\{\lambda_{k}\}_{k\geq 0}$ is lacunary.
Combining \eqref{eql3} and \eqref{eql4} we get that
\begin{equation}\label{eql5}
\eqref{eql2} \lesssim C_p^{\frac{r}{\beta}(1-\vartheta)}C_q^{\frac{r}{\beta}\vartheta}\lambda_{k}^{-\frac{r\theta}{\beta}}\lambda_{k}^{\alpha(r+\gamma-\beta-1)}\lambda_{k}^{-\alpha (\gamma+r-\beta-1)+\frac{r\theta}{\beta}}\lesssim C_p^{\frac{r}{\beta}(1-\vartheta)}C_q^{\frac{r}{\beta}\vartheta}.
\end{equation}

To estimate \eqref{eql1} we use the same choice of $\gamma$, and we also apply the H\"{o}lder inequality with exponents $\frac{1}{\gamma}$ and $\frac{1}{1-\gamma}$ in the inner integral:
\begin{multline}\label{eqm3}
    \eqref{eql1} \lesssim \lambda_k^{-\frac{r\theta}{\beta}}\left(\int_{[0,1]} g_{k}^{r} \,\mathrm{d}\mu \right)^{\frac{\gamma}{\beta}}\left(\int_{[0,1]}\left(\sum_{j=0}^{k} g_{j}^{\frac{r(1-\gamma)}{r-\beta}}\lambda_{j}^{\frac{r\theta}{r-\beta}}\right)^{\frac{r-\beta}{1-\gamma}}\,\mathrm{d}\mu\right)^{\frac{1-\gamma}{\beta}}\\
    \lesssim C_p^{\frac{r\gamma}{\beta}(1-\vartheta)}C_q^{\frac{r\gamma}{\beta}\vartheta}\lambda_k^{-\frac{r\theta}{\beta}}\left(\int_{[0,1]} \left(\sum_{j=1}^{k} g_{j}^{\frac{r(1-\gamma)}{r-\beta}}\lambda_{j}^{\frac{r\theta}{r-\beta}}\right)^{\frac{r-\beta}{1-\gamma}}\,\mathrm{d}\mu\right)^{\frac{1-\gamma}{\beta}},
\end{multline}
where in the last inequality we have used \cref{interpolbis}. Next, as $\frac{r-\beta}{1-\gamma}>1$, the Minkowski inequality yields 
\begin{multline}\label{eqm4}
    \eqref{eql1} \lesssim C_p^{\frac{r\gamma}{\beta}(1-\vartheta)}C_q^{\frac{r\gamma}{\beta}\vartheta}\lambda_k^{-\frac{r\theta}{\beta}} \left(\sum_{j=0}^{k}\lambda_{j}^{\frac{r\theta}{r-\beta}}\left(\int_{[0,1]}g_{j}^{r}\,\mathrm{d}\mu\right)^{\frac{1-\gamma}{r-\beta}}\right)^{\frac{r-\beta}{\beta}}\\
    \lesssim C_p^{\frac{r}{\beta}(1-\vartheta)}C_q^{\frac{r}{\beta}\vartheta}\lambda_k^{-\frac{r\theta}{\beta}}\left(\sum_{j=0}^{k}\lambda_{j}^{\frac{r\theta}{r-\beta}}\right)^{\frac{r-\beta}{\beta}} \lesssim C_p^{\frac{r}{\beta}(1-\vartheta)}C_q^{\frac{r}{\beta}\vartheta},
\end{multline}
where we have used \cref{interpolbis} and because $\{\lambda_{k}\}_{k\geq 0}$ is lacunary (see \eqref{eq.lacunary-est}). 
 
Combining \eqref{eql5} and \eqref{eqm4} yields $B_k \lesssim C_p^{r(1-\theta)}C_q^{r\theta}$ and therefore ends the proof.

Let us prove that under the assumption that one of the sequence $\{C_p(k)\}_{k\geq 0}$ or $\{C_q(k)\}_{k\geq 0}$ is bounded by a sequence $\varepsilon_k$ that vanishes as $k\to \infty$, then $T$ is compact. Let us introduce $T_n:=TS_n$, where $S_nf:=\sum_{k=0}^n f_k$. The above arguments can be applied to the operator $T-T_n$, which is then of strong type $r$ with norm bounded by $\sup_{k>n}C_p(k)^{1-\vartheta}C_q(k)^{\vartheta}\to 0$ as $n\to \infty$, hence $T$ is compact. 

\subsection{Consequences of \cref{thm.main-marcinkiewicz}} 

In this section we prove the three corollaries to \cref{thm.main-marcinkiewicz} stated in \cref{sec.consequence-subcritical}.

\begin{proof}[Proof of \cref{coro:boundedness}]
We start with an application of \cref{prop:atom}, which yields 
\[ 
    \|f_{k}\|_{L^{\infty}} \leq C\lambda_{n_k}^{\frac{\alpha \beta}{r}}\|f_{k}\|_{L^{\frac{r}{\beta}}(\mathrm{d}\nu_{\alpha - 1})} := \Lambda_{k}.
\] 
We can then use this information in the Cavalieri formula: 
\[
    \int_{[0,1]}\vert f_k \vert ^{r} \,\mathrm{d}\mu  = r\int_{0}^{+\infty} \lambda^{r-1}\mu(\vert f_{k} \vert > \lambda) \,\mathrm{d}\lambda = r\int_{0}^{\Lambda_k} \lambda^{r-1}\mu(\vert f_{k} \vert > \lambda)\,\mathrm{d}\lambda, 
\] 
since by assumption $\imath_{\mu}$ is of restricted weak-type $p$, we infer 
\begin{equation*}
    \int_{[0,1]}\vert f_k \vert ^{r} \,\mathrm{d}\mu \lesssim \|f_{k}\|_{L^{\frac{p}{\beta}}(\mathrm{d}\nu_{\alpha-1})}^{p}\int_{0}^{\Lambda_{k}}\lambda^{r-p-1}\,\mathrm{d}\lambda \lesssim \|f_{k}\|_{L^{\frac{p}{\beta}}(\mathrm{d}\nu_{\alpha - 1})}^{p}\lambda_{n_k}^{\alpha \beta \frac{r-p}{p}}\|f_{k}\|_{L^{\frac{r}{\beta}}(\mathrm{d}\nu_{\alpha - 1})}^{r-p},
\end{equation*}
where in the last line we have used $r-p-1 >-1$.
A further application of \cref{prop:atom} finally yields 
\[
    \int_{[0,1]}\vert f_k \vert ^{r} \,\mathrm{d}\mu \lesssim  \lambda_{n_k}^{\alpha \beta \frac{r-p}{p}}\lambda_{n_k}^{r\alpha \beta(\frac{1}{r}-\frac{1}{p})}\|f_{k}\|_{L^{\frac{r}{\beta}}(\mathrm{d}\nu_{\alpha - 1})}^{r}
    = \|f_{k}\|_{L^{\frac{r}{\beta}}(\mathrm{d}\nu_{\alpha - 1})}^{r}.
\] 
We thus have obtained that $\imath_{\mu}$ is of restricted strong type for any $r>p$, from which the conclusions now follows from \cref{thm.main-marcinkiewicz}. 
\end{proof}

\begin{proof}[Proof of \cref{coro.embeddings-subcritical}]
Within this proof, we slightly abuse notation by writing $\lambda_{k}$ instead of $\lambda_{n_k}$. Therefore, assumption $B_{p,\Lambda}(\alpha,\beta)$ writes 
\begin{equation}
\label{eq.BpL-use1}  
    \int_{[0,1]}t^{p\lambda_{k}}\mathrm{d}\mu(t)\leq C_k\lambda_{k}^{-\alpha\beta}
\end{equation} 
for some $C_k\leq C <\infty$.

Assume that $\ell\geq \max\{\beta,pN\}$. Then an application of \cref{thm:muntz-bound} followed by \cref{prop:atom} yields
\begin{equation}
    \label{eq.Bp-use}
    \int_{[0,1]}\vert f_{k}(t) \vert ^{\ell} \,\mathrm{d}\mu(t)\lesssim \|f_{k}\|_{L^{\infty}}^{\ell}\int_{[0,1]} t^{\frac{\ell\lambda_{k}}{N}}\,\mathrm{d}\mu(t)
    \lesssim \|f_{k}\|_{L^{\frac{\ell}{\beta}}(\mathrm{d}\nu_{\alpha - 1})}^{\ell}\lambda_{k}^{\alpha\beta}\int_{[0,1]} t^{p\lambda_{k}}\,\mathrm{d}\mu(t)
\end{equation}
where we have used \cref{prop:atom} and because $\ell \geq pN$. Now an application of \eqref{eq.BpL-use1} yields 
\begin{equation}
    \label{eq.bounded}
    \int_{[0,1]}\vert f_{k}(t) \vert ^{\ell} \,\mathrm{d}\mu(t) \leq C_k\|f_{k}\|_{L^{\frac{\ell}{\beta}}(\mathrm{d}\nu_{\alpha - 1})}^{\ell},
\end{equation}
or equivalently, that $\imath_{\mu}$ is of restricted strong-type $\ell$. This holds for any $\ell\geq \max\{\beta,pN\}$. 

Assume now that $pN\leq \beta$. Then $\imath_{\mu}$ is of restricted strong type $r$ for all $r\geq \beta$, so that \cref{thm.main-marcinkiewicz} proves that $\imath$ is of strong type $r$ for any $r>\beta$. The case $r=\beta$ follows from \cref{rem:strong}.
    
Assume now that $pN>\beta$. Then we know that $\imath_{\mu}$ is of restricted strong type $\ell$ for any $\ell>pN$, from wich we infer from \cref{thm.main-marcinkiewicz} that $\imath_{\mu}$ is of strong type $r$, for any $r>pN$.
\end{proof}  

\begin{remark}\label{rem.compact-sub}
The compactness statement can be easily obtained as follows: let $S_{n}$ defined as $S_nf=\sum_{k=0}^nf_k$ for any M{\"u}ntz polynomial $f= \sum_{k\geq 0} f_k$. Then $T_n:=\imath_{\mu}S_n$ is a finite rank operator and one can use \eqref{eq.bounded} to obtain that $\imath_{\mu} - T_n$ is of restricted weak type $\ell$, for any $\ell \geq \{\beta, pN\}$ and with constant $C_{\ell}(k)$, therefore \cref{thm.main-marcinkiewicz} implies that $\imath_{\mu} - T_n$ is of strong type $r$ with constant at most $s\sup_{k>n}C_r(k) \to 0$ as $n\to \infty$. This proves that $\imath$ is compact.    
\end{remark}

\begin{proof}[Proof of \cref{coro.subcritical1}] Again in this proof we write $\lambda_k$ in place of $\lambda_{n_k}$. 
In order to prove (\textit{i}) $\Longrightarrow$ (\textit{ii}) and by virtue of \cref{coro.embeddings-subcritical} it is enough to check that a measure $\mu \in \mathcal{M}_{x^{\alpha\beta}}$ satisfies the assumption $B_{\varepsilon,\Lambda}(\alpha,\beta)$ for any $\varepsilon>0$. We recall \cite[Lemma 2.2]{CFT} that since $\mu([1-\varepsilon])\leqslant \varepsilon^{\alpha\beta} =:\rho(\varepsilon) $, there holds 
\[
    \int_{[0,1]} t^{\varepsilon\lambda_k} \mathrm{d}\mu(t) \lesssim \int_{[0,1]} t^{\varepsilon\lambda_k} \rho'(1-t)\,\mathrm{d}t \lesssim  \int_{[0,1]} t^{\varepsilon\lambda_k}(1-t)^{\alpha\beta - 1}\,\mathrm{d}t \simeq \lambda_k^{-\alpha\beta}.
\]
    
That (\textit{ii}) $\Longrightarrow$ (\textit{i}) can be seen from adapting the argument in \cite[Section 3]{LM24}. For the reader's convenience we reproduce it: first as $(1-\varepsilon)^{\frac{p}{\varepsilon}} \underset{\varepsilon \to 0}{\longrightarrow} e^{-p}$, we have 
\[
	\mu([1-\varepsilon, 1])=\int_{1-\varepsilon}^1 \,\mathrm{d}\mu \leqslant Ce^p \int_{1-\varepsilon}^1 (1-\varepsilon)^{\frac{p}{\varepsilon}} \,\mathrm{d}\mu \lesssim \int_0^1 t^{\frac{p}{\varepsilon}}\,\mathrm{d}\mu, 
\]
Now, with $\varepsilon = \lambda_k^{-1}$ and applying (\textit{ii}) to the monomial $f(t)=t^{\lambda_{k}}$ leads to
\[
	\mu([1-\lambda_k^{-1}, 1]) \lesssim \int_{[0,1]}t^{\lambda_{k}p}\,\mathrm{d}\mu(t) \lesssim \|t^{\lambda_k}\|_{L^{\frac{p}{\beta}}(\mathrm{d}\nu_{\alpha - 1})}^{\beta} \lesssim \lambda_k^{-\alpha\beta}. 	
\]
The conclusion follows from the positivity of $\mu$. Indeed, for any $\varepsilon >0$ we chose $k\geqslant 0$ such that $\lambda_{k+1}^{-1}\leq \varepsilon \leq \lambda_k^{-1}$ and conclude 
\[
	\mu([1-\varepsilon,1]) \leqslant \mu([1-\lambda_k^{-1}, 1]) \lesssim \lambda_k^{-\alpha\beta} \lesssim \varepsilon^{\alpha\beta},
\]
which proves that $\mu \in\mathcal{M}_{x^{\alpha\beta}}$. Note that in the last line we have used that $\{\lambda_k\}_{k\geq 0}$ is subgeometric. 
\end{proof}

\subsection{An example of application}\label{sec.example-subcritical}

Let us first illustrate how one can apply \cref{thm.main-marcinkiewicz} on a non-trivial example. We assume that $\Lambda$ is a lacunary sequence. Let $\{\mu_n\}_{n\geq 0}$ and $\{c_n\}_{n\geq 0}$ be sequences of positive real numbers, that we are going to choose later on, and consider the operator $T$ defined by
\[
    Tf(t) = \sum_{n\geq 1} c_nf(t^{\mu_n}), 
\]
and let $\mu$ be the measure $(1-x)^{\gamma - 1}\mathrm{d}x$, where $\gamma >0$ is to be chosen later. 
This operator should be viewed as a multiplicative version of the operators considered in \cite{BW09}. 

As $T(t^{\lambda_k})=\sum_{n\geq 1} c_nt^{\lambda_k\mu_n}$, an application of \cref{thm:Lp-frames} gives  
\begin{equation}
    \label{res-strong-web}
    \|T(t^{\lambda_k})\|_{L^p(\mathrm{d}\nu_{\gamma - 1})}^p \simeq \sum_{n\geq 0} |c_n|^p \|t^{\lambda_k\mu_n}\|_{L^p(\mathrm{d}\nu_{\gamma-1})}^p \simeq \lambda_k^{-\gamma}\sum_{n\geq 1} \frac{|c_n|^p}{\mu_n^{\gamma}}. 
\end{equation}
Assume that the $c_n, \mu_n$ are chosen in a way that $C:=\sum_{n\geq 1} \frac{|c_n|^p}{\mu_n^{\gamma}}<\infty$. Then, since $\|t^{\lambda_k}\|_{L^{\frac{p}{\beta}}(\mathrm{d}\nu_{\alpha - 1})} \simeq \lambda_k^{-\frac{\alpha\beta}{p}}$, we see that by imposing $\gamma=\alpha\beta >0$ we can infer from \eqref{res-strong-web} that $T$ is of restricted strong type $p$ for any $p\geq 1$, therefore of strong type $r$ for any $r\geq 1$ by \cref{thm.main-marcinkiewicz}. 

Observe that this cannot be trivially obtained, as one would for instance apply the triangle inequality for $r>\beta$ and the strong type $r$ assumption, with $f(t) = \sum_{k\geq 0} a_kt^{\lambda_k}$, 
\[
    \|Tf\|_{L^r(\mathrm{d}\nu_{\gamma - 1})} \leqslant \sum_{k\geq 0} |a_k|\|T(t^{\lambda_k})\|_{L^r(\mathrm{d}\nu_{\gamma - 1})} \leqslant C \sum_{k\geq 0} |a_k|\|t^{\lambda_k}\|_{L^{\frac{r}{\beta}}(\mathrm{d}\nu_{\alpha})}. 
\]
Now, an application of \cref{thm:Lp-frames} yields 
\[
    \|f\|_{L^{\frac{r}{\beta}}(\mathrm{d}\nu_{\alpha})} \simeq \left(\sum_{k\geq 0} |a_k|^{\frac{r}{\beta}}\|t^{\lambda_k}\|^{\frac{r}{\beta}}_{L^{\frac{r}{\beta}}(\mathrm{d}\nu_{\alpha})}\right)^{\frac{\beta}{r}}, 
\]
from which the Hölder inequality implies 
\[
    \|Tf\|_{L^r(\mathrm{d}\nu_{\gamma - 1})} \leqslant C\left(\sum_{k\geq 1}1\right)^{\frac{r-\beta}{\beta}} \|f\|_{L^{\frac{r}{\beta}}(\mathrm{d}\nu_{\alpha})} = +\infty,  
\]
therefore not yielding the $L^r(\mathrm{d}\mu)$ estimate. 

\subsection{A counter-example}\label{sec.C-example-subcritical}

Let $\{\lambda_k\}_{k\geq 0}$ be a lacunary sequence, and $\beta \geq 1$. 
Our goal is to construct an operator $T$ defined by its action on $t^{\lambda_k}$ by 
\begin{equation}
    \label{def.T-kernel}
    T(t^{\lambda_k}) = \sum_{n\geq 0} c_n(k)t^{\lambda_n},
\end{equation}
where we are to choose the $c_n(k)$, and such that $T$ is of restricted strong type $r>\beta$ but not of strong type $r$. To this end, let $\alpha, \gamma >0$, $\varepsilon >0$ (to be chosen small enough in the proof) and choose $\mathrm{d}\mu=\mathrm{d}\nu_{\gamma - 1}$. Let also $N\geq 1$ be an integer. 

We consider
\[
    f_N(t):=\sum_{k=1}^N \lambda_k^{\frac{\beta\alpha}{r}}t^{\lambda_k}=\sum_{k=1}^N a_kt^{\lambda_k},
\]
which in virtue of \cref{thm:Lp-frames}, is such that
\begin{equation}
    \label{eq.fN}
    \|f_N\|_{L^{\frac{r}{\beta}}(\mathrm{d}\nu_{\gamma})} \simeq \left(\sum_{k=1}^N a_k^{\frac{r}{\beta}}\lambda_k^{-\alpha}\right)^{\frac{\beta}{r}} = \left(\sum_{k=1}^N1\right)^{\frac{\beta}{r}} \simeq N^{\frac{\beta}{r}}.
\end{equation}
We define $c_n(k)=\lambda_k^{-\frac{\beta\alpha}{r}}\lambda_n^{\frac{\gamma}{r}}\frac{\mathbf{1}_{1\leq n \leq k}}{n^{\frac{1+\varepsilon}{r}}}$. 
Recalling that $\|t^{\lambda_n}\|_{L^{r}(\mathrm{d}\nu_{\gamma-1})} \simeq \lambda_n^{-\frac{\gamma}{r}}$, we can compute by \cref{thm:Lp-frames}, 
\[
    \|T(t^{\lambda_k})\|_{L^r(\mathrm{d}\nu_{\gamma-1})} \simeq \left(\sum_{n\geq 0} |c_n(k)|^r \|t^{\lambda_n}\|^r_{L^{r}(\mathrm{d}\nu_{\gamma - 1})}\right)^{\frac{1}{r}} \lesssim  \lambda_k^{-\frac{\alpha\beta}{r}} \left(\sum_{n=1}^k\frac{1}{n^{1+\varepsilon}}\right)^{\frac{1}{r}}\lesssim \|t^{\lambda_k}\|_{L^{\frac{r}{\beta}}(\mathrm{d}\nu_{\alpha-1})},
\]
because $\|t^{\lambda_k}\|_{L^{\frac{r}{\beta}}(\mathrm{d}\nu_{\alpha-1})} \simeq \lambda_k^{-\frac{\alpha\beta}{r}}$. Therefore we conclude that $T$ defined in \eqref{def.T-kernel} is of restricted strong type $r$. 

However, writing $Tf_N = \sum_{n\geq 0} \left(\sum_{k=1}^Na_kc_n(k) \right) t^{\lambda_n}$ we infer:
\[
    \|Tf_N\|_{L^r(\mathrm{d}\nu_{\gamma-1})} \simeq \left(\sum_{n\geq 0}\left\vert\sum_{k=1}^Na_kc_n(k) \right\vert^r\|t^{\lambda_n}\|_{L^r(\mathrm{d}\nu_{\gamma - 1})}^r\right)^{\frac{1}{r}} \simeq \left(\sum_{n=1}^N\frac{1}{n^{1+\varepsilon}}\left(\sum_{k=n}^N1\right)^r\right)^{\frac{1}{r}}, 
\]
so that we can lower bound 
\[
    \|Tf_N\|_{L^r(\mathrm{d}\nu_{\gamma-1})} \gtrsim \left(\sum_{\frac{N}{3}\leq n\leq 
    \frac{2N}{3}}\frac{(N-n)^r}{n^{1+\varepsilon}}\right)^{\frac{1}{r}}\gtrsim N^{1-\frac{\varepsilon}{r}}
\]
which in conjunction with \eqref{eq.fN} and taking $N$ large enough as soon as $r>\beta$ and $\varepsilon \ll 1$ proves that $T$ is not of strong type $r$.

\section{Proofs for $0< \beta <1$}

\subsection{Proof of \cref{thm.interpol-supercritical}}

Let us first explain how the compactness of $T$ follows easily from the proof that $T$ is of strong type $r$. Consider the finite rank operator $T_n=TS_n$ where $S_n$ is defined for any $f(t)=\sum_{k\geq 0} a_kt^{\lambda_k}$ by $S_nf(t)=\sum_{k=0}^n a_kt^{\lambda_k}$. Then \cref{thm.interpol-supercritical} applied to $T-T_n$ proves that $T$ is of strong type $r$ with norm bounded by 
$\left(\sum_{k>n}\varepsilon_k\right)C_p^{1-\theta}C_q^{\theta}$, which goes to $0$ as $n\to \infty$. 

We are now going to prove the theorem, first in the integer case, and then in the non integer case. In the following computations we are going to prove that for any $f \in M_{\Lambda}$ there holds 
\[
    \|Tf\|_{L^r(\mathrm{d}\mu)} \lesssim \left(\sum_{k\geq 0}\varepsilon_k\right)\|f\|_{L^{\frac{r}{\beta}}(\mathrm{d}\nu_{\alpha - 1})}, 
\] 
and we are not going to track the dependance on $C_p$ and $C_q$. One can check as in the proof of \cref{thm.main-marcinkiewicz} that the implicit constant in the above is $C_p^{1-\theta}C_q^{\theta}$.

In this section we write $\lambda_k$ in place of $\lambda_{n_k}$ for notational convenience. 

\subsubsection{Integer case $r=n\geq 1$.}

Let us first deal with the case $n=1$, for which we simply use the triangle inequality, the assumption and Hölder's inequality: 
\begin{multline}\label{eq.m1}
    \|Tf\|_{L^1(\mathrm{d}\mu)} \leq \sum_{k\geq 0} \|Tf_k\|_{L^1(\mathrm{d}\mu)} \lesssim \sum_{k\geq 0} \varepsilon_k^{1-\beta}\|f_k\|_{L^{\frac{1}{\beta}}(\mathrm{d}\nu_{\alpha - 1})} \\ 
    \lesssim \left(\sum_{k\geq 0} \varepsilon_k \right)^{1-\beta}\left(\sum_{k\geq 0} \|f_k\|^{\frac{1}{\beta}}_{L^{\frac{1}{\beta}}(\mathrm{d}\nu_{\alpha - 1})}\right)^{\beta}, 
\end{multline}
and the conclusion follows from \cref{thm:Lp-frames}.

We assume now $n\geq 2$ and start by applying the triangle inequality:
\begin{align*}
    \|Tf\|_{L^n(\mathrm{d}\mu)}^n &\lesssim \int_{[0,1]} \left(\sum_{k \geq 0} |Tf_{k}(t)|\right)^n \,\mathrm{d}\mu(t) \\ 
    &= \int_{[0,1]} \left(\sum_{k \geq 0} \|f_{k}\|_{L^{\frac{n}{\beta}}(\mathrm{d}\nu_{\alpha-1})} A_{k}^{\gamma}(t)\lambda_{k}^{-\theta}A_{k}^{1-\gamma}(t)\lambda_{k}^{\theta}\right)^n \,\mathrm{d}\mu, 
\end{align*}
where $A_{k}(t) = \frac{|Tf_{k}(t)|}{\|f_{k}\|_{L^{\frac{n}{\beta}}(\mathrm{d}\nu_{\alpha-1})}}$ if $\|f_{k}\|_{L^{\frac{n}{\beta}}(\mathrm{d}\nu_{\alpha-1})} \neq 0$ and $A_k(t)=0$ otherwise, and $\theta >0$, $\gamma \in (0,1)$ are to be chosen in the proof. 
We will use the following bound on the norm of $A_k$ (for any $s\in (p,q)$),
\begin{equation}
    \label{eq.Ak-bound}
    \|A_{k}\|_{L^s(\mathrm{d}\mu)} \lesssim \varepsilon_k^{\frac{1-\beta}{s}} \lambda_{k}^{\alpha\beta(\frac{1}{n}-\frac{1}{s})}.
\end{equation}
The proof of \eqref{eq.Ak-bound} follows from \cref{interpolbis} applied with $\mathrm{d}\mu=\mathrm{d}\nu_{\alpha - 1}$ followed by \cref{prop:atom},  
\[
    \|A_{k}\|_{L^s(\mathrm{d}\mu)} \lesssim \frac{\|Tf_{k}\|_{L^s(\mathrm{d}\mu)}}{\|f_{k}\|_{L^{\frac{n}{\beta}}(\mathrm{d}\nu_{\alpha - 1})}} \lesssim \varepsilon_k^{\frac{1-\beta}{s}}\frac{\|f_{k}\|_{L^{\frac{s}{\beta}}(1-x)^{\alpha-1}\mathrm{d}x}}{\|f_{k}\|_{L^{\frac{n}{\beta}}(\mathrm{d}\nu_{\alpha - 1})}}. 
\]
The Hölder inequality $\ell^n \times \ell^{\frac{n}{n-1}}$ in the inner summation yields  
\[
    \|Tf\|_{L^n(\mathrm{d}\mu)}^n \lesssim \sum_{i \geq 0} \|f_{i}\|^n_{L^{\frac{n}{\beta}}(\mathrm{d}\nu_{\alpha - 1})}\lambda_i^{-n\theta}\int_{[0,1]} A_i^{\gamma n}(t)\left(\sum_{j \geq 0}\lambda_j^{\frac{n\theta}{n-1}} A_j^{{\frac{(1-\gamma)n}{n-1}}}(t)\right)^{n-1}\mathrm{d}\mu(t).
\] 
An application of Minkowski's inequality to the measure $A_i^{\gamma n}(t)\mathrm{d}\mu(t)$ now yields 
\[
    \|Tf\|_{L^n(\mathrm{d}\mu)}^n \lesssim \sum_{i \geq 0} \|f_{i}\|^n_{L^{\frac{n}{\beta}}(\mathrm{d}\nu_{\alpha-1})}\lambda_i^{-n\theta} \left(\sum_{j \geq 0}\lambda_j^{\frac{n\theta}{n-1}} \left(\int_{[0,1]}A_i^{\gamma n}(t)A_j^{(1-\gamma)n}(t)\mathrm{d}\mu(t)\right)^{\frac{1}{n-1}}\right)^{n-1}.
\]
Let us introduce the following quantity: 
\[
    C_i := \lambda_i^{-n\theta}\left(\sum_{j \geq 0}\lambda_j^{\frac{n\theta}{n-1}} \left(\int_{[0,1]}A_i^{\gamma n}(t)A_j^{(1-\gamma)n}(t)\mathrm{d}\mu(t)\right)^{\frac{1}{n-1}}\right)^{n-1}.
\]
Note that if we were able to prove the estimate $C_i \lesssim \varepsilon_i^{\frac{1-\beta}{r}}$, then Hölder's inequality and \cref{thm:Lp-frames} would be enough to conclude. We are not quite able to prove such estimate, but this is morally true on average: this is the main idea that underlines the following computations. We decompose $C_i=C_i^+ + C_i^-$ with  
\[
    C_i^+ := \lambda_i^{-n\theta}\left(\sum_{j>i}\lambda_j^{\frac{n\theta}{n-1}} \left(\int_{[0,1]}A_i^{\gamma n}(t)A_j^{{(1-\gamma)n}}(t)\mathrm{d}\mu(t)\right)^{\frac{1}{n-1}}\right)^{n-1},
\]
\[
    C_i^- := \lambda_i^{-n\theta}\left(\sum_{j=0}^{i}\lambda_j^{\frac{n\theta}{n-1}} \left(\int_{[0,1]}A_i^{\gamma n}(t)A_j^{(1-\gamma)n}(t)\mathrm{d}\mu(t)\right)^{\frac{1}{n-1}}\right)^{n-1}.
\]
We start with estimating $C_i^+$. To this end we use the Hölder inequality in $t$ with exponents $\frac{1}{\tau}$ and $\frac{1}{(1-\tau)}$ for some $\tau \in (0,1)$ to be chosen later. We find
\begin{align*}
    \int_{[0,1]}A_i^{\gamma n}(t)A_j^{(1-\gamma)n}(t)\mathrm{d}\mu(t) &\lesssim \|A_i\|_{L^{\frac{\gamma n}{\tau}}(\mathrm{d}\mu)}^{\gamma n}\|A_j\|_{L^{\frac{(1-\gamma) n}{(1-\tau)}}(\mathrm{d}\mu)}^{(1-\gamma)n} \\ 
    &\lesssim \left(\varepsilon_i^{\frac{(1-\beta)\tau}{\gamma n}}\lambda_i^{\alpha\beta (\frac{1}{n}-\frac{\tau}{\gamma n})}\right)^{\gamma n} \left(\varepsilon_j^{\frac{(1-\beta)(1-\tau)}{(1-\gamma)n}}\lambda_j^{\alpha\beta (\frac{1}{n}-\frac{1-\tau}{(1-\gamma)n})}\right)^{(1-\gamma)n} \\
    &\lesssim \varepsilon_i^{(1-\beta)\tau}\lambda_i^{\alpha\beta(\gamma - \tau)} \varepsilon_j^{(1-\beta)(1-\tau)}\lambda_j^{\alpha\beta(\tau - \gamma)},
\end{align*}
where we have used \eqref{eq.Ak-bound}. Finally, we arrive at
\begin{align*}
    C_i^+ &\lesssim \lambda_i^{-n\theta + \alpha \beta (\gamma - \tau)} \varepsilon_i^{(1-\beta)\tau}\left(\sum_{j>i} \lambda_j^{\frac{n\theta +\alpha \beta (\tau - \gamma)}{n-1}}\varepsilon_j^{\frac{(1-\tau)(1-\beta)}{n-1}}\right)^{n-1} \\
    & = \lambda_i^{(n-1)\kappa} \varepsilon_i^{(1-\beta)\tau}\left(\sum_{j>i} \lambda_j^{-\kappa}\varepsilon_j^{\frac{(1-\tau)(1-\beta)}{n-1}}\right)^{n-1},  
\end{align*} 
with $\kappa := \frac{-n\theta + \alpha \beta (\gamma - \tau)}{n-1}$ which we can assume to be positive, as indeed one can take $\gamma = \tau + \eta$ for some $\frac{n\theta}{\alpha\beta}< \eta < 1-\tau$ small enough and $\theta >0$ accordingly.
Applying \cref{lem.sequence} yields 
\begin{equation}\label{ep7b1}
    \sum_{i\geq 0} \|f_{i}\|_{L^{\frac{n}{\beta}}(\mathrm{d}\nu_{\alpha -1})}^{n}C_{i}^{+} \lesssim \sum_{k_{1}, \ldots, k_{n-1}\geq 0} \sum_{i\geq 0} \|f_{i}\|^{n}_{L^{\frac{n}{\beta}}(\mathrm{d}\nu_{\alpha - 1})}\prod_{j=1}^{n-1}\left[\varepsilon_{i}^{\frac{\tau(1-\beta)}{(n-1)}}\delta_{k_{j}}(i)\left(\frac{\lambda_{i}}{\lambda_{i+k_{j}A}}\right)^{\kappa}\right],
\end{equation}
where the $\delta_{k_j}(i)$ are defined in \cref{lem.sequence}. Since the sequence $\{\lambda_{k}\}_{k\geq 0}$ is lacunary, there is a $q >1$ such that 
\begin{equation}\label{ep8b1}
    \prod_{j=1}^{n-1}\left(\frac{\lambda_{i}}{\lambda_{i+k_{j}A}}\right)^{\kappa} \lesssim \prod_{j=1}^{n-1}\frac{1}{q^{\kappa A k_{j}}}.
\end{equation}
Using \eqref{ep8b1} in \eqref{ep7b1} provides us with 
\begin{equation}\label{ep9b1}
    \sum_{i\geq 0} \|f_{i}\|_{L^{\frac{n}{\beta}}(\mathrm{d}\nu_{\alpha - 1})}^{n}C_{i}^{+} \lesssim \sum_{k_{1},\ldots, k_{n-1}\geq 0}\prod_{j=1}^{n-1}\frac{1}{q^{k_{j}A\kappa}}\sum_{i\geq 0}\|f_{i}\|^{n}_{L^{\frac{n}{\beta}}(\mathrm{d}\nu_{\alpha - 1})}\prod_{j=1}^{n-1}\left[\varepsilon_{i}^{\frac{\tau(1-\beta)}{(n-1)}}\delta_{k_{j}}(i)\right].
\end{equation}
Next, we use the Hölder inequality with exponent $\frac{1}{\beta}$ and $\frac{1}{1-\beta}$ to bound 
\begin{multline*}
    \sum_{i\geq 0} \|f_{i}\|^{n}_{L^{\frac{n}{\beta}}(\mathrm{d}\nu_{\alpha - 1})}\prod_{j=1}^{n-1}\left[\varepsilon_{i}^{\frac{\tau(1-\beta)}{(n-1)}}\delta_{k_{j}}(i)\right]\leq \left(\sum_{i\geq 0} \|f_{i}\|^{\frac{n}{\beta}}_{L^{\frac{n}{\beta}}(\mathrm{d}\nu_{\alpha - 1})}\right)^{\beta}\left(\sum_{i\geq 0}\prod_{j=1}^{n-1}\left[\varepsilon_{i}^{\frac{\tau}{(n-1)}}\delta_{k_{j}}(i)^{\frac{1}{(1-\beta)}}\right]\right)^{1-\beta} \\ 
    \lesssim  \|f\|^{n}_{L^{\frac{n}{\beta}}(\mathrm{d}\nu_{\alpha - 1})}\underbrace{\left(\sum_{i\geq 0}\prod_{j=1}^{n-1}\left[\varepsilon_{i}^{\frac{\tau}{(n-1)}}\left(\sum_{i+k_{j}A\leq \ell<i+(k_{j}+1)A} \varepsilon_{\ell}\right)^{\frac{(1-\tau)}{(n-1)}}\right]\right)^{1-\beta}}_{M}
\end{multline*}
where the last inequality stems for an application of \cref{thm:Lp-frames}. Then, since $(n-1)\left(\frac{\tau}{n-1}+\frac{1-\tau}{n-1}\right)$, we can use the Hölder inequality $\prod_{j=1}^{n-1}L^{\frac{n-1}{\tau}} \times L^{\frac{n-1}{1-\tau}}$ to bound $M$:
\[
    M \lesssim \prod_{j=1}^{n}\left(\sum_{i\geq 0} \varepsilon_{i}\right)^{\frac{\tau(1-\beta)}{(n-1)}}\left(\sum_{i\geq 0}\sum_{i+k_{j}A\leq \ell<i+(k_{j}+1)A}\varepsilon_{\ell}\right)^{\frac{(1-\tau)(1-\beta)}{(n-1)}}.
\]
Observe that 
\[
    \sum_{i\geq 0}\sum_{i+k_{j}A\leq \ell<i+(k_{j}+1)A}\varepsilon_{\ell} \leq \sum_{\ell\geq k_{j}A}\varepsilon_{\ell}\sum_{\ell-(k_{j}+1)A<i\leq \ell-k_{j}A}1 \leq A\sum_{\ell\geq 0}\varepsilon_{\ell},
\]
so that 
\[
    M\lesssim \left(\sum_{\ell\geq 0}\varepsilon_{\ell}\right)^{\tau (1-\beta)}\left(\sum_{\ell\geq 0}\varepsilon_{\ell}\right)^{(1-\tau)(1-\beta)} = \left(\sum_{\ell\geq 0}\varepsilon_{\ell}\right)^{1-\beta} = C_{\varepsilon}^{1-\beta}.
\]
It follows that 
\begin{multline}\label{c2b1} 
    \sum_{i\geq 0} \|f_{i}\|_{L^{\frac{n}{\beta}}(\mathrm{d}\nu_{\alpha - 1})}^{n}C_{i}^{+} \lesssim \|f\|^{n}_{L^{\frac{n}{\beta}}(\mathrm{d}\nu_{\alpha - 1})} \sum_{k_{1},\ldots, k_{n-1}\geq 0}\prod_{j=1}^{n-1}\frac{1}{q^{k_{j}A\kappa}}\\
    \lesssim C_{\varepsilon}^{1-\beta}\|f\|^{n}_{L^{\frac{n}{\beta}}(\mathrm{d}\nu_{\alpha - 1})}\left(\sum_{k\geq 0}\frac{1}{q^{kA\kappa}}\right)^{n-1} \lesssim C_{\varepsilon}^{1-\beta}\|f\|^{n}_{L^{\frac{n}{\beta}}(\mathrm{d}\nu_{\alpha - 1})},
\end{multline}
since $q>1$. 

In order to estimate $C_i^-$ we take advantage of the above computations, with $\tau = \gamma$ and $\kappa = \frac{n\theta}{n-1} >0$ we arrive at  
\[
    C_i^- \lesssim \lambda_i^{-n\theta} \varepsilon_i^{(1-\beta)\gamma}\left(\sum_{j=0}^{i} \lambda_j^{\frac{n\theta}{n-1}}\varepsilon_j^{\frac{(1-\gamma)(1-\beta)}{n-1}}\right)^{n-1} = \lambda_i^{-(n-1)\kappa} \varepsilon_i^{(1-\beta)\gamma}\left(\sum_{j=0}^i \lambda_j^{\kappa}\varepsilon_j^{\frac{(1-\gamma)(1-\beta)}{n-1}}\right)^{n-1}.
\]
From \cref{lem.sequence} we infer 
\begin{equation*}
    \sum_{i\geq 0} \|f_{i}\|_{L^{\frac{n}{\beta}}(\mathrm{d}\nu_{\alpha - 1})}^{n}C_{i}^{-} \lesssim  \sum_{i\geq 0}\|f_{i}\|^{n}_{L^{\frac{n}{\beta}}(\mathrm{d}\nu_{\alpha - 1})}\sum_{k_{1}, \ldots, k_{n-1}\leq \frac{i}{A}} \prod_{j=1}^{n-1}\varepsilon_{i}^{\frac{\gamma(1-\beta)}{(n-1)}}\delta_{k_{j}}(i)\left(\frac{\lambda_{i-k_jA}}{\lambda_{i}}\right)^{\kappa}.
\end{equation*}
Since the sequence $\{\lambda_{k}\}_{k\geq 0}$ is lacunary, there is a $q > 1$ such that 
\begin{equation}\label{ep8b}
    \prod_{j=1}^{n-1}\left(\frac{\lambda_{i-k_jA}}{\lambda_{i}}\right)^{\kappa} \lesssim \prod_{j=1}^{n-1}\frac{1}{q^{\kappa A k_{j}}}.
\end{equation}
Then we can continue the estimates as in the case $C_i^+$. In the end we arrive at 
\[
    \|Tf\|_{L^{n}(\mathrm{d}\mu)}^{n} \lesssim \sum_{i\geq 0} \|f_{i}\|_{L^{\frac{n}{\beta}}(\mathrm{d}\nu_{\alpha - 1})}^{n}C_{i}^{-} + \sum_{i\geq 0} \|f_{i}\|_{L^{\frac{n}{\beta}}(\mathrm{d}\nu_{\alpha - 1})}^{n}C_{i}^{+} \lesssim \left(\sum_{k\geq 0} \varepsilon_k\right)^{1-\beta}\|f\|_{L^{\frac{n}{\beta}}(\mathrm{d}\nu_{\alpha - 1})}^{n}.
\]

\subsubsection{Case of non-integer $r$} 
Let $n$ be the natural integer such that $r \in (n,n+1)$ and write $r=\vartheta n+(1-\vartheta)(n+1)$ for some $\vartheta \in (0,1)$. We start with an application of Hölder's inequality to write: 
\[
    \|Tf\|_{L^r(\mathrm{d}\mu)}^r \lesssim \int_{[0,1]} \left(\sum_{k\geq 0} |Tf_k|\right)^r \,\mathrm{d}\mu \lesssim \int_{[0,1]} \left(\sum_{k\geq 0} |Tf_k|^{\frac{r}{n}}\right)^{n\vartheta}\left(\sum_{k\geq 0} |Tf_k|^{\frac{r}{n+1}}\right)^{(n+1)(1-\vartheta)} \,\mathrm{d}\mu, 
\]
so another application of Hölder's inequality $L^{\frac{1}{\vartheta}}(\mathrm{d}\mu) \times L^{\frac{1}{1-\vartheta}}(\mathrm{d}\mu)$ gives 
\[
    \|Tf\|_{L^r(\mathrm{d}\mu)}^r \lesssim \left(\int_{[0,1]} \left(\sum_{k\geq 0} |Tf_k|^{\frac{r}{n}}\right)^{n} \mathrm{d}\mu \right)^{\vartheta} \left(\int_{[0,1]} \left(\sum_{k\geq 0} |Tf_k|^{\frac{r}{n+1}}\right)^{n+1} \mathrm{d}\mu \right)^{1-\vartheta}.
\]
It remains to prove that 
\begin{equation}
    \label{eq.non-int-main}
    \int_{[0,1]} \left(\sum_{k\geq 0} |Tf_k|^{\frac{r}{m}}\right)^{m} \mathrm{d}\mu \lesssim C_{\varepsilon}^{1-\beta}\|f\|_{L^{\frac{r}{\beta}}}^r, 
\end{equation}
where $C_{\varepsilon} = \sum_{k\geq 0}\varepsilon_k$ and for $m\in\{n,n+1\}$.  

When $m=1$ one can use \eqref{eq.m1}. Assume now $m\geq 2$ and let us estimate \eqref{eq.non-int-main}. We start by writing, for some $\theta >0$ and $\gamma \in (0,1)$ to be chosen later: 
\[
    H_m:=\int_{[0,1]} \left(\sum_{k\geq 0} |Tf_k|^{\frac{r}{m}}\right)^{m} \mathrm{d}\mu = \int_{[0,1]} \left(\sum_{k\geq 0} \|f_k\|_{L^{\frac{r}{\beta}}(\mathrm{d}\nu_{\alpha - 1})}^{\frac{r}{m}}A_k^{\gamma\frac{r}{m}}\lambda_k^{-\theta}A_k^{(1-\gamma)\frac{r}{m}}\lambda_k^{\theta}\right)^{m} \mathrm{d}\mu
\]
with $A_k(t) = \frac{|Tf_k(t)|}{\|f_k\|_{L^{\frac{r}{\beta}}(\mathrm{d}\nu_{\alpha - 1})}}$ if $\|f_{k}\|_{L^{\frac{r}{\beta}}(\mathrm{d}\nu_{\alpha-1})} \neq 0$ and $A_k(t)=0$ otherwise. 
Next, we apply Hölder's inequality $\ell^m \times \ell^{\frac{m}{m-1}}$ in the inner summation  
\begin{align*}
    H_m &\lesssim \sum_{i \geq 0} \|f_{i}\|^r_{L^{\frac{r}{\beta}}(\mathrm{d}\nu_{\alpha - 1})}\lambda_i^{-m\theta}\int_{[0,1]} A_i^{\gamma r}(t)\left(\sum_{j \geq 0}\lambda_j^{\frac{m\theta}{m-1}} A_j^{{\frac{(1-\gamma)r}{m-1}}}(t)\right)^{m-1}\mathrm{d}\mu(t) \\
    & \lesssim \sum_{i \geq 0} \|f_{i}\|^r_{L^{\frac{r}{\beta}}(\mathrm{d}\nu_{\alpha -1})}\underbrace{\lambda_i^{-m\theta} \left(\sum_{j \geq 0}\lambda_j^{\frac{m\theta}{m-1}} \left(\int_{[0,1]}A_i^{\gamma r}(t)A_j^{(1-\gamma)r}(t)\mathrm{d}\mu(t)\right)^{\frac{1}{m-1}}\right)^{m-1}}_{C_i},
\end{align*}
where the last inequality stems for an application of the Minkowski inequality. 
We split $C_i=C_i^+ + C_i^-$ where 
\[
    C_i^+ := \lambda_i^{-m\theta}\left(\sum_{j>i}\lambda_j^{\frac{m\theta}{m-1}} \left(\int_{[0,1]}A_i^{\gamma r}(t)A_j^{{(1-\gamma)r}}(t)\mathrm{d}\mu(t)\right)^{\frac{1}{m-1}}\right)^{m-1},
\]
\[
    C_i^- := \lambda_i^{-m\theta}\left(\sum_{j=0}^{i}\lambda_j^{\frac{m\theta}{m-1}} \left(\int_{[0,1]}A_i^{\gamma r}(t)A_j^{(1-\gamma)r}(t)\mathrm{d}\mu(t)\right)^{\frac{1}{m-1}}\right)^{m-1}.
\]
Again, we start with estimating $C_i^+$, as this is the most delicate term. To this end we use the Hölder inequality in $t$ with exponents $\frac{1}{\tau}$ and $\frac{1}{(1-\tau)}$ for some $\tau \in (0,1)$ to be chosen later. A combination of \cref{interpolbis} (applied with $s=\frac{\gamma}{\tau}r$ and $s=\frac{1-\gamma}{1-\tau}r$) and \cref{prop:atom} yields 
\begin{align*}
    \int_{[0,1]}A_i^{\gamma r}(t)A_j^{(1-\gamma)r}(t)\mathrm{d}\mu(t) &\lesssim \|A_i\|_{L^{\frac{\gamma r}{\tau}}(\mathrm{d}\mu)}^{\gamma r}\|A_j\|_{L^{\frac{(1-\gamma) r}{(1-\tau)}}(\mathrm{d}\mu)}^{(1-\gamma)r} \\
    &\lesssim \left(\varepsilon_i^{\frac{(1-\beta)\tau}{\gamma r}}\lambda_i^{\alpha\beta (\frac{1}{r}-\frac{\tau}{\gamma r})}\right)^{\gamma r} \left(\varepsilon_j^{\frac{(1-\beta)(1-\tau)}{(1-\gamma)r}}\lambda_j^{\alpha\beta (\frac{1}{r}-\frac{1-\tau}{(1-\gamma)r})}\right)^{(1-\gamma)r} \\
    &\lesssim \varepsilon_i^{(1-\beta)\tau}\lambda_i^{\alpha\beta(\gamma - \tau)} \varepsilon_j^{(1-\beta)(1-\tau)}\lambda_j^{\alpha\beta(\tau - \gamma)},
\end{align*}
Therefore we have 
\begin{align*}
    C_i^+ &\lesssim \lambda_i^{-m\theta + \alpha \beta (\gamma - \tau)} \varepsilon_i^{(1-\beta)\tau}\left(\sum_{j>i} \lambda_j^{\frac{m\theta +\alpha \beta (\tau - \gamma)}{m-1}}\varepsilon_j^{\frac{(1-\tau)(1-\beta)}{m-1}}\right)^{m-1} \\
    &= \lambda_i^{(m-1)\kappa} \varepsilon_i^{(1-\beta)\tau}\left(\sum_{j>i} \lambda_j^{-\kappa}\varepsilon_j^{\frac{(1-\tau)(1-\beta)}{m-1}}\right)^{m-1},
\end{align*}
with $\kappa := -m\theta + \alpha \beta (\gamma - \tau)$ which we can assume to be positive, as indeed one can take $\gamma = \tau + \eta$ for some $\frac{m\theta}{\alpha\beta}<\eta < 1-\gamma$ small enough and $\theta >0$ accordingly. 

We are in a position to apply \cref{lem.sequence} and reproduce the estimates of the integer case to obtain the counterpart of \eqref{ep9b1}, which gives 
\begin{equation}\label{ep9b2}
    \sum_{i\geq 0} \|f_{i}\|_{L^{\frac{r}{\beta}}(\mathrm{d}\nu_{\alpha - 1})}^{r}C_{i}^{+} \lesssim \sum_{k_{1},\ldots, k_{m-1}\geq 0}\prod_{j=1}^{m-1}\frac{1}{q^{k_{j}A\kappa}}\sum_{i\geq 0}\|f_{i}\|^{r}_{L^{\frac{r}{\beta}}(\mathrm{d}\nu_{\alpha - 1})}\prod_{j=1}^{m-1}\left[\varepsilon_{i}^{\frac{\tau(1-\beta)}{(m-1)}}\delta_{k_{j}}(i)\right].
\end{equation}

Similarly fo the estimation of $C_i^{-}$, we can take advantage of the previous computations with $\tau = \gamma$ so that: 
\begin{align*}
    C_i^- &\lesssim \lambda_i^{-m\theta} \varepsilon_i^{(1-\beta)\gamma}\left(\sum_{j=0}^{i} \lambda_j^{\frac{m\theta}{m-1}}\varepsilon_j^{\frac{(1-\gamma)(1-\beta)}{m-1}}\right)^{m-1} \\
    &= \lambda_i^{-(m-1)\kappa} \varepsilon_i^{(1-\beta)\gamma}\left(\sum_{j>i} \lambda_j^{\kappa}\varepsilon_j^{\frac{(1-\gamma)(1-\beta)}{m-1}}\right)^{m-1},
\end{align*}
with $\kappa = \frac{m\theta}{m-1} >0$. Applying \cref{lem.sequence} and reproducing the computations of the integer case leads to 
\begin{equation}\label{ep9b3}
    \sum_{i\geq 0} \|f_{i}\|_{L^{\frac{r}{\beta}}(\mathrm{d}\nu_{\alpha -1})}^{r}C_{i}^{-} \lesssim  \sum_{i\geq 0}\|f_{i}\|^{r}_{L^{\frac{r}{\beta}}(\mathrm{d}\nu_{\alpha -1})}\sum_{k_{1}, \ldots, k_{m-1}\leq \frac{i}{A}} \prod_{j=1}^{m-1}\varepsilon_{i}^{\frac{\gamma(1-\beta)}{(m-1)}}\delta_{k_{j}}(i)\left(\frac{\lambda_{i-k_jA}}{\lambda_{i}}\right)^{\kappa}.
\end{equation}
From \eqref{ep9b2} and \eqref{ep9b3}, one can use the same computations as in the integer case and finally obtain 
\[
    H_m \lesssim \sum_{i\geq 0} \|f_{i}\|_{L^{\frac{r}{\beta}}(\mathrm{d}\nu_{\alpha - 1})}^{r}C_{i}^{-} + \sum_{i\geq 0} \|f_{i}\|_{L^{\frac{r}{\beta}}(\mathrm{d}\nu_{\alpha - 1})}^{r}C_{i}^{+} \lesssim \left(\sum_{k\geq 0}\varepsilon_k\right)^{1-\beta}\|f\|_{L^{\frac{r}{\beta}}((1-x)^{\alpha-1}\mathrm{d}x)}^{r}.
\]   

\subsection{Consequences of \cref{thm.interpol-supercritical}}

\begin{proof}[Proof of \cref{th5}]
The proof if very similar to that of \cref{coro.embeddings-subcritical}, therefore we only outline it. Again we write $\lambda_k$ in place of $\lambda_{n_k}$. First, remark that following \eqref{eq.Bp-use}, if $A_{\frac{p}{N},\Lambda}(\alpha,\beta)$ holds for some $p>1$, then it follows that $\imath_{\mu}$ is of restricted strong type $r$ for all $r \geq p$ and therefore is of strong type $r$ for all $r>p$ by using \cref{thm.interpol-supercritical}.

If $A_{\frac{1}{N},\Lambda}(\alpha,\beta)$ holds, then we further note that $\imath_{\mu}$ is of strong type $1$, as we can write with the help of the triangle inequality, \cref{thm:muntz-bound} and \cref{prop:atom}: 
\begin{multline*}
    \|f\|_{L^{1}(\mathrm{d}\mu)} \lesssim  \int_{[0,1]}\sum_{k\geq 0} |f_{k}(t)|\,\mathrm{d}\mu(t) \lesssim \int_{[0,1]}\sum_{k\geq 0} \|f_{k}\|_{\infty}t^{\frac{\lambda_{k}}{N}}\,\mathrm{d}\mu(t)\\
    \lesssim \int_{[0,1]}\sum_{k\geq 0} \|f_{k}\|_{L^{\frac{1}{\beta}}(\mathrm{d}\nu_{\alpha-1})}\lambda_{k}^{\frac{\alpha \beta}{n}}t^{\frac{\lambda_{k}}{N}}\,\mathrm{d}\mu(t),
\end{multline*}
which after an application of Hölder's inequality (since $0<\beta <1$) provides us with  
\begin{multline*}
    \|f\|_{L^{1}(\mathrm{d}\mu)} \lesssim \sum_{k\geq 0} \|f_{k}\|_{L^{\frac{1}{\beta}}(\mathrm{d}\nu_{\alpha-1})}\lambda_{k}^{\alpha \beta}\int_{[0,1]}t^{\frac{\lambda_{k}}{N}}\,\mathrm{d}\mu(t)\\
    \lesssim \left(\sum_{k\geq 0} \|f_{k}\|_{L^{\frac{1}{\beta}}(\mathrm{d}\nu_{\alpha-1})}^\frac{1}{\beta}\right)^{\beta}\left(\sum_{k\geq 0}\lambda_{k}^{\frac{\alpha \beta}{1-\beta}}\left(\int_{[0,1]}t^{\frac{\lambda_{k}}{N}}\,\mathrm{d}\mu(t)\right)^{\frac{1}{1-\beta}}\right)^{1-\beta} \lesssim \|f\|_{L^{\frac{1}{\beta}}(\mathrm{d}\nu_{\alpha-1})},
\end{multline*} 
where we have used \cref{thm:Lp-frames} and the $A_{\frac{1}{N},\Lambda}(\alpha,\beta)$ condition.
\end{proof}

\begin{proof}[Proof of \cref{th6}]
With slight abuse of notation we write $\lambda_k$ in place of $\lambda_{n_k}$.

The implication $(i) \Longrightarrow (ii)$ follows from a duality argument based on the use of \cref{thm:Lp-frames}. More precisely, note that each $f\in M_{\Lambda}$ of the form $f=\displaystyle\sum_{k \geq 0}a_{k}\lambda_{k}^{\frac{\alpha \beta}{p}}t^{\lambda_{k}}$ and that \cref{thm:Lp-frames} writes $\|f\|^{\frac{p}{\beta}}_{L^{\frac{p}{\beta}}(\mathrm{d}\nu_{\alpha-1})} \simeq \displaystyle\sum_{k\geq 0} |a_{k}|^{\frac{p}{\beta}}\lambda_k^{-\alpha}$. 
Without loss of generality, assume $a_k \geq 0$ and use the embedding $\ell^1 \hookrightarrow \ell^p$ which holds since $p\geq 1$: this allows us to bound using (\textit{i}),    
\begin{align*}
    \int_{[0,1]} \sum_{k\geq 0}a_{k}^{p}\lambda_{k}^{\alpha \beta}t^{p\lambda_{k}}\,\mathrm{d}\mu(t) & \leq \int_{[0,1]} \left\vert \sum_{k\geq 0} a_{k}\lambda_{k}^{\frac{\alpha \beta}{p}}t^{\lambda_{k}} \right\vert^{p} \,\mathrm{d}\mu(t) \lesssim \left(\sum_{k\geq 0} \vert a_{k} \vert ^{\frac{p}{\beta}}\right)^{\beta} = \left\|\{a_k^p\}_{k\geq 0}\right\|_{\ell^{\frac{1}{\beta}}}, 
\end{align*}
where we have used (\textit{i}) and \cref{thm:Lp-frames}. Using the duality $\ell^{\frac{1}{\beta}}\times \ell^{\frac{1}{1-\beta}}$, we finally obtain
\[
    \sum_{k\geq 0} \lambda_{k}^{\frac{\alpha \beta}{1-\beta}}\left(\int_{[0,1]}t^{p\lambda_{k}}\,\mathrm{d}\mu(t)\right)^{\frac{1}{1-\beta}}<+\infty,
\]
that is, $\mu$ satifies the condition $A_{p,\Lambda}(\alpha,\beta)$ as claimed. 

For the implication $(ii) \Longrightarrow (i)$, by assumption we know that $\mu$ satisfies the assumption $A_{p,\Lambda}(\alpha,\beta)$. Let $\varepsilon >0$ and let us prove that $\mu$ also enjoys property $A_{\varepsilon,\Lambda}(\alpha,\beta)$, at which point the conclusion follows from \cref{th5}.

Let $\rho \in (0,1)$ and $\eta >\alpha\beta$. Note that for all $t \in [0,1)$ and $\rho\in [0,1)$ there is are constants $C_{\varepsilon}, C_{\varepsilon}'>0$ such that $\frac{C_{\varepsilon}}{(1-(\rho t))^{\eta}} \leq \frac{1}{(1-(\rho t)^{\varepsilon})^{\eta}} \leq \frac{C_{\varepsilon}'}{(1-(\rho t))^{\eta}}$. Since $\{\lambda_{k}\}_{k\geq 0}$ is a lacunary and subgeometric sequence, we can apply \cite[Lemma 2.10]{GaLe} (note that we use the subgeometric assumption to use it) to write 
\begin{align*}
    F_{\eta}(\rho,1) := \int_{[0,1]}\frac{\mathrm{d}\mu(t)}{(1-\rho t)^{\eta}} &\simeq \int_{[0,1]}\sum_{k\geq 0} \lambda_{k}^{\eta}(t\rho)^{\lambda_{k}} \,\mathrm{d}\mu(t) = \sum_{k \geq 0}\lambda_{k}^{\eta}\rho^{\lambda_{k}}\int_{[0,1]}t^{\lambda_{k}} \,\mathrm{d}\mu(t) \\
    & \simeq F_{\eta}(\rho,\varepsilon) = \int_{[0,1]}\sum_{k\geq 0} \lambda_{k}^{\eta}(t\rho)^{\varepsilon\lambda_{k}} \,\mathrm{d}\mu(t) \simeq \sum_{k \geq 0}\lambda_{k}^{\eta}\rho^{\varepsilon\lambda_{k}}\int_{[0,1]}t^{\varepsilon\lambda_{k}} \,\mathrm{d}\mu(t). 
\end{align*}
Fix $\kappa>0$ such that $\eta=\alpha \beta+\kappa(1-\beta)$, which is possible since $\eta > \alpha\beta$. Then we start from  
\[
    \|F_{\eta}(\rho,1)\|_{L_{\rho}^{\frac{1}{1-\beta}}(\mathrm{d}\nu_{\kappa - 1})}^{\frac{1}{1-\beta}} \simeq \|F_{\eta}(\rho,\varepsilon)\|_{L_{\rho}^{\frac{1}{1-\beta}}(\mathrm{d}\nu_{\kappa - 1})}^{\frac{1}{1-\beta}}, 
\]
and use the above to rewrite it as 
\[
    \sum_{k\geq 0}\lambda_{k}^{\frac{\eta}{1-\beta}-\kappa}\left(\int_{[0,1]}t^{\lambda_{k}}\,\mathrm{d}\mu(t)\right)^{\frac{1}{1-\beta}} \simeq \sum_{k\geq 0}\lambda_{k}^{\frac{\eta}{1-\beta}-\kappa}\left(\int_{[0,1]}t^{\varepsilon \lambda_{k}}\,\mathrm{d}\mu(t)\right)^{\frac{1}{1-\beta}}
\]
The conclusion now comes from the fact that $\frac{\eta}{1-\beta}-\kappa=\frac{\alpha\beta}{1-\beta}$.
\end{proof}

\subsection{An example of application}\label{sec.examples-supercritical}

Let $\alpha >0$, $\beta \in (0,1)$, $\gamma=\alpha\beta$, $\mathrm{d}\mu=\mathrm{d}\nu_{\gamma - 1}$. Assume that $\Lambda = \{\lambda_k\}_{k\geq 0}$ is a lacunary sequence. Consider $p<r<q$ and assume $\frac{q}{p}(1-\beta)<1$. Let $f= \sum_{k\geq 0} a_k t^{\lambda_k}$, and define the operator $T$ by 
\[
    T(t^{\lambda_k}) := \sum_{n\geq k} (\varepsilon_k\varepsilon_n)^{\frac{1-\beta}{2p}}t^{\lambda_n}. 
\]
Let us fix $\varepsilon_k=\frac{1}{k\log ^2 k}$. 
Remark first that for $s\in\{p,q\}$ there holds  
\[
    \|T(t^{\lambda_k})\|_{L^s(\mathrm{d}\nu_{\gamma - 1})} \simeq \left(\sum_{n\geq k} (\varepsilon_k\varepsilon_n)^{\frac{s(1-\beta)}{2p}} \lambda_n^{-\gamma} \right)^{\frac{1}{s}} \simeq \varepsilon_k^{\frac{(1-\beta)}{q}}\lambda_k^{-\frac{\gamma}{s}} \simeq \varepsilon_k^{\frac{(1-\beta)}{p}}\|t^{\lambda_k}\|_{L^{\frac{s}{\beta}}(\mathrm{d}\nu_{\alpha - 1})},   
\]
where we have used that $\{\varepsilon_n^{\frac{s(1-\beta)}{2q}}\lambda_n^{-\gamma}\}_{n\geq 0}$ decays exponentially fast. It follows that $T$ satisfies the assumptions of \cref{thm.interpol-supercritical}. Hence, $T$ is of strong type $r$. This information seems however difficult to obtain by arguing directly (even in our lacunary setting) as one would typically write 
\[
    Tf(t) = \sum_{n \geq 0} \sum_{0\leq k\leq n} a_k (\varepsilon_k\varepsilon_n)^{\frac{1-\beta}{2q}} t^{\lambda_n}
\]
and apply \cref{thm:Lp-frames} to bound  
\begin{align*}
    \|Tf\|_{L^r(\mathrm{d}\nu_{\gamma - 1})} &\simeq \left(\sum_{n\geq 0} \left\vert \sum_{0\leq k \leq n} a_k (\varepsilon_k\varepsilon_n)^{\frac{1-\beta}{2p}} \right\vert^{r} \lambda_n^{-\gamma}\right)^{\frac{1}{r}}\\
    &\leq \left(\sum_{n\geq 0} \varepsilon_n^{\frac{r(1-\beta)}{2p}}\lambda_n^{-\gamma} \left(\sum_{k\geq 0}|a_k|^{\frac{r}{\beta}}\lambda_k^{-\alpha} \right)^{\beta} \left(\sum_{0\leq k \leq n} \varepsilon_k^{\frac{r(1-\beta)}{2p(r-\beta)}}\lambda_k^{\frac{\alpha\beta}{r-\beta}}\right)^{r-\beta}\right)^{\frac{1}{r}} \\ 
    &\leq \left(\sum_{n\geq 0} \varepsilon_n^{\frac{r(1-\beta)}{p}}\right)^{\frac{1}{r}} \left(\sum_{k\geq 0}|a_k|^{\frac{r}{\beta}}\lambda_k^{-\alpha} \right)^{\frac{\beta}{r}}, 
\end{align*}
where we have used Hölder's inequality and the exponential motonicity of the sequence $\{\varepsilon_k^{\frac{r(1-\beta)}{2p(r-\beta)}}\lambda_k^{\frac{\alpha\beta}{r-\beta}}\}_{k\geq 0}$. Remark that this can be rewritten as 
\[
    \|Tf\|_{L^r(\mathrm{d}\nu_{\gamma - 1})} \lesssim \left(\sum_{n\geq 0} \varepsilon_n^{\frac{r(1-\beta)}{p}} \right)^{\frac{1}{r}}\|f\|_{L^{\frac{r}{\beta}}(\mathrm{d}\nu_{\alpha - 1})}, 
\]
but that the series in $n$ diverges because $\frac{r}{q}(1-\beta)<\frac{q}{p}(1-\beta)<1$.

\subsection{A counter-example}\label{sec.counter-examples-supercritical}

We consider \eqref{def.T-kernel} and $f_N(t) = \sum_{k=1}^{N} a_k t^{\lambda_k}$ with 
\[
    c_n(k)= \frac{\mathbf{1}_{1\leq n \leq k}}{n^{\frac{1+\varepsilon}{r}}k^{\frac{(1-\beta)(1+\eta)}{r}}}\lambda_k^{-\frac{\alpha\beta}{r}}\lambda_n^{\frac{\gamma}{r}} \quad\text{and } a_k=\lambda_k^{\frac{\alpha\beta}{r}}\mathbf{1}_{1\leq k \leq N},
\]
where $r>1$, $N\geq 1$ which will be taken large enough, and $\varepsilon, \eta >0$ will be chosen small enough. We also take $\mathrm{d}\mu=\mathrm{d}\nu_{\gamma - 1}$.  
One can check that by applying \cref{thm:Lp-frames} we have 
\begin{align*}
    \|T(t^{\lambda_k})\|_{L^r(\mathrm{d}\nu_{\gamma - 1})} &\simeq \left(\sum_{n\geq 1} \lambda_n^{-\gamma} |c_n(k)|^r\right)^{\frac{1}{r}}\\ 
    &\simeq \left(\sum_{n=1}^k \frac{1}{n^{1+\varepsilon}}\right)^{\frac{1}{r}} \left(\frac{1}{k^{1+\eta}}\right)^{\frac{1-\beta}{r}}\|t^{\lambda_k}\|_{L^{\frac{r}{\beta}}(\mathrm{d}\nu_{\alpha -1})} \simeq \varepsilon_k^{\frac{1-\beta}{r}}\|t^{\lambda_k}\|_{L^{\frac{r}{\beta}}(\mathrm{d}\nu_{\alpha - 1})}, 
\end{align*}
with $\sum_{k\geq 1}\varepsilon_k < \infty$. We also have 
\[
    \|f_N\|_{L^{\frac{r}{\beta}}(\mathrm{d}\nu_{\alpha -1})} \simeq \left(\sum_{k=1}^N \lambda_k^{-\alpha} |a_k|^{\frac{r}{\beta}}\right)^{\frac{\beta}{r}} \sim N^{\frac{\beta}{r}}     
\]
and 
\[
    \|Tf_N\|_{L^r(\mathrm{d}\nu_{\gamma-1})}\simeq \left(\sum_{n\geq 1}\lambda_n^{-\gamma}\left\vert\sum_{k\geq 1} a_k c_n(k)\right\vert^r\right)^{\frac{1}{r}} \simeq \left(\sum_{n=1}^N\frac{1}{n^{1+\varepsilon}}\left(\sum_{k=n}^N\frac{1}{k^{\frac{(1-\beta)(1+\eta)}{r}}}\right)^r\right)^{\frac{1}{r}}. 
\]
We lower bound: 
\[
    \|Tf_N\|_{L^r(\mathrm{d}\nu_{\gamma-1})}\gtrsim \left(\sum_{\frac{N}{3}\leq n\leq \frac{2N}{3}} \frac{1}{n^{1+\varepsilon}}\left(\sum_{k=n}^N\frac{1}{k^{\frac{(1-\beta)(1+\eta)}{r}}}\right)^r\right)^{\frac{1}{r}} \simeq N^{\beta-\frac{\varepsilon}{r}-(1-\beta)\eta}  
\]
which proves that $T$ is not of strong type $r$ as soon as we take $r>1$, and provided $\varepsilon \ll 1$, $\eta \ll 1$ and $N\gg 1$. 

\subsection{Necessity of the summability condition for a positive operator in the lacunary case}\label{sec.necessity}

We consider a fixed lacunary sequence $\Lambda = \{\lambda_k\}_{k\geq 0}$. Fix $\alpha, \beta, r$ as in \cref{thm.interpol-supercritical} and consider a sublinear positive operator $T$ of strong type $r$. Consider $\{b_k\}_{k\geq 1} \in \ell_k^{\frac{1}{\beta}}$ with positive coefficients, and define $a_k=\lambda_k^{\frac{\alpha\beta}{r}}b_k^{\frac{1}{r}}$, as well as the function  
\[
    f(t)=\sum_{k\geq 1} a_k t^{\lambda_k}. 
\]
Let $\varepsilon_k^{\frac{1-\beta}{r}} := \frac{\|T(t^{\lambda_k})\|_{L^r(\mathrm{d}\mu)}}{\|t^{\lambda_k}\|_{L^{\frac{r}{\beta}}(\mathrm{d}\nu_{\alpha - 1})}} \simeq \|T(t^{\lambda_k})\|_{L^r(\mathrm{d}\mu)} \lambda_k^{\frac{\alpha\beta}{r}}$. Therefore, we have: 
\begin{multline*}
    \sum_{k\geq 1} b_k\varepsilon_k^{1-\beta} \lesssim \sum_{k\geq 1} a_k^r\|T(t^{\lambda_k})\|_{L^r(\mathrm{d}\mu)}^r = \|\{a_kT(t^{\lambda_k})\}_{k\geq 1}\|^r_{\ell^r_kL^r(\mathrm{d}\mu)}  \\
    =\|\{a_kT(t^{\lambda_k})\}_{k\geq 1}\|^r_{L^r(\mathrm{d}\mu)\ell^r_k}   \leqslant \|\{a_kT(t^{\lambda_k})\}_{k\geq 1}\|^r_{L^r(\mathrm{d}\mu)\ell^1_k} \leqslant \|T(f)\|^r_{L^r(\mathrm{d}\mu)},
\end{multline*}
where we have used that $T$ is positive.
Since $T$ is an operator of strong type $r$ this implies 
\[
    \sum_{k\geq 1} b_k\varepsilon_k^{1-\beta} \lesssim \left(\sum_{k\geq 1} \frac{a_k^{\frac{r}{\beta}}}{\lambda_k^{\alpha}}\right)^{\beta} = \left(\sum_{k\geq 1} b_k^{\frac{1}{\beta}}\right)^{\beta},
\]
which proves by duality that $\{\varepsilon_k^{1-\beta}\}_{k\geq 1} \in \ell_k^{\frac{1}{1-\beta}}$, that is $\sum_{k\geq 1} \varepsilon _k < \infty$ as claimed. 

\bibliographystyle{alpha}
\newcommand{\etalchar}[1]{$^{#1}$}

\end{document}